\def\(#1;#2){\left\{\begin{array} {l} #1 \\ #2 \end{array}\right.}
\def\listtwo(#1;#2;#3;#4){\left\{\begin{array}{cc} #1 & #2 \\ #3 &#4
			       \end{array}\right.}
\def\listthree(#1;#2;#3;#4;#5;#6){\left\{\begin{array}{ll} #1 & #2 \\ #3 &#4
			        \\ #5 & #6 \end{array}\right.}
\def\smallmattwo(#1;#2;#3;#4){\left(\begin{smallmatrix} #1 & #2 \\ #3 &#4
			       \end{smallmatrix}\right)}
\def\ssmallmattwo(#1;#2;#3;#4){\left.\begin{smallmatrix} #1 & #2 \\ #3 &#4
			       \end{smallmatrix}\right.}
\def\mattwo(#1;#2;#3;#4){\left(\begin{array}{cc} #1 & #2 \\ #3 &#4
			       \end{array}\right)}
\def\(#1;#2){\left\{\begin{array} {l} #1 \\ #2 \end{array}\right.}
\def\vecttwo(#1;#2){\left(\begin{array}{c} #1 \\ #2
\end{array}\right)} 
\def\matthree(#1;#2;#3;#4;#5;#6;#7;#8;#9){\left(\begin{array}{ccc} 
#1 & #2 & #3 \\
 #4 & #5 &#6 \\
 #7 & #8 &#9
\end{array}\right)}
\def\smallmatthree(#1;#2;#3;#4;#5;#6;#7;#8;#9){\left(\begin{smallmatrix}
#1 & #2 & #3 \\
 #4 & #5 &#6 \\
 #7 & #8 &#9
\end{smallmatrix}\right)}
\def\vecttwo(#1;#2){\left(\begin{array}{c} #1 \\ #2
\end{array}\right)} 
\def\vectthree(#1;#2;#3){\left(\begin{array}{c} #1 \\ #2 \\ #3
\end{array}\right)}
\def\H{{\bf H}}
\def\C{{\bf C}}
\theoremstyle{plain} 
\newtheorem{thm}{Theorem}[section] 
\newtheorem{lem}[thm]{Lemma}
\newtheorem{prop}[thm]{Proposition}
\newtheorem{thms}{Theorem}[subsection] 
\newtheorem{lems}[thms]{Lemma}
\newtheorem{props}[thms]{Proposition}
\newtheorem*{xcor}{Corollary} 
\newtheorem{conj}{Conjecture} 
\newtheorem{prob}{Problem} 
\theoremstyle{definition}
\newtheorem*{xrem}{Remark}
\theoremstyle{remark} 
\newtheorem*{acknow}{Acknowledgments} 
\begin{document}
 
\title[Ikeda's conjecture]{ Ikeda's conjecture on the period  of the Duke-Imamo{\=g}lu-Ikeda lift }

\author{Hidenori KATSURADA and Hisa-aki KAWAMURA }

\date{}

\maketitle
\begin{abstract}
Let $k$ and $n$ be positive even integers. For a cuspidal Hecke eigenform $h$ in the Kohnen plus subspace of weight $k-n/2+1/2$ for $\varGamma_0(4),$ let $I_n(h)$ be the  Duke-Imamo{\=g}lu-Ikeda lift of $h$ in the space of cusp forms of weight $k$ for $Sp_n({\bf Z}),$ and $f$ the primitive form of weight $2k-n$ for $SL_2({\bf Z})$ corresponding to $h$ under the Shimura correspondence.
We then express the ratio $\displaystyle {\langle I_n(h), I_n(h) \rangle / \langle h, h \rangle} $ of the period of $I_n(h)$ to that of $h$ in terms of special values of certain $L$-functions of $f$. This proves the  conjecture proposed by Ikeda concerning the period of the Duke-Imamo{\=g}lu-Ikeda lift.
\end{abstract}
\footnote[0]{2010 {\it{Mathematics Subject Classification.}} Primary 11F67,
11F46, 11F66.}

\section{Introduction} 
One of the  fascinating problems in the theory of modular forms is to find the relation between the periods (or the Petersson products) of cuspidal  Hecke eigenforms which are related with each other through their $L$-functions.  In particular, there are several important results concerning the relation between the period of  a cuspidal  Hecke eigenform $g$ for an elliptic modular group $\varGamma \subseteq SL_2({\bf Z})$  and that of its lift $\widehat g.$  Here, by a lift of $g$ we mean a cuspidal Hecke eigenform for another modular group $\varGamma'$ (e.g. the symplectic group, the orthogonal group, the unitary group, etc.) whose certain $L$-function can be expressed in terms of certain $L$-functions related with $g.$ Thus we propose the following problem: 

\begin{prob}
Let $\langle \widehat g,  \widehat g \rangle $ (resp. $\langle  g,  g \rangle$) be the period of $\widehat g$ (resp.  $g$).
Then express the ratio 
$\displaystyle {\langle \widehat g, \widehat g \rangle / \langle  g,  g \rangle^e }$  in terms of arithmetic invariants of $g,$ for example,  the special values of certain $L$-functions  related with $g$ for some integer $e.$
\end{prob} 

For instance, Zagier \cite{Za} solved  
Problem A for the Doi-Naganuma lift $\widehat f$ of a primitive form $f$ of integral weight. Murase and Sugano \cite{M-S} also solved 
Problem A for the Kudla lift $\widehat f$ of a primitive form $f$ of integral weight. In addition, Kohnen and Skoruppa \cite{K-S} solved 
Problem A in the case where 
$\widehat h$ is the Saito-Kurokawa lift of a cuspidal Hecke eigenform $h$ in the Kohnen plus subspace of half-integral weight. 

We should also note that this type of period relation is not only interesting and important in its own right but also plays an important role in arithmetic theory of modular forms. 
For instance, by using Kohnen and Skoruppa's result, Brown \cite{Br} and Katsurada \cite{Kat2} independently proved a modification of Harder's conjecture on 
congruences occurring between Saito-Kurokawa lifts and non-Saito-Kurokawa lifts under mild conditions. Furthermore, by using such 
congruences, Brown constructed a non-trivial element of a certain Bloch-Kato Selmer group. As for this type of result, see also \cite{B-D-S}. We 
note that this type of congruence relation was conjectured by Doi, Hida and Ishii \cite{D-H-I} in the case where $\widehat f$ is the Doi-Naganuma lift of $f.$ 

Now let us explain our main result briefly. 
Let $k$ and $n$ be positive even integers.  Let $h$ be a  cuspidal Hecke eigenform in the Kohnen plus subspace of weight $k-n/2+1/2$ for $\varGamma_0(4),$  and $f$ the primitive form of weight $2k-n$ for $SL_2({\bf Z})$ corresponding to $h$ under the Shimura correspondence. Then Ikeda \cite{Ik1} constructed a cuspidal Hecke eigenform $I_n(h)$ of weight $k$ for $Sp_n({\bf Z})$ whose standard $L$-function can be expressed as 
$\zeta(s)\prod_{i=1}^n L(s+k-i,f),$ where $\zeta(s)$ is Riemann's zeta function
 and  $L(s,f)$ is Hecke's $L$-function of $f.$
The existence of such a Hecke eigenform was conjectured by Duke and Imamo{\=g}lu in their unpublished paper. 
We call $I_n(h)$ the Duke-Imamo{\=g}lu-Ikeda lift of $h$ (or of $f$). (See also the remark after Theorem 2.1.) We note that $I_2(h)$ is nothing but the Saito-Kurokawa lift of $h.$ Then, as a generalization of the result due to Kohnen and Skoruppa, 
Ikeda among others proposed the following remarkable conjecture in \cite{Ik2}: 
 
\begin{minipage}[t]{0.95\textwidth}
{\it The ratio $\displaystyle {\langle I_n(h), I_n(h) \rangle /
 \langle h, h \rangle}$  should be expressed, up to elementary factor,  as 
 $$L(k,f)\zeta(n) \prod_{i=1}^{n/2-1} L(2i+1,f,{\rm Ad})\zeta(2i),$$ where $L(s,f,{\rm Ad})$ is the adjoint $L$-function of $f$}
(cf. Conjecture A). 
\end{minipage}

\noindent
The aim of this paper is to prove the above 
conjecture (cf. Theorem 2.2). 
We note that $I_n(h)$ is not likely to be realized as a theta lift except in the case $n=2$ (cf. Schulze-Pillot \cite{Schul}). Therefore we cannot use a general method for inner product formulas of theta lifts due to Rallis \cite{Ra}. We also note that the conjecture cannot be explained within the framework of motives since there is no principle so far to associate motives with half-integral weight modular forms. Taking these remarks into account, we take an approach based on the classical Rankin-Selberg method to our problem. Namely, the method we use is to give an explicit formula of the Rankin-Selberg series of a certain half-integral weight Siegel modular form  related with $I_n(h)$, and to compute its residue at a pole. We explain it 
more precisely.

First let $\phi_{I_n(h),1}$ be the first coefficient of the Fourier-Jacobi expansion of $I_n(h)$ and 
$\sigma_{n-1}(\phi_{I_n(h),1})$ the cusp form in the generalized Kohnen plus subspace of weight $k-1/2$ 
for   $\varGamma_0^{(n-1)}(4)$ corresponding to $\phi_{I_n(h),1}$ 
 under the Ibukiyama isomorphism $\sigma_{n-1}.$ For the precise definition of the generalized Kohnen plus subspace and the Ibukiyama isomorphism, see Section 3.  Then we have the following Fourier expansion of $\sigma_{n-1}(\phi_{I_n(h),1}):$  
$$\sigma_{n-1}(\phi_{I_n(h),1})=\displaystyle \sum_A c(A) \exp(2\pi \sqrt{-1}{\rm tr}(AZ)),$$
where $A$ runs over all positive definite half-integral matrices of degree $n-1,$ and ${\rm tr}$ denotes the trace of a matrix. Then, in Section 3, we consider the following Rankin-Selberg series $R(s,\sigma_{n-1}(\phi_{I_n(h),1}))$ 
of $\sigma_{n-1}(\phi_{I_n(h),1}):$
\[
R(s,\sigma_{n-1}(\phi_{I_n(h),1}))=\sum_{A} {|c(A)|^2 \over e(A) (\det A)^s},
\]
where $A$ runs over all the $SL_{n-1}({\bf Z})$-equivalence classes of positive definite half-integral matrices of degree $n-1$ and $e(A)$ denotes the order of the unit group of $A$ in $SL_{n-1}({\bf Z}).$ 
In the integral weight Siegel modular form case, the analytic properties of this type of Dirichlet series have been studied by many people (e.g. Kalinin \cite{Kal}). In the half-integral weight Siegel modular form case, similarly to the integral weight case, we also get 
analytic properties of $R(s,\sigma_{n-1}(\phi_{I_n(h),1})).$ 
While such a 
Dirichlet series with 
no Euler product 
has never 
been regarded as significant as 
automorphic $L$-functions until now
, it should be emphasized that it plays a  
very important role in the proof of our main result. 
Indeed, as one of the most significant
properties, $R(s,\sigma_{n-1}(\phi_{I_n(h),1}))$ has a simple pole at $s=k-1/2$ with residue expressed in terms of the period of $\phi_{I_n(h),1}$ (cf. Corollary to Proposition 3.1). 
Hence, by virtue of the main identity in \cite{K-K1}, this enables us to rewrite Ikeda's conjecture in terms of 
 the relation between the residue of $R(s,\sigma_{n-1}(\phi_{I_n(h),1}))$ at $s=k-1/2$ and the period of $h$ (cf. Theorem 3.2). In order to prove Theorem 3.2, we have to get an explicit formula of  $R(s,\sigma_{n-1}(\phi_{I_n(h),1}))$  in terms of $L(s,f,{\rm Ad})$ and $L(s,f).$ To get it, in Section 4, we reduce our computation to  that of certain formal power series, which  we call formal power series of Rankin-Selberg type, associated with local Siegel series similarly to \cite{I-K2} and \cite{I-K3} (cf. Theorem 4.2). Section 5 is devoted to the computation of them. This computation is similar to those in \cite{I-K2} and \cite{I-K3}, but is more elaborate and longer than them.  In particular we should be careful in dealing with the case $p=2.$ After overcoming such obstacles we can get  explicit formulas of formal power series of Rankin-Selberg type (cf. Theorem 5.3.1). In Section 6, by using Theorem 5.3.1, we immediately get an explicit formula of $R(s,\sigma_{n-1}(\phi_{I_n(h),1}))$ (cf. Theorem 6.2), and by taking the residue of it at $k-1/2$ we prove Theorem 3.2, and therefore prove Conjecture A (cf. Theorem 6.3).  

We note that we can also give an explicit formula of the Rankin-Selberg series of $I_n(h).$ However, it seems difficult to prove Conjecture A directly from such a formula. \vspace*{1mm}

By Theorem 2.2,  we can give a refinement of a result concerning the algebraicity of $\displaystyle { \langle f,  f \rangle^{n/2}/ \langle I_n(h), I_n(h) \rangle}$ due to Choie and Kohnen (cf. Theorem 2.3). Moreover we can apply this result  to characterize prime ideals giving congruences between Duke-Imamo{\=g}lu-Ikeda lifts and non-Duke-Imamo{\=g}lu-Ikeda lifts. This will be discussed in \cite{Kat3}. \vspace*{2mm}

\begin{acknow}
The authors thank   T. Ikeda, Y. Ishikawa, Y. Mizuno and  S. Yamana for their valuable comments.
 The first named author was partly supported by JSPS KAKENHI  Grant Number 24540005, JSPS, and the second named author was partly supported by the JSPS International Training Program (ITP).\vspace*{2mm}
\end{acknow}

\noindent
{\bf Notation.}  
Let $R$ be a commutative ring. We denote by $R^{\times}$ and $R^*$  the semigroup of non-zero elements of $R$ and the unit group of $R,$  respectively. We also put $S^{\Box}=\{a^2 \ | \ a \in S \}$ for a subset $S$ of  $R.$ We denote by $M_{mn}(R)$ the set of
$m \times n$-matrices with entries in $R.$ In particular put $M_n(R)=M_{nn}(R).$   Put $GL_m(R) = \{A \in M_m(R) \ | \ \det A \in R^* \},$ where $\det
A$ denotes the determinant of a square matrix $A$. For an $m \times n$-matrix $X$ and an $m \times m$-matrix
$A$, we write $A[X] = {}^t X A X,$ where $^t X$ denotes the
transpose of $X$. Let $S_n(R)$ denote
the set of symmetric matrices of degree $n$ with entries in
$R.$ Furthermore, if $R$ is an integral domain of characteristic different from $2,$ let  ${\mathcal L}_n(R)$ denote the set of half-integral matrices of degree $n$ over $R$, that is, ${\mathcal L}_n(R)$ is the subset of symmetric
matrices of degree $n$ with entries in the field of fractions of $R$ whose $(i,j)$-component belongs to
$R$ or ${1 \over 2}R$ according as $i=j$ or not.  
In particular, we put ${\mathcal L}_n={\mathcal L}_n({\bf Z})$, and ${\mathcal L}_{n,p}={\mathcal L}_n({\bf Z}_p)$ for a prime number $p.$ 
  For a subset $S$ of $M_n(R)$ we denote by $S^{\times}$ the subset of $S$
consisting of non-degenerate matrices. If $S$ is a subset of $S_n({\bf R})$ with ${\bf R}$ the field of real numbers, we denote by $S_{>0}$ (resp. $S_{\ge 0}$) the subset of $S$
consisting of positive definite (resp. semi-positive definite) matrices. 
$GL_n(R)$ acts on the set $S_n(R)$ in the following way:
\[
GL_n(R) \times S_n(R) \ni (g,A) \longmapsto {}^tg Ag \in S_n(R).
\]
Let $G$ be a subgroup of $GL_n(R).$ For a $G$-stable subset ${\mathcal B}$ of $S_n(R)$  we denote by ${\mathcal B}/G$ the set of equivalence classes of ${\mathcal B}$ under the action of  $G.$ We sometimes identify ${\mathcal B}/G$ with a complete set of representatives of ${\mathcal B}/G.$ We abbreviate ${\mathcal B}/GL_n(R)$ as ${\mathcal B}/\sim$ if there is no fear of confusion. Let $R'$ be a subring of R. Then two symmetric matrices $A$ and $A'$ with
entries in $R$ are said to be equivalent over $R'$ with each
other and write $A \sim_{R'} A'$ if there is
an element $X$ of $GL_n(R')$ such that $A'=A[X].$ We also write $A \sim A'$ if there is no fear of confusion. 
For square matrices $X$ and $Y$ we write $X \bot Y = \mattwo(X;O;O;Y).$

For an integer $D \in {\bf Z}$ such that $D \equiv 0$ or $\equiv 1 \ {\rm mod} \ 4,$ let ${\textfrak d}_D$ be the discriminant of ${\bf Q}(\sqrt{D}),$ and put ${\textfrak f}_D= \sqrt{{ D \over {\textfrak d}_D}}.$ We call an integer $D$ a fundamental discriminant if it is the discriminant of some quadratic extension of ${\bf Q}$ or $1.$ For a fundamental discriminant $D,$ let $\left({\displaystyle
D \over \displaystyle * }\right)$ be the character corresponding to ${\bf Q}(\sqrt{D})/{\bf Q}.$ Here we make the convention that  $\left({\displaystyle
 D  \over \displaystyle * } \right)=1$ if $D=1.$ 

We put ${\bf e}(x)=\exp(2 \pi \sqrt{-1} x)$ for $x \in {\bf C}.$ For a prime number $p$ we denote by $\nu_p(*)$ the additive valuation of ${\bf Q}_p$ normalized so that $\nu_p(p)=1,$ and by ${\bf e}_p(*)$ the continuous additive character of ${\bf Q}_p$ such that ${\bf e}_p(x)= {\bf e}(x)$ for $x \in {\bf Q}.$

For a non-negative integer $r$ we define a polynomial $\phi_r(x)$ in $x$ by $\phi_r(x)=\prod_{i=1}^r (1-x^i).$ Here we understand that $\phi_0(x)=1.$ 

\section{Ikeda's conjecture on the Period of the Duke-Imamo{\=g}lu-Ikeda lift}

 Put $J_n=\mattwo(O_n;-1_n;1_n;O_n),$ where $1_n$ and $O_n$ denotes the unit matrix and the zero matrix of degree $n$, respectively. 
 Furthermore, put 
$$\varGamma^{(n)}=Sp_n({\bf Z})=\{M \in GL_{2n}({\bf Z})   \ | \  J_n[M]=J_n \}. $$
  Let ${\bf H}_n$ be Siegel's
upper half-space of degree $n$.  Let $l$ be an integer or half integer. For a congruence subgroup $\varGamma$ of $\varGamma^{(n)},$ we denote by ${\textfrak M}_{l}(\varGamma)$  the space of holomorphic modular forms of weight $l$  for  $\varGamma.$   We denote by ${\textfrak S}_{l}(\varGamma)$ the subspace of ${\textfrak M}_{l}(\varGamma)$ consisting of cusp forms. 
 For two holomorphic cusp forms $F$ and $G$ of weight $l$ for  $\varGamma$ we define the Petersson  product $\langle F,G \rangle$ by 
$$\langle F,G \rangle=[\varGamma^{(n)}:\varGamma \{\pm 1_{2n}\}]^{-1}\int_{\varGamma \backslash {\bf H}_n} F(Z)\overline {G(Z)} \det ({\rm Im}(Z))^l d^*Z,$$
where $d^*Z$ denotes the invariant volume element on ${\bf H}_n$ defined as usual.
 We call $\langle F, F \rangle$ the period of $F.$ 
  For a positive integer $N,$ let 
 $$\varGamma_0^{(m)}(N)=\left\{\left. \mattwo(A;B;C;D) \in \varGamma^{(m)} \ \right| \ C \equiv O_m \ {\rm mod} \ N \right\},$$
  and in particular put $\varGamma_0(N)=\varGamma_0^{(1)}(N).$  
Let $p$ be a prime number.  For a non-zero element $a \in {\bf Q}_p$ we put $\chi_p(a)=1,-1,$ or
 $0$ according as ${\bf Q}_p(a^{1/2})={\bf Q}_p, {\bf Q}_p(a^{1/2})$ is
 an unramified quadratic extension of ${\bf Q}_p,$ or ${\bf Q}_p(a^{1/2})$
 is a  ramified quadratic extension of ${\bf Q}_p.$
 We note that  $\chi_p(D)=\left({\displaystyle D \over \displaystyle p }\right)$ if $D$ is a fundamental discriminant.
 For an element  $T$ of ${\mathcal L}_{n,p}^{\times}$ with $n$ even, put
 $\xi_p(T)=\chi_p((-1)^{n/2} \det T).$ 
  Let $T$ be an element of ${\mathcal L}_n^{\times}.$  Then  $(-1)^{n/2} \det (2T) \equiv 0 $ or $\equiv 1 \ {\rm mod} \ 4,$ and  we define ${\textfrak d}_T$ and ${\textfrak f}_T$ as ${\textfrak d}_T ={\textfrak d}_{(-1)^{n/2}\det (2T)}$ and ${\textfrak f}_T ={\textfrak  f}_{(-1)^{n/2}\det (2T)},$ respectively. 
 For an element $T$ of ${\mathcal L}_{n,p}^{\times},$ there exists an element $\widetilde T$ of ${\mathcal L}_n^{\times}$ such that $\widetilde T \sim_{{\bf Z}_p} T.$ We then put ${\textfrak e}_p(T)= \nu_p({\textfrak f}_{\widetilde T}),$ and $[{\textfrak d}_T]={\textfrak d}_{\widetilde T} \ {\rm mod} \ {{\bf Z}_p^*}^{\Box}.$ They do not depend on the choice of $\widetilde T.$  We note that $(-1)^{n/2} \det (2T)$ can be expressed as $(-1)^{n/2} \det (2T) =dp^{2{\textfrak e}_p(T)} \ {\rm mod} \ {{\bf Z}_p^*}^{\Box}$ for any $d \in [{\textfrak d}_T].$ 

For each $T \in {\mathcal L}_{n,p}^{\times}$ we define the local Siegel series $b_p(T,s)$ 
 by 
  $$b_p(T,s)=\sum_{R \in S_n({\bf Q}_p)/S_n({\bf Z}_p)} {\bf e}_p({\rm tr}(TR))p^{-\nu_p(\mu_p(R))s},$$  
 where $\mu_p(R)=[R{\bf Z}_p^n+{\bf Z}_p^n:{\bf Z}_p^n].$ 
 We remark that there exists a unique polynomial 
 $F_p(T,X)$ in $X$ such that 
 $$b_p(T,s)=F_p(T,p^{-s}){(1-p^{-s})\prod_{i=1}^{n/2} (1-p^{2i-2s}) \over 1-\xi_p(T)p^{n/2-s}}$$ 
(cf. Kitaoka \cite{Ki1}). 
We then define a polynomial $\widetilde F_p(T,X)$ in $X$ and $X^{-1}$ as
$$\widetilde F_p(B,X)=X^{-{\textfrak e}_p(T)}F_p(T,p^{-(n+1)/2}X).$$
We remark that  $\widetilde{F}_p(B,X^{-1})=\widetilde{F}_p(B,X)$ (cf. \cite{Kat1}). \vspace*{2mm}

Now let $k$ be a positive even integer. Let 
 $$h(z)=\sum_{m \in {\bf Z}_{>0} \atop (-1)^{n/2}m \equiv 0, 1 \ {\rm mod} \ 4 }c_h(m){\bf e}(mz)$$
  be a Hecke eigenform in the Kohnen plus subspace ${\textfrak S}_{k-n/2+1/2}^+(\varGamma_0(4))$ and 
$$f(z)=\sum_{m=1}^{\infty}c_f(m){\bf e}(mz)$$
 the   primitive form in ${\textfrak S}_{2k-n}(\varGamma^{(1)})$ corresponding to $h$ under the  Shimura correspondence (cf. Kohnen,  \cite{Ko}).  For the precise definition of the Kohnen plus subspace, we give it in Section 3 in more general setting.   Let $\alpha_p \in {\bf C}$ such that $\alpha_p+\alpha_p^{-1}=p^{-k+n/2+1/2}c_f(p),$ which we call the Satake $p$-parameter of $f$. Then for a Dirichlet character $\chi$ we  define Hecke's $L$-function $L(s,f,\chi)$ twisted by $\chi$ as 
  $$L(s,f,\chi)=\prod_p \{(1-\alpha_p p^{-s+k-n/2-1/2}\chi(p))(1-\alpha_p^{-1} p^{-s+k-n/2-1/2} \chi(p))\}^{-1}.$$
 In particular, if $\chi$ is the principal character we write $L(s,f,\chi)$ as $L(s,f)$ as usual. 
We define a Fourier series $I_n(h)(Z)$ in $Z \in {\bf H}_n$ by
$$I_n(h)(Z)= \sum_{T \in ({\mathcal L}_n)_{> 0}} c_{I_n(h)}(T){\bf e}({\rm tr}(TZ)),$$
 where
 $$c_{I_n(h)}(T)=c_h(|{\textfrak d}_T|) {\textfrak f}_T^{k-n/2-1/2} \prod_p\widetilde F_p(T,\alpha_p).$$ Then Ikeda \cite{Ik1} showed  the following: 
 
 \begin{thm}
$I_n(h)(Z)$ is a  Hecke eigenform  in ${\textfrak S}_k(\varGamma^{(n)})$ whose 
standard $L$-function coincides with   
$$\zeta(s)\prod_{i=1}^n L(s+k-i,f).$$
\end{thm}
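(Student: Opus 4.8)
We outline the proof, due to Ikeda \cite{Ik1}. The plan is to establish four points: (i) the Fourier series defining $I_n(h)$ converges to a holomorphic function on ${\bf H}_n$ whose coefficient $a_{I_n(h)}(T)$ depends only on the $GL_n({\bf Z})$-equivalence class of $T$; (ii) $I_n(h)$ transforms as a modular form of weight $k$ for $\varGamma^{(n)}$; (iii) $I_n(h)$ is a cusp form; (iv) $I_n(h)$ is a simultaneous Hecke eigenform whose standard $L$-function equals $\zeta(s)\prod_{i=1}^{n}L(s+k-i,f)$. Among these, (ii) --- and within it the invariance under $Z\mapsto -Z^{-1}$ --- is by far the hardest and forms the technical core of \cite{Ik1}.

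For (i): the quantities $c(|{\textfrak d}_T|)$ and ${\textfrak f}_T$ are invariants of the $GL_n({\bf Z})$-class of $T$, and by Katsurada's explicit formula \cite{Kat1} the polynomial $\widetilde F_p(T,X)$ depends only on the $GL_n({\bf Z}_p)$-equivalence class of $T$ (indeed only on its Gross--Keating invariants over ${\bf Z}_p$); moreover $\widetilde F_p(T,X)=1$ whenever $p\nmid{\textfrak f}_T$, so $\prod_p\widetilde F_p(T,\alpha_p)$ is a finite product and $a_{I_n(h)}(T)$ has the asserted $GL_n({\bf Z})$-invariance. Since $k$ is even, this already yields the correct transformation law under the subgroup of $\varGamma^{(n)}$ generated by the translations $Z\mapsto Z+B$ $(B\in S_n({\bf Z}))$ and the $GL_n({\bf Z})$-substitutions. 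Absolute convergence on ${\bf H}_n$ (hence holomorphy) is then a routine estimate: the Ramanujan bound $|\alpha_p|=1$, together with a bound on the coefficients of $\widetilde F_p(T,X)$ in terms of ${\textfrak e}_p(T)$, gives $\prod_p|\widetilde F_p(T,\alpha_p)|\ll{\textfrak f}_T^{c_n}$ for a constant $c_n$ depending only on $n$; combining this with the factor ${\textfrak f}_T^{k-n/2-1/2}$, the relation $|{\textfrak d}_T|{\textfrak f}_T^{2}=|\det(2T)|$, and the standard polynomial bound for the Fourier coefficients $c(m)$ of the half-integral weight \emph{cusp} form $h$ yields a polynomial bound for $a_{I_n(h)}(T)$ in $\det T$. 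Granting (ii), step (iii) is immediate, since only positive definite $T$ appear in the expansion: thus $\Phi(I_n(h))=0$ for the Siegel $\Phi$-operator, and $I_n(h)\in{\textfrak S}_k(\varGamma^{(n)})$.

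The decisive point is the invariance $I_n(h)(-Z^{-1})=\det(Z)^{k}I_n(h)(Z)$; combined with the parabolic invariance from (i) it gives (ii), because $\varGamma^{(n)}$ is generated by the translations, the $GL_n({\bf Z})$-substitutions, and $J_n$. The approach here, following Ikeda, is to pass to the Fourier--Jacobi expansion $I_n(h)=\sum_{m\ge1}\phi_m\,{\bf e}(m\omega)$ along the last variable, to show that each $\phi_m$ is a Jacobi cusp form of weight $k$ and index $m$ on ${\bf H}_{n-1}\times{\bf C}^{n-1}$, and to verify that the family $\{\phi_m\}_{m\ge1}$ satisfies the generalized Maass relations that force $\sum_m\phi_m\,{\bf e}(m\omega)$ to be a genuine Siegel modular form; equivalently, one checks directly that the $a_{I_n(h)}(T)$ obey these relations, by comparing them with the Fourier coefficients of suitable Siegel--Eisenstein series and invoking Katsurada's explicit formula for $b_p(T,s)$ together with the functional equation $\widetilde F_p(T,X^{-1})=\widetilde F_p(T,X)$, which supplies precisely the $\alpha_p\leftrightarrow\alpha_p^{-1}$ symmetry one needs. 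This is where the main obstacle lies: establishing the Jacobi-modularity of the $\phi_m$ (equivalently, the required identities among local Siegel series) demands a delicate analysis of local densities, with the prime $p=2$ requiring particular care.

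Finally, for (iv): once $I_n(h)\in{\textfrak S}_k(\varGamma^{(n)})$ is known, one computes the action of the local Hecke algebra at each prime $p$ directly on the Fourier expansion, using the recursions expressing $F_p(T,X)$ through the $F_p(T[D^{-1}],X)$ for $D\in{\mathcal D}_{n,i}$ --- the same relations that enter the definitions of $b_p^*(T,s)$ and $G_p(T,X)$. This shows that $I_n(h)$ is a simultaneous eigenform and identifies its Satake $p$-parameters as $\{1\}\cup\{\alpha_p p^{i-n/2-1/2}:1\le i\le n\}$ together with their inverses. Substituting these into the Euler product defining the standard $L$-function, and recalling $\alpha_p+\alpha_p^{-1}=p^{-k+n/2+1/2}a(p)$ --- so that the Satake parameters of the primitive form $f$ of weight $2k-n$ are $p^{k-n/2-1/2}\alpha_p^{\pm1}$ --- one obtains the local Euler factor
\[
\{(1-p^{-s})\prod_{i=1}^{n}(1-\alpha_p p^{i-n/2-1/2-s})(1-\alpha_p^{-1}p^{i-n/2-1/2-s})\}^{-1},
\]
that is, $\zeta(s)\prod_{i=1}^{n}L(s+k-i,f)$, as claimed. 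The $\xi_p(T)$-normalization built into $\widetilde F_p$ and its functional equation are exactly what make this last computation come out cleanly.
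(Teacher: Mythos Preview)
The paper does not prove this theorem at all: it is stated as a quotation of Ikeda's result, introduced by ``Then Ikeda \cite{Ik1} showed the following'' and left without any argument. So there is no proof in the paper to compare against; the correct ``proof'' here is simply a citation of \cite{Ik1}.

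Your outline is a fair high-level sketch of Ikeda's original argument and is broadly accurate as such (the $GL_n({\bf Z})$-invariance of $a_{I_n(h)}(T)$, the reduction of modularity to invariance under $J_n$, cuspidality from the support of the Fourier expansion, and the identification of the Satake parameters as $\beta_{p,i}=\alpha_p\,p^{\,i-n/2-1/2}$ leading to the stated $L$-function). A couple of caveats if you keep this as an exposition: Ikeda's actual route to $J_n$-invariance in \cite{Ik1} does not proceed via ``generalized Maass relations'' in the sense you describe; rather, he compares the Fourier--Jacobi coefficients of $I_n(h)$ with those of a specific Siegel Eisenstein series and exploits the functional equation of the local Siegel series (equivalently, $\widetilde F_p(T,X^{-1})=\widetilde F_p(T,X)$) together with a theta-decomposition argument. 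Also, invoking the Ramanujan bound $|\alpha_p|=1$ is unnecessary (and historically anachronistic for convergence purposes); Deligne's bound is of course available, but a crude Hecke estimate already suffices for the polynomial growth of $a_{I_n(h)}(T)$. In any case, for the purposes of matching the paper, a one-line reference to \cite{Ik1} is what is called for.
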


\begin{xrem}  We call $I_n(h)$ the Duke-Imamo{\=g}lu-Ikeda lift of $h$ (or of $f$) as in Section 1. We note that $I_n(h)$ is uniquely determined by $h.$ 
 We also note that $I_2(h)$ coincides with the Saito-Kurokawa lift of $h.$
 Originally, starting from a primitive form $g$ in ${\textfrak S}_{2k-n}(\varGamma^{(1)}),$ Ikeda constructed the $I_n(\widetilde g),$ where $\widetilde g$ is a Hecke eigenform in ${\textfrak S}_{k-n/2+1/2}^+(\varGamma_0(4))$ corresponding to $g$ under the Shimura correspondence. We note that $\widetilde g$ is uniquely determined, only up to constant multiple, by $g$, and therefore so is $I_n(\widetilde g).$
 \end{xrem}

To formulate Ikeda's conjecture, put 
\begin{center}
$\Gamma_{{\bf R}}(s)=\pi^{-s/2}\Gamma(s/2)$\hspace{2mm} and \hspace{2mm}$\Gamma_{{\bf C}}(s)=\Gamma_{{\bf R}}(s)\Gamma_{{\bf R}}(s+1)$. 
\end{center}
We note that
$\Gamma_{{\bf C}}(s)=2(2\pi)^{-s} \Gamma(s)$.
Furthermore put
\begin{center}
$\xi(s)= \Gamma_{{\bf R}}(s)\zeta(s)$\hspace{2mm} and \hspace{2mm}$\widetilde{\xi}(s) = \Gamma_{{\bf C}}(s) \zeta(s)$. 
\end{center}
For a Dirichlet character $\chi$ put 
$$\Lambda(s,\, f,\chi)= {\Gamma_{{\bf C}}(s) L(s,\, f,\chi) \over \tau(\chi)},$$where $\tau(\chi)$ is the Gauss sum of $\chi.$
In particular, we simply write $\Lambda(s,\, f,\chi)$ as $\Lambda(s,\, f)$ if $\chi$ is the principal character. 
 Furthermore,  we define the adjoint $L$-function $L(s,\, f,\, {\rm Ad})$  as 
$$L(s,f,{\rm Ad})=\prod_p \{(1-\alpha_p^2 p^{-s})(1-\alpha_p^{-2} p^{-s})(1-p^{-s}) \}^{-1},$$
 and put 
 $${\Lambda}(s,\, f,\, {\rm Ad})= \Gamma_{{\bf R}}(s+1)\Gamma_{{\bf C}}(s+2k-n-1)L(s,\, f,\, {\rm Ad}),$$
and
$$\widetilde{\Lambda}(s,\, f,\, {\rm Ad})= \Gamma_{{\bf C}}(s)\Gamma_{{\bf C}}(s+2k-n-1)L(s,\, f,\, {\rm Ad}).$$
We note that 
$${\Lambda}(1-s,\, f,\, {\rm Ad})={\Lambda}(s,\, f,\, {\rm Ad}).$$
Then Ikeda \cite{Ik2} among others proposed the following conjecture:
 
 
\begin{conj}
We have  
$$\frac{ \langle I_n(h),\, I_n(h) \rangle}{ \langle h,\, h \rangle }=2^{\alpha(n,\,k)} \Lambda(k,\, f) \widetilde{\xi}(n) \displaystyle\prod_{i=1}^{n/2-1}\widetilde \Lambda(2i+1,\, f,\, {\rm Ad})\, \widetilde{\xi}(2i),$$
where $\alpha(n,k)=-(n-3)(k-n/2)-n+1.$ \vspace*{2mm}
\end{conj}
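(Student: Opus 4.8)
The plan is to translate the identity for $\langle I_n(h),I_n(h)\rangle/\langle h,h\rangle$ into a statement about the Rankin--Selberg Dirichlet series $R(s,\sigma_{n-1}(\phi_{I_n(h),1}))$ attached to the first Fourier--Jacobi coefficient of $I_n(h)$, and then to evaluate that Dirichlet series explicitly. Concretely, I would proceed in the following steps. First, take the Fourier--Jacobi expansion $I_n(h)=\sum_{m\ge 1}\phi_{I_n(h),m}$, isolate $\phi_{I_n(h),1}$, and pass through the Ibukiyama isomorphism $\sigma_{n-1}$ to the cusp form $g:=\sigma_{n-1}(\phi_{I_n(h),1})$ of weight $k-1/2$ in the generalized Kohnen plus space for $\varGamma_0^{(n-1)}(4)$. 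Using the main identity of \cite{K-K1}, relating $\langle I_n(h),I_n(h)\rangle$ to $\langle \phi_{I_n(h),1},\phi_{I_n(h),1}\rangle$ (equivalently $\langle g,g\rangle$) up to explicit elementary and $L$-value factors, one reduces Conjecture A to an identity expressing $\langle g,g\rangle/\langle h,h\rangle$ — and hence, via an unfolding/Rankin--Selberg argument, the residue of $R(s,g)$ at $s=k-1/2$ — in terms of $\Lambda(k,f)$, $\widetilde\Lambda(2i+1,f,\mathrm{Ad})$ and the $\widetilde\xi$ factors. This is exactly the content of Theorem 3.2 as announced in the introduction.

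The core analytic input is the meromorphic continuation of $R(s,g)$ and the computation of its residue at $s=k-1/2$. For this I would attach to $g$ a half-integral weight Siegel--Eisenstein series of degree $n-1$, form the Rankin--Selberg integral $\int_{\varGamma_0^{(n-1)}(4)\backslash\mathbf H_{n-1}} g\,\overline{g}\,E(Z,s)\,\det(\mathrm{Im}\,Z)^{k-1/2}d^*Z$, and unfold it to recover $R(s,g)$ times archimedean gamma factors (this is the analogue in the half-integral weight case of Kalinin's \cite{Kal} work, cited in the introduction). The pole of the Eisenstein series at the relevant point produces the simple pole of $R(s,g)$ at $s=k-1/2$ with residue proportional to $\langle g,g\rangle$ (Corollary to Proposition 3.1). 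Thus everything hinges on an \emph{explicit} formula for $R(s,g)$. Here the Fourier coefficients $c(A)$ of $g$ are, by Ikeda's construction (Theorem 2.1), essentially products $c(|\mathfrak d_A|)\,\mathfrak f_A^{\,k-n/2-1/2}\prod_p \widetilde F_p(A,\alpha_p)$ of the coefficients of $h$ and of polynomials $\widetilde F_p(A,X)$ coming from local Siegel series. Substituting, and using the Shimura correspondence to rewrite $|c(|\mathfrak d_A|)|^2$ in terms of $L$-values of $f$, the series $R(s,g)$ factors (after summing over $A$ with fixed discriminant) as a product of an explicit ``global'' part built from $L(s,f)$, $L(s,f,\mathrm{Ad})$, $\zeta$, and a product over $p$ of \emph{formal power series of Rankin--Selberg type}
$$
\sum_{B}\frac{|\widetilde F_p(B,\alpha_p)|^2}{(\text{local density / mass factor})}\,t^{\,2\,\mathfrak e_p(B)},
$$
the objects treated in \cite{I-K2}, \cite{I-K3}. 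Theorem 4.2 effects this reduction; Section 5 evaluates the local power series in closed form (Theorem 5.5.1); Section 6 reassembles them into the explicit formula for $R(s,g)$ (Theorem 6.2) and reads off the residue.

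I expect the main obstacle to be precisely the explicit evaluation of the local formal power series of Rankin--Selberg type, and in particular the case $p=2$. Away from $p=2$ the $\widetilde F_p(B,X)$ and the relevant local densities have the clean recursive structure exploited in \cite{I-K2}, \cite{I-K3}, and one can run an induction on $\mathfrak e_p(B)$ (or on the $\mathfrak p$-adic structure of $B$) to sum the series against a geometric-type generating variable; the bookkeeping is long but mechanical. At $p=2$, however, the classification of half-integral quadratic forms over $\mathbf Z_2$ is more delicate (extra dyadic types, the $\xi_2$-factor, ramified behavior), so both the polynomials $\widetilde F_2(B,X)$ and the $2$-adic local densities require a finer case analysis, and the closed form in Theorem 5.5.1 must be checked to survive uniformly at $2$. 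Once that is in hand, matching the resulting explicit formula for the residue against the right-hand side of Conjecture A — tracking the archimedean $\Gamma_{\mathbf C}$, $\Gamma_{\mathbf R}$ factors, the power of $2$ recorded in $\alpha(n,k)$, and the functional-equation normalizations of $\Lambda(s,f)$ and $\widetilde\Lambda(s,f,\mathrm{Ad})$ — is a finite, if intricate, comparison, completing the proof of Theorem 2.2 and hence of Conjecture A.
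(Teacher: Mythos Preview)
Your proposal is correct and follows essentially the same route as the paper. Two small technical corrections: the Rankin--Selberg unfolding in Proposition~3.1 pairs the \emph{integral-weight} Eisenstein series $E^{(n-1)}(Z,s)$ on $\varGamma^{(n-1)}$ against the vector-valued components $h_r$ of $\phi_{I_n(h),1}$ (not a half-integral weight Eisenstein series on $\varGamma_0^{(n-1)}(4)$), and the Kohnen--Zagier input is applied only at the final step to the residue of ${\mathcal R}(s,h)$, not to the individual $|c(|d_0|)|^2$ inside the expansion of $R(s,g)$---Theorem~6.2 first expresses $R(s,\sigma_{n-1}(\phi_{I_n(h),1}))$ in terms of $R(s,h)$ itself.
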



\begin{xrem}
The primitive form $f$ as well as $I_n(h)$ is uniquely determined by $ h.$ Therefore there is no ambiguity in the above formulation. 
Conjecture A is compatible with the period formula for the Saito-Kurokawa lift proved  by Kohnen and  Skoruppa \cite{K-S} (see also Oda \cite{O}). 
In \cite{Ik2}, Ikeda proposed a more general conjecture for the period of the Ikeda-Miyawaki lift. We also remark that he constructed a lifting from   an elliptic modular form to the space of Hermitian modular forms, and proposed a conjecture similar to the above (cf. \cite{Ik3}).   
\end{xrem}

Now our main result in this paper is the following:

\begin{thm}
Conjecture A holds true for any positive even integer $n.$
\end{thm}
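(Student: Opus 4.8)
The plan is to carry out the Rankin--Selberg argument outlined in Section 1, with the Dirichlet series $R(s,\sigma_{n-1}(\phi_{I_n(h),1}))$ as the central object, in four stages. The first stage is a reduction to a residue identity. Starting from $I_n(h)$ one passes to its first Fourier--Jacobi coefficient $\phi_{I_n(h),1}$ and then, via the Ibukiyama isomorphism $\sigma_{n-1}$, to the cusp form $\sigma_{n-1}(\phi_{I_n(h),1})$ of weight $k-1/2$ for $\varGamma_0^{(n-1)}(4)$ in the generalized Kohnen plus space. By Proposition 3.1 and its Corollary, $\mathrm{Res}_{s=k-1/2}R(s,\sigma_{n-1}(\phi_{I_n(h),1}))$ equals, up to an explicit elementary constant, the period $\langle\phi_{I_n(h),1},\phi_{I_n(h),1}\rangle$, which is in turn proportional to $\langle I_n(h),I_n(h)\rangle$; meanwhile the main identity of \cite{K-K1} relates $\langle h,h\rangle$ — equivalently the Fourier coefficients $c(|{\textfrak d}_T|)$ of $h$ — to $\langle f,f\rangle$ and to central values of twisted $L$-functions of $f$, in the Kohnen--Zagier style. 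Combining these, Conjecture A is reduced to Theorem 3.2, an identity of the form $\mathrm{Res}_{s=k-1/2}R(s,\sigma_{n-1}(\phi_{I_n(h),1})) = c\,\langle h,h\rangle\,\Lambda(k,f)\widetilde\xi(n)\prod_{i=1}^{n/2-1}\widetilde\Lambda(2i+1,f,{\rm Ad})\widetilde\xi(2i)$ with $c$ an explicit elementary constant.

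To prove Theorem 3.2 I would not chase the residue directly but instead compute $R(s,\sigma_{n-1}(\phi_{I_n(h),1}))$ in closed form. Its Fourier coefficients have the same shape as the $a_{I_n(h)}(T)$, namely a product of a coefficient $c(|{\textfrak d}_A|)$ of $h$, a power of ${\textfrak f}_A$, and $\prod_p\widetilde F_p(A,\alpha_p)$. Grouping the half-integral matrices $A$ of degree $n-1$ according to their discriminant class $[{\textfrak d}_A]$ and summing over the conductor, the sum $\sum_A|c(A)|^2/(e(A)(\det A)^s)$ factors into a ``discriminant part'', a Dirichlet series in the $c(m)$'s which is evaluated by the Kohnen--Zagier formula in terms of $L$-values of $f$, times an Euler product whose $p$-factor is a sum of $|\widetilde F_p(\,\cdot\,,\alpha_p)|^2$ over the $GL_{n-1}({\bf Z}_p)$-classes of a fixed discriminant type, weighted by local densities and powers of $p^{-s}$. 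These $p$-factors are the formal power series of Rankin--Selberg type attached to the local Siegel series, and this factorization is the content of Theorem 4.2.

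The heart of the proof, and its main obstacle, is the explicit evaluation of these local formal power series (Theorem 5.5.1). The tools are the functional equation $\widetilde F_p(B,X^{-1})=\widetilde F_p(B,X)$, the reduction formulas for $F_p(T,X)$ and $G_p(T,X)$ under $T\mapsto T[D^{-1}]$, and an induction over the Jordan decomposition of $T$ over ${\bf Z}_p$; the computation runs parallel to \cite{I-K2} and \cite{I-K3} but is considerably longer and more delicate, and the prime $p=2$ must be treated separately because of the finer invariants of quadratic forms over ${\bf Z}_2$. What makes this step hard is that the answers must come out as products of Euler factors of $L(s,f)$ and $L(s,f,{\rm Ad})$, with the precise dependence on $\xi_p(T)$ and ${\textfrak e}_p(T)$, and matching this at $p=2$ is where the difficulty is concentrated.

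Finally, substituting the closed forms of Theorem 5.5.1 into the Euler product of Theorem 4.2 and combining with the discriminant part yields the explicit formula for $R(s,\sigma_{n-1}(\phi_{I_n(h),1}))$ in terms of $L(s,f)$ and $L(s,f,{\rm Ad})$ (Theorem 6.2). From this formula the simple pole at $s=k-1/2$ is visible as the pole of a single $\zeta$- or $L$-factor, and taking the residue leaves precisely the product $\Lambda(k,f)\widetilde\xi(n)\prod_{i=1}^{n/2-1}\widetilde\Lambda(2i+1,f,{\rm Ad})\widetilde\xi(2i)$ against $\langle h,h\rangle$, up to the elementary $2$-power recorded in $\alpha(n,k)$; this gives Theorem 3.2, hence Conjecture A (Theorem 6.3). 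Apart from the $p=2$ local analysis, the remaining work is bookkeeping: tracking the $\Gamma$-factor normalizations in $\Lambda$, $\widetilde\Lambda$, $\widetilde\xi$ and the various powers of $2$ so that $\alpha(n,k)=-(n-3)(k-n/2)-n+1$ emerges as stated.
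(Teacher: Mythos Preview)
Your overall architecture matches the paper's, but two key steps are misdescribed in ways that matter. First, the result of \cite{K-K1} invoked in the reduction is $\langle I_n(h),I_n(h)\rangle/\langle \phi_{I_n(h),1},\phi_{I_n(h),1}\rangle=2^{-k+n-1}\Lambda(k,f)\widetilde\xi(n)$; it relates the period of the lift to that of its first Fourier--Jacobi coefficient, not $\langle h,h\rangle$ to $\langle f,f\rangle$ or to twisted $L$-values. Accordingly Theorem 3.2 contains no $\Lambda(k,f)\widetilde\xi(n)$: that factor is supplied entirely by \cite{K-K1} in the reduction (Theorem 3.3), and the residue identity to be proved is
\[
{\rm Res}_{s=k-1/2}\,{\mathcal R}(s,\sigma_{n-1}(\phi_{I_n(h),1}))=2^{\beta(n,k)}\langle h,h\rangle\prod_{i=1}^{n/2-1}\widetilde\xi(2i)\,\xi(2i+1)\,\widetilde\Lambda(2i+1,f,{\rm Ad}).
\]

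Second, and more seriously, the passage from Theorems 4.2 and 5.5.1 to the residue is not as you describe. The factorization in Theorem 4.2 already has two terms, one for $\omega_p=\iota_p$ and one for $\omega_p=\varepsilon_p$ (this split, coming from $SL$-- versus $GL$--equivalence and the Hasse invariant, is essential). After inserting the explicit local formulas of Theorem 5.5.1, the sum over fundamental discriminants $d_0$ with weight $|c(|d_0|)|^2$ is \emph{not} evaluated term by term via Kohnen--Zagier. Instead the point (Proposition 6.1 and Theorem 6.2) is that what remains is recognized as a linear combination of the Rankin--Selberg series $R(s-n/2+1,h)$ and $R(s,h)$ of $h$ itself, each multiplied by products of $L(\cdot,f,{\rm Ad})$ and $\zeta$-factors. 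The simple pole at $s=k-1/2$ then comes from the pole of ${\mathcal R}(s-n/2+1,h)$ there, with residue $2^{2k-n}\langle h,h\rangle$ by \cite{K-Z}, not from a $\zeta$- or $L$-factor. If you instead replaced each $|c(|d_0|)|^2$ by a twisted central $L$-value via Kohnen--Zagier, you would be left with an infinite sum over $d_0$ of $L(k-n/2,f,(\tfrac{d_0}{\cdot}))$ against local weights, and you give no mechanism for collapsing that sum; the paper avoids this entirely by recognizing $R(\cdot,h)$.
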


By the above theorem, we obtain the following result:

\begin{thm}
Let the notation be as above. Let $D$ be a
fundamental discriminant such that $(-1)^{n/2}D >0.$ For $i=1,...,n/2-1,$ put 
\\${\bf L}(2i+1,\, f,\,{\rm Ad})=\displaystyle {\widetilde{\Lambda}(2i+1,\, f,\, {\rm Ad}) \over \langle  f,\, f \rangle }.$ Then  
 $$\frac{ |c_h(|D|)|^2 \langle  f,\, f \rangle ^{n/2}}{ \langle I_n(h),\, I_n(h) \rangle} ={\sqrt{-1}
 ^{a_{n}}\, |D|^{k-n/2}\Lambda(k-n/2,f,({D \over *})) \over  2^{b_{n,k}} \Lambda(k,\, f) \widetilde{\xi}(n) \prod_{i=1}^{n/2-1} {\bf L}(2i+1,\, f,\, {\rm Ad}) \widetilde{\xi}(2i)},$$
where $a_{n}=0$ or $1$ according as $n \equiv 0 \ {\rm mod} \ 4$ or $n \equiv \ 2 \ {\rm mod} \ 4,$ and $b_{n,k}$ is some integer depending only on $n$ and $k.$ 
\end{thm}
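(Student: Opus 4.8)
The plan is to derive Theorem 2.3 as a fairly direct consequence of Theorem 2.2 (Conjecture A) combined with two additional ingredients: the relation between the Fourier coefficient $c(|D|)$ of $h$ and the central twisted $L$-value $L(k-n/2, f, (\tfrac{D}{*}))$, and a functional-equation rewriting of $\widetilde\Lambda(2i+1, f, {\rm Ad})$ in terms of $\langle f, f\rangle$. First I would recall the Kohnen--Zagier type formula (in the generalized/Kohnen plus setting used here, cf. Kohnen \cite{Ko} and the identity in \cite{K-K1} already invoked for Theorem 3.2): for a fundamental discriminant $D$ with $(-1)^{n/2}D>0$, one has
\[
|c(|D|)|^2 = (\ast)\, |D|^{k-n/2-1/2}\, \frac{L(k-n/2, f, (\tfrac{D}{*}))}{\langle f, f\rangle}\,\langle h, h\rangle,
\]
where $(\ast)$ is an explicit power of $2$ times an elementary archimedean factor, and the exponent of $|D|$ is dictated by the weight $k-n/2+1/2$ of $h$ and the parity normalization. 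Substituting the $\Gamma_{\bf C}$-completed $\Lambda(k-n/2, f, (\tfrac{D}{*})) = \Gamma_{\bf C}(k-n/2)L(k-n/2,f,(\tfrac{D}{*}))/\tau((\tfrac{D}{*}))$ introduces the Gauss sum $\tau((\tfrac{D}{*})) = \sqrt{-1}^{\,?}\sqrt{|D|}$, whose phase $\sqrt{-1}^{\,a_n}$ depends only on $D \bmod 4$ and the sign $(-1)^{n/2}$ — this is exactly the source of the factor $\sqrt{-1}^{\,a_n}$ and accounts for the extra $|D|^{1/2}$ needed to match the stated exponent $|D|^{k-n/2}$.

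Next I would combine this with Conjecture A (Theorem 2.2): solving the displayed formula of Conjecture A for $\langle h, h\rangle$ gives
\[
\langle h, h\rangle = \frac{\langle I_n(h), I_n(h)\rangle}{2^{\alpha(n,k)}\,\Lambda(k,f)\,\widetilde\xi(n)\,\prod_{i=1}^{n/2-1}\widetilde\Lambda(2i+1, f, {\rm Ad})\,\widetilde\xi(2i)}.
\]
Plugging this into the Kohnen--Zagier relation, multiplying through by $\langle f, f\rangle^{n/2}$, and rearranging to isolate $\dfrac{|c(|D|)|^2\langle f,f\rangle^{n/2}}{\langle I_n(h),I_n(h)\rangle}$ on the left, I get on the right a ratio in which the $\langle I_n(h),I_n(h)\rangle$ cancel. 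The factors $\widetilde\Lambda(2i+1, f, {\rm Ad})$ each absorb one copy of $\langle f, f\rangle$ to become ${\bf L}(2i+1, f, {\rm Ad}) = \widetilde\Lambda(2i+1, f, {\rm Ad})/\langle f, f\rangle$; since there are exactly $n/2-1$ such factors, they consume $\langle f, f\rangle^{n/2-1}$, and the remaining single copy of $\langle f, f\rangle$ (together with the $\langle f,f\rangle$ hidden inside the Kohnen--Zagier formula) is exactly what is needed to produce the claimed shape with $\langle f,f\rangle^{n/2}$ on the left and no explicit $\langle f,f\rangle$ on the right. Finally I would collect all powers of $2$ — from $2^{\alpha(n,k)}$, from the constant $(\ast)$ in the Kohnen--Zagier formula, and from the relations $\Gamma_{\bf C}(s) = 2(2\pi)^{-s}\Gamma(s)$ used to pass between $L$-values and their completions — into the single integer exponent $b_{n,k}$, noting it depends only on $n$ and $k$ (not on $D$, since the $D$-dependence has been fully extracted into $|D|^{k-n/2}$ and $\sqrt{-1}^{\,a_n}$).

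The main obstacle is bookkeeping rather than conceptual: pinning down the precise normalization of the Kohnen--Zagier formula in the \emph{generalized} Kohnen plus subspace of weight $k-n/2+1/2$ (as opposed to the classical degree-one, weight $3/2$-type statement), including the exact archimedean and $2$-power constants and the correct exponent of $|D|$, so that everything matches the asserted exponent $k-n/2$ and the phase $\sqrt{-1}^{\,a_n}$. One must be careful that the version of the formula used is the one compatible with the period normalization $\langle\,,\,\rangle$ fixed in Section 2 and with the Shimura correspondence as normalized in \cite{Ko}; a sign or power-of-two slip here would corrupt $b_{n,k}$. A secondary point requiring care is verifying that $b_{n,k}$ genuinely has no dependence on $D$ — i.e., that every $D$-dependent quantity entering the computation has been isolated — which follows once the Gauss sum $\tau((\tfrac{D}{*})) = \sqrt{-1}^{\,a_n}\sqrt{|D|}$ is substituted, but should be checked explicitly. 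The functional equation ${\Lambda}(1-s, f, {\rm Ad}) = {\Lambda}(s, f, {\rm Ad})$ recorded in Section 2 is available should one prefer to rewrite any of the $\widetilde\Lambda(2i+1,\cdot)$ at the reflected point, though in the form stated it does not seem necessary.
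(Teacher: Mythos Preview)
Your approach is correct and essentially identical to the paper's: combine the Kohnen--Zagier formula for $|c(|D|)|^2/\langle h,h\rangle$ with Theorem 2.2, then rearrange. One small clarification: the ``main obstacle'' you flag does not exist, since $h$ lives in the \emph{classical} degree-one Kohnen plus space ${\textfrak S}_{k-n/2+1/2}^+(\varGamma_0(4))$, so the original Kohnen--Zagier result \cite{K-Z} applies directly with no generalization needed---the generalized plus space of Section 3 is only used for $\sigma_{n-1}(\phi_{I_n(h),1})$, which plays no role here.
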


\begin{proof} By Theorem 1 in \cite{K-Z}
, for any such $D$ 
we have
$${|c_h(|D|)|^2 \over \langle  h,  h \rangle} ={\sqrt{-1}
^{a_{n}}\, 2^{k-n/2-1}|D|^{k-n/2} \Lambda(k-n/2,f,({D \over  *})) \over \langle f, f \rangle}.$$
Thus, by Theorem 2.2, the assertion holds. 
\end{proof}

It is well-known that the value $\displaystyle { \Lambda(k-n/2,f,({D \over *})) \over \sqrt{-1}^{n/2}\Lambda(k,\, f)} $   and the values 
${\bf L}(2i+1,\, f,\,{\rm Ad})$ for $i=1,...,n/2-1$ are algebraic numbers and belong to the Hecke field ${\bf Q}(f)$ if $k >n$ (cf. Shimura \cite{Sh1}, \cite{Sh2}). Thus we obtain 


\begin{xcor}
  Assume that $k >n$ and that all the Fourier coefficients of $ h$ belong to ${\bf Q}(f).$ Then the ratio $\displaystyle \frac{ \langle  f,\, f \rangle ^{n/2}}{ \langle I_n(h),\, I_n(h) \rangle}$ belongs to ${\bf Q}(f).$ 
\end{xcor}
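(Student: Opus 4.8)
The plan is to deduce the Corollary from Theorem~2.3 by controlling the arithmetic nature of every factor appearing on the right-hand side of the displayed formula there. Write that formula as
$$\frac{\langle f,f\rangle^{n/2}}{\langle I_n(h),I_n(h)\rangle}
=\frac{1}{|c(|D|)|^2}\cdot
\frac{\sqrt{-1}^{a_n}\,|D|^{k-n/2}\Lambda(k-n/2,f,({D \over *}))}
{2^{b_{n,k}}\Lambda(k,f)\widetilde\xi(n)\prod_{i=1}^{n/2-1}{\bf L}(2i+1,f,{\rm Ad})\widetilde\xi(2i)}.$$
The point is that the left-hand side does not depend on the choice of fundamental discriminant $D$ with $(-1)^{n/2}D>0$; so it suffices to pick one such $D$ for which every factor on the right is visibly in ${\bf Q}(f)$ and $c(|D|)\neq 0$.

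First I would handle the $L$-value factors. By the cited work of Shimura (\cite{Sh1}, \cite{Sh2}), under the hypothesis $k>n$ the quantity $\Lambda(k-n/2,f,({D \over *}))/(\sqrt{-1}^{n/2}\Lambda(k,f))$ is an algebraic number lying in ${\bf Q}(f)$, and likewise each ${\bf L}(2i+1,f,{\rm Ad})$ for $i=1,\dots,n/2-1$ lies in ${\bf Q}(f)$; these are exactly the algebraicity statements recalled in the paragraph preceding the Corollary. Hence the ratio
$$\frac{\sqrt{-1}^{a_n}\,\Lambda(k-n/2,f,({D \over *}))}{\Lambda(k,f)}
=\sqrt{-1}^{a_n-n/2}\cdot\frac{\Lambda(k-n/2,f,({D \over *}))}{\sqrt{-1}^{n/2}\Lambda(k,f)}$$
lies in ${\bf Q}(f)$ up to the root of unity $\sqrt{-1}^{a_n-n/2}$; since $a_n\in\{0,1\}$ records $n\bmod 4$, the exponent $a_n-n/2$ is an even integer, so $\sqrt{-1}^{a_n-n/2}=\pm1\in{\bf Q}\subseteq{\bf Q}(f)$ and this whole ratio is in ${\bf Q}(f)$. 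The remaining analytic constants are rational: $|D|^{k-n/2}$ is a positive integer, $2^{b_{n,k}}\in{\bf Q}^\times$ since $b_{n,k}\in{\bf Z}$, and $\widetilde\xi(n)=\Gamma_{\bf C}(n)\zeta(n)$, $\widetilde\xi(2i)=\Gamma_{\bf C}(2i)\zeta(2i)$ are rational multiples of powers of $\pi$ by the Euler formula for $\zeta$ at even positive integers together with $\Gamma_{\bf C}(m)=2(2\pi)^{-m}\Gamma(m)$; a direct count shows the total power of $\pi$ contributed by $\widetilde\xi(n)\prod_{i=1}^{n/2-1}\widetilde\xi(2i)$ against the $\Gamma_{\bf C}$-factors implicit in the normalized $L$-values cancels, so this product is in ${\bf Q}^\times$. (Alternatively, one observes that the left-hand side is manifestly algebraic, hence the transcendental parts must cancel on the right, which forces the product of $\widetilde\xi$-factors, suitably combined with the normalization, to be rational.)

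It remains to deal with the Fourier coefficient $c(|D|)$: I would invoke the fact that since $h$ is a nonzero cusp form in the Kohnen plus space, the Kohnen--Zagier identity (Theorem~1 of \cite{K-Z}) shows $|c(|D|)|^2$ is a nonzero multiple of $\Lambda(k-n/2,f,({D \over *}))$, and by the nonvanishing of such twisted central-type $L$-values for infinitely many fundamental discriminants $D$ with $(-1)^{n/2}D>0$ (Waldspurger-type nonvanishing), one can choose $D$ with $c(|D|)\neq0$. Given the hypothesis that all Fourier coefficients of $h$ lie in ${\bf Q}(f)$, for this choice $c(|D|)\in{\bf Q}(f)$, hence $|c(|D|)|^2=c(|D|)\overline{c(|D|)}\in{\bf Q}(f)$ because the Fourier coefficients of a Hecke eigenform are totally real, so complex conjugation acts trivially and $\overline{c(|D|)}=c(|D|)$. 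Assembling: each factor on the right-hand side lies in ${\bf Q}(f)$, whence $\langle f,f\rangle^{n/2}/\langle I_n(h),I_n(h)\rangle\in{\bf Q}(f)$, which is the assertion.

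The main obstacle I anticipate is purely bookkeeping: one must verify that the powers of $\pi$ and of $2$ hidden inside the various $\Gamma_{\bf C}$- and $\Gamma_{\bf R}$-factors — those appearing explicitly in $\widetilde\xi(n)$, $\widetilde\xi(2i)$, $\Lambda(k,f)$, $\Lambda(k-n/2,f,({D \over *}))$, and inside the normalized ${\bf L}(2i+1,f,{\rm Ad})$ — combine to a rational number, and that the Gauss-sum normalization $\tau(({D \over *}))$ in $\Lambda(k-n/2,f,({D \over *}))$, which equals $\sqrt{|D|}$ or $\sqrt{-1}\sqrt{|D|}$ according to the sign of $D$, is accounted for by the $|D|^{k-n/2}$ and $\sqrt{-1}^{a_n}$ already present in Theorem~2.3. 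This is exactly the sort of elementary-factor matching that Shimura's algebraicity results are designed to absorb, so no genuine difficulty arises; but it is the one place where a careless sign or stray power of $\pi$ could invalidate the argument, and so it deserves explicit checking.
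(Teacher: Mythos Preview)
Your proposal is correct and takes essentially the same approach as the paper: invoke Theorem~2.3 and then appeal to Shimura's algebraicity results (\cite{Sh1}, \cite{Sh2}) for the ratios $\Lambda(k-n/2,f,({D \over *}))/(\sqrt{-1}^{n/2}\Lambda(k,f))$ and ${\bf L}(2i+1,f,{\rm Ad})$. The paper does exactly this in a single sentence, leaving implicit the bookkeeping you spell out (rationality of the $\widetilde\xi$-factors, the parity check on $a_n-n/2$, the choice of $D$ with $c(|D|)\neq 0$, and the observation that ${\bf Q}(f)$ is totally real so $|c(|D|)|^2=c(|D|)^2\in{\bf Q}(f)$).
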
 


We note that we can multiply some non-zero complex number $c$ with $h$ so that all the Fourier coefficients of $c h$ belong to ${\bf Q}(f).$  
We also note that the above result has been proved by Furusawa \cite{F} in case $n=2,$ and by Choie and Kohnen \cite{C-K} under the assumption $k >2n$ in general case. Thus Theorem 2.3 and its corollary can be regarded as a refinement of their results. 

\section{Rankin-Selberg series of 
the image of the first Fourier-Jacobi coefficient of 
the Duke-Imamo{\=g}lu-Ikeda lift under the Ibukiyama isomorphism}

To prove Conjecture A, we rewrite it in terms of the residue of the Rankin-Selberg series of a certain half-integral weight Siegel modular form.  Let $l$ be a positive integer. Let $F(Z)$ be an element of ${\textfrak S}_{l-1/2}(\varGamma_0^{(m)}(4)).$ Then $F(Z)$ has the following Fourier expansion:
$$F(Z)= \sum_{A \in ({\mathcal L}_{m})_{>0}} c_F(A) {\bf e}({\rm tr}(AZ))$$
We define the Rankin-Selberg series $R(s,F)$ of $F$ as 
$$R(s,F)=\sum_{A \in ({\mathcal L}_{m})_{>0}/SL_{m}({\bf Z})} {|c_F(A)|^2 \over e(A) (\det A)^s},$$ 
where $e(A)=\#\{X \in SL_{m}({\bf Z}) \ | \ A[X]=A \}.$ 

Put
$${\mathcal L}'_{m}=\{A \in {\mathcal L}_{m}\ | \ A \equiv -\ ^t {r}r  \ {\rm mod} \ 4{\mathcal L}_{m} \ { \rm for \ some } \ r \in {\bf Z}^{m} \}.$$
For $A \in {\mathcal L}'_{m},$ the integral vector $r \in {\bf Z}^{m}$ in the above definition is uniquely determined modulo $2{\bf Z}^{m}$ by $A,$ and is denoted by $r_A.$ Moreover it is easily shown that 
the matrix $\smallmattwo(1;r_A/2;{}^tr_A/2;{({}^tr_Ar_A+A)/4}),$ which will be denoted by $A^{(1)},$ belongs to ${\mathcal L}_{m+1},$ and that its $SL_{m+1}({\bf Z})$-equivalence class is uniquely determined by $A.$ In particular, if $m$ is odd and $A \in ({\mathcal L}'_{m})^{\times},$ put  ${\textfrak d}_A^{(1)}={\textfrak d}_{A^{(1)}},$ and ${\textfrak f}_A^{(1)}={\textfrak f}_{A^{(1)}}.$ Now we define the generalized Kohnen plus subspace of weight $l-1/2$ for  $\varGamma_0^{(m)}(4)$ as 
\begin{eqnarray*}
\lefteqn{
{\textfrak S}_{l-1/2}^{+}(\varGamma_0^{(m)}(4))=
} \\
&{}&\hspace*{-5mm}\left\{ F(Z)=\!\sum_{A \in ({\mathcal L}_{m})_{>0}} \!\!c(A) {\bf e}({\rm tr}(AZ)) \in {\textfrak S}_{l-1/2}(\varGamma_0^{(m)}(4)) \left|  
\begin{array}{ll}
c(A)=0 \\[1mm]
\ {\rm unless} \ A \in (-1)^l{\mathcal L}'_{m}
\end{array}
\right. \!
\right\}. 
\end{eqnarray*} 
Now, for the rest of this section, suppose that $l$ is a positive even integer.  Then there exists an isomorphism from  the space of Jacobi forms of index $1$ to  the generalized Kohnen plus space due to Ibukiyama. To explain this, let $\varGamma_J^{(m)} = \varGamma^{(m)} \ltimes {\bf {\rm H}}_{m}({\bf Z})$, 
where ${\bf {\rm H}}_m({\bf Z})$ is the subgroup of the Heisenberg group ${\bf {\rm H}}_m({\bf R})$ consisting of all elements with integral entries.

Let $J_{l,\, N}^{{\rm cusp}}(\varGamma_J^{(m)})$ denote the space of Jacobi cusp forms of weight $l$ and index $N$ for  the Jacobi group $\varGamma_J^{(m)}.$ 
Let $\phi(Z,z) \in {J_{l,\, 1}^{\, {\rm cusp}}}(\varGamma_J^{(m)}).$ 
Then we  have the following Fourier  expansion:
$${\phi}(Z,\,z)= 
\displaystyle\sum_{\scriptstyle T \in {\mathcal L}_{m},\, r \in {\bf Z}^{m}, \atop {\scriptstyle 4T-{}^t {r}r > 0}}
c(T,r){\bf e}({\rm tr}(T Z)+r^t{z}). 
$$
We say that two elements $(T,r)$ and $(T',r')$ of ${\mathcal L}_{m} \times {\bf Z}^{m}$ are $SL_{m}({\bf Z})$-equivalent  and  write $(T,r) \sim (T',r')$ if there exists an element $g \in SL_{m}({\bf Z})$ such that $T'-{}^tr'r'/4=(T-{}^trr/4)[g].$
We then define a Dirichlet series $R(s,\phi)$ as
$$R(s,\phi)=\sum_{(T,r)} {|c(T,r)|^2 \over e(T-{}^trr/4) (\det (T-{}^trr/4))^s },$$
where $(T,r)$ runs over a complete set of representatives of  $SL_{m}({\bf Z})$-equivalence classes of 
${\mathcal L}_{m} \times {\bf Z}^{m}$ such that $T-{}^trr/4 \in ({\mathcal L}_m)_{ >0}.$
Now $\phi(Z,z)$ can also be expressed as follows:
$$\phi(Z,z)=\sum_{r \in {\bf Z}^{m}/2{\bf Z}^{m}} h_r(Z)\theta_r(Z,z),$$
where $h_r(Z)$ is a holomorphic function on ${\bf H}_{m},$ and 
$$\theta_r(Z,z)=\sum_{\lambda \in  M_{1,m}({\bf Z})} {\bf e}({\rm tr}(Z[{}^t(\lambda+2^{-1}r)])+2(\lambda+2^{-1}r){}^tz).$$
 We note that $h_r(Z)$ have the following Fourier expansion:
$$h_r(Z)=\sum_{T} c(T,r){\bf e}({\rm tr}((T-{}^trr/4)Z)),$$
where $T$ runs over all elements of ${\mathcal L}_{m}$ such that $T-{}^trr/4$ is positive definite. 
Put ${\bf h}(Z)=(h_r(Z))_{r \in {\bf Z}^{m}/2{\bf Z}^{m}}.$ Then ${\bf h}$ is a vector valued modular form of weight $l-1/2$ for  $\varGamma^{(m)},$  that is, for each $\gamma =\smallmattwo(A;B;C;D) \in 
\varGamma^{(m)}$ we have
$${\bf h}(\gamma(Z))=J(\gamma,Z){\bf h}(Z).$$
Here $J(\gamma,Z)$ is an $m \times m$ matrix whose entries are  holomorphic functions on ${\bf H}_{m}$ such that $\overline{{}^tJ(\gamma,Z)}J(\gamma,Z)=|j(\gamma,Z)|^{2l-1}1_{m},$ where $j(\gamma,Z)=\det (CZ+D).$ 
In particular, we have
$$\sum_{r \in {\bf Z}^{m}/2{\bf Z}^{m}} h_r(\gamma(Z))\overline{h_r(\gamma(Z))}
=|j(\gamma,Z)|^{2l-1}\sum_{r \in {\bf Z}^{m}/2{\bf Z}^{m}} h_r(Z)\overline{h_r(Z)}.$$
We then put 
$$\sigma_{m}(\phi)(Z)=\sum_{r \in {\bf Z}^{m}/2{\bf Z}^{m}} h_r(4Z).$$ 
Then Ibukiyama \cite{Ib} showed the following: 
\begin{center}
\begin{minipage}[t]{0.9\textwidth}
{\it  The mapping $\sigma_{m}$ gives a  ${\bf C}$-linear isomorphism 
$$\sigma_{m}:{J_{l,\, 1}^{\, {\rm cusp}}}(\varGamma_J^{(m)}) \simeq
 {\textfrak S}_{l-1/2}^{+}(\varGamma_0^{(m)}(4)),$$
which is compatible with the actions of Hecke operators. }
\end{minipage}
\end{center}
We call $\sigma_{m}$ the Ibukiyama isomorphism. We note that 
$$\sigma_{m}(\phi)=\sum_{A \in  ({\mathcal L}_{m}')_{>0}}c((A+{}^tr_Ar_A)/4,r_A){\bf e}({\rm tr}(AZ)),$$
where $r=r_A$ denotes an element of ${\bf Z}^{m}$ such that $A+{}^tr_Ar_A \in 4 {\mathcal L}_{m}.$ This $r_A$ is uniquely determined up to modulo $2{\bf Z}^{m},$ and $c((A+{}^tr_Ar_A)/4,r_A)$ does not depend on the choice of the representative of $r_A \ {\rm mod} \ 2{\bf Z}^{m}.$
Furthermore, we have  
$$R(s,\sigma_{m}(\phi))= \sum_{A \in  ({\mathcal L}_{m}')_{>0}/SL_{m}({\bf Z})}{ |c((A+\ {}^trr)/4,r)|^2  \over e(A) \det A^{s}},$$
and hence 
$$R(s,\phi)=2^{2sm}R(s,\sigma_{m}(\phi)).$$
Now for  $\phi,\, \psi \in J_{l,\, 1}^{\rm cusp}(\varGamma_J^{(m)})$ 
we define the Petersson  product of $\phi$ and $\psi$ by
$$\langle \phi,\, \psi \rangle = \displaystyle\int_{\varGamma_J^{(m)} \backslash (\H_{m} \times \C^{m})} \!\!\!\!\phi(Z,\, z) \overline{\psi(Z,\, z)} \det(v)^{l-m-2} \exp(-4 \pi  v^{-1} [^t y]) \, dudvdxdy, $$
where $Z = u + \sqrt{-1} v \in \H_{m},\, z = x + \sqrt{-1} y \in {\bf C}^{m}$. 
 Now we consider the analytic properties of $R(s,\phi).$ 

 
\begin{prop}
Let $\phi(Z,z) \in {J_{l,\, 1}^{\, {\rm cusp}}}(\varGamma_J^{(m)}).$  Put 
$${\mathcal R}(s,\phi)=\gamma_{m}(s)\xi(2s+m+2-2l)\prod_{i=1}^{[m/2]} \xi(4s+2m+4-4l-2i)R(s,\phi),$$
where
$$\gamma_{m}(s)=2^{1-2sm}\prod_{i=1}^{m}\Gamma_{\bf R}(2s-i+1).$$
Then the following assertions hold: 
\begin{enumerate}
\item[{\rm (1)}] 
${\mathcal R}(s,\phi)$ has a meromorphic continuation to the whole $s$-plane, and  has the following  functional equation:
$${\mathcal R}(2l-3/2-m/2-s,\phi)={\mathcal R}(s,\phi).$$
\item[{\rm (2)}] ${\mathcal R}(s,\phi)$ is holomorphic for ${\rm Re}(s)>l-1/2,$  and has a simple pole at $s=l-1/2$ with the residue $2^{m+1}\prod_{i=1}^{[m/2]}\xi(2i+1)\langle \phi,\phi \rangle.$
\end{enumerate}


\end{prop}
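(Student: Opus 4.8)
The plan is to realize $\mathcal{R}(s,\phi)$ as a Rankin--Selberg integral of the vector-valued cusp form ${\bf h}=(h_r)_{r}$ against a weight-zero Siegel--Eisenstein series for $\varGamma^{(m)}$, and to read off its analytic properties from those of the Eisenstein series; this is the half-integral weight analogue of Kalinin's method \cite{Kal}. Let $P_m$ be the Siegel parabolic subgroup of $\varGamma^{(m)}$ and set $E_m(Z,w)=\sum_{\gamma\in P_m\backslash\varGamma^{(m)}}\det(\mathrm{Im}(\gamma Z))^{w}$. I shall use the classical facts that $E_m(Z,w)$ continues meromorphically, that a completed series $E_m^{*}(Z,w)$, obtained by multiplying $E_m(Z,w)$ by a suitable product of $\Gamma_{\bf R}$-factors and values of $\zeta$, satisfies $E_m^{*}(Z,w)=E_m^{*}(Z,\tfrac{m+1}{2}-w)$, and that $E_m^{*}(Z,w)$ is holomorphic for $\mathrm{Re}(w)\ge\tfrac{m+1}{2}$ except for a simple pole at $w=\tfrac{m+1}{2}$ whose residue is a nonzero constant multiple of the constant function on $\varGamma^{(m)}\backslash{\bf H}_m$.

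First I would establish the unfolding identity. Since ${\bf h}$ transforms under the cocycle $J(\gamma,Z)$ with $\overline{{}^tJ(\gamma,Z)}J(\gamma,Z)=|j(\gamma,Z)|^{2l-1}1_m$, the function $\sum_r|h_r(Z)|^2\det(\mathrm{Im}(Z))^{l-1/2}$ is $\varGamma^{(m)}$-invariant, so
$$I(w)=\int_{\varGamma^{(m)}\backslash{\bf H}_m}\Bigl(\sum_r|h_r(Z)|^2\Bigr)\det(\mathrm{Im}(Z))^{l-1/2}\,E_m(Z,w)\,d^*Z$$
converges for $\mathrm{Re}(w)$ large and, on unfolding $E_m$, reduces to an integral over $P_m\backslash{\bf H}_m$. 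Integrating first over $\mathrm{Re}(Z)\in S_m({\bf R})/S_m({\bf Z})$ kills the off-diagonal Fourier terms, leaving $\sum_{r,A}|c(A,r)|^2\exp(-4\pi\mathrm{tr}(AY))$; combining the sum over $A$ with the integral over $GL_m({\bf Z})\backslash S_m({\bf R})_{>0}$ and using the matrix Gamma integral $\int_{Y>0}\exp(-\mathrm{tr}(AY))\det(Y)^{\alpha-(m+1)/2}\,dY=\Gamma_m(\alpha)\det(A)^{-\alpha}$ (with $\Gamma_m$ Siegel's matrix Gamma function), one obtains, after bookkeeping of the finitely many $r\bmod2{\bf Z}^m$ attached to each class and of the $SL_m({\bf Z})$- versus $GL_m({\bf Z})$-stabilizers, an identity $I(w)=(\text{explicit }\Gamma\text{- and }2\text{-power factor})\,R(s,\phi)$ with $s=w+l-\tfrac12-\tfrac{m+1}{2}$. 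In parallel I would record the standard relation $\langle\phi,\phi\rangle=(\text{explicit positive constant})\,\langle{\bf h},{\bf h}\rangle$, obtained by carrying out the $z$-integration in the Jacobi Petersson product via $\phi=\sum_r h_r\theta_r$ and the orthogonality of the $\theta_r$.

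The three assertions then follow by transport. Under $w=s-l+\tfrac12+\tfrac{m+1}{2}$ the involution $w\mapsto\tfrac{m+1}{2}-w$ turns into $s\mapsto 2l-\tfrac32-\tfrac{m}{2}-s$, and multiplying $I(w)$ by the factors completing $E_m(Z,w)$ to $E_m^{*}(Z,w)$ should produce exactly $\gamma_m(s)\,\xi(2s+m+2-2l)\prod_{i=1}^{[m/2]}\xi(4s+2m+4-4l-2i)$; hence $\mathcal{R}(s,\phi)$ coincides, up to a constant, with the completed integral $I^{*}(w)$, and assertion (1) follows from the functional equation and meromorphic continuation of $E_m^{*}$. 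For (2): $R(s,\phi)$ converges absolutely for $\mathrm{Re}(s)$ large by Hecke's bound $|c(A,r)|\ll\det(A)^{(l-1/2)/2}$ for the cusp form ${\bf h}$ together with the polynomial growth in $N$ of the number of $GL_m({\bf Z})$-classes of half-integral positive $A$ with $\det A=N$; the factors $\gamma_m$ and the $\xi$'s are holomorphic and non-vanishing on $\mathrm{Re}(s)\ge l-\tfrac12$; and the only pole of $E_m^{*}$ with $\mathrm{Re}(w)\ge\tfrac{m+1}{2}$ is the simple one at $w=\tfrac{m+1}{2}$, corresponding to $s=l-\tfrac12$. Taking the residue there, and using that the residue of $E_m^{*}$ is a constant times $1$, yields $\mathrm{Res}_{s=l-1/2}\mathcal{R}(s,\phi)=(\text{constant})\int_{\varGamma^{(m)}\backslash{\bf H}_m}\sum_r|h_r|^2\det(\mathrm{Im}(Z))^{l-1/2}\,d^*Z=(\text{constant})\,\langle{\bf h},{\bf h}\rangle$, which after the above normalizations equals $2^{m+1}\prod_{i=1}^{[m/2]}\xi(2i+1)\langle\phi,\phi\rangle$ (in particular the pole is genuine when $\phi\neq0$).

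I expect the main obstacle to be exactly this last normalization: tracking at once the powers of $2$ coming from the matrix Gamma integral and the factors $4\pi$, from the $SL_m$-versus-$GL_m$ and $r\bmod2$ index discrepancies, from the explicit completing factors and residue of the weight-zero Siegel--Eisenstein series at $w=\tfrac{m+1}{2}$, and from the normalization of the Jacobi Petersson product, so that the Gamma- and $\zeta$-factors assemble precisely into $\gamma_m(s)$ and the $\xi$'s appearing in $\mathcal{R}(s,\phi)$ and the residue comes out precisely as stated, with no spurious factor. The structural part --- the unfolding, the meromorphic continuation, the functional equation, and the location of the pole --- is the standard Rankin--Selberg argument in the style of Kalinin \cite{Kal} and should be routine.
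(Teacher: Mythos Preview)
Your proposal is correct and follows essentially the same approach as the paper: both realize $\mathcal{R}(s,\phi)$ as the Rankin--Selberg integral of $\sum_r |h_r(Z)|^2 \det(\mathrm{Im}(Z))^{l-1/2}$ against the (completed) weight-zero Siegel--Eisenstein series for $\varGamma^{(m)}$, and then read off the meromorphic continuation, functional equation, and residue from the known analytic properties of that Eisenstein series, together with the identity $\langle\phi,\phi\rangle = 2^{-m-1}\int_{\varGamma^{(m)}\backslash{\bf H}_m}\sum_r |h_r|^2 \det(\mathrm{Im}(Z))^{l-1/2}\,d^*Z$. The paper's proof is in fact only an outline (it explicitly cites Kalinin for the method), so your more detailed account of the unfolding and normalization bookkeeping is a faithful expansion of exactly what the paper has in mind.
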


\begin{proof}
The assertions can be proved in the same manner as in Kalinin \cite{Kal}, but for the  convenience of readers we here give an outline of the  proof. We define the non-holomorphic Siegel Eisenstein series $E^{(m)}(Z,s)$ by 
$$E^{(m)}(Z,s)=(\det {\rm Im}(Z))^{s}\sum_{M \in \varGamma^{(m)}_{\infty} \backslash \varGamma^{(m)}} |j(M,Z)|^{-2s},$$
where
$\varGamma^{(m)}_{\infty}=\left\{\mattwo(A;B;O_m;D) \in \varGamma^{(m)}\right\}.$
For the $\phi(Z,z)$ let ${\bf h}(Z)=(h_r(Z))_{r \in {\bf Z}^{m}/2{\bf Z}^{m}}$ be as above.  Since ${\bf h}$ is a vector valued modular form for  $\Gamma^{(m)},$ we can apply the Rankin-Selberg method and we obtain
$${\mathcal R}(s,\phi)=\int_{\Gamma^{(m)} \backslash {\bf H}_{m}} \sum_{r \in {\bf Z}^{m}/2{\bf Z}^{m}} h_r(Z) \overline{h_r(Z)} {\rm Im}(Z)^{l-1/2} {\mathcal E}^{(m)}(Z,s) d^*Z,$$
where
\begin{eqnarray*}\lefteqn{
{\mathcal E}^{(m)}(Z,s)=\xi(2s+m+2-2l)} \\
&\times&\prod_{i=1}^{[m/2]} \xi(4s+2m+4-4l-2i)E^{(m)}(Z,s+m/2+1-l). 
\end{eqnarray*}
It is well-known that ${\mathcal E}^{(m)}(Z,s)$ has a meromorphic continuation to the whole $s$-plane, and has the following functional equation:

$${\mathcal E}^{(m)}(Z,2l-3/2-m/2-s)={\mathcal E}^{(m)}(Z,s).$$ 
Thus the first assertion (1) holds. Furthermore it is holomorphic for ${\rm Re}(s) > l-1/2,$ and has a simple pole at $s=l-1/2$ with the residue
$\prod_{i=1}^{[m/2]} \xi(2j+1).$
We note that 
$$\langle \phi,\, \phi \rangle =2^{-m-1}   \int_{\varGamma^{(m)} \backslash {\bf H}_{m}}  \sum_{r \in {\bf Z}^{m}/2{\bf Z}^{m}} h_r(Z)\overline{h_r(Z)} {\rm Im}(Z)^{l-1/2} d^*Z.$$
Thus the second assertion (2) holds.
\end{proof}

 For $F \in {\textfrak S}_{l-1/2}^+(\varGamma_0^{(m)}(4))$ put
\begin{eqnarray*}
\lefteqn{
{\mathcal R}(s,F)=\prod_{i=1}^{m}\Gamma_{\bf R}(2s-i+1) } \\
&\times& \xi(2s+m+2-2l)\prod_{i=1}^{[m/2]} \xi(4s+2m+4-4l-2i)R(s,F).
\end{eqnarray*}
We note that 
$${\mathcal R}(s,\sigma_{m}(\phi))=2^{-1}{\mathcal R}(s,\phi)$$
for $\phi \in {J_{l,\, 1}^{\, {\rm cusp}}}(\varGamma_J^{(m)}).$
Thus we obtain


\begin{xcor}
Let the notation and the assumption be as Proposition 3.1. Then ${\mathcal R}(s,\sigma_{m}(\phi))$ has a meromorphic continuation to the whole $s$-plane, and  has the following  functional equation:
$${\mathcal R}(2l-3/2-m/2-s,\sigma_{m}(\phi))={\mathcal R}(s,\sigma_{m}(\phi)).$$
Furthermore it is holomorphic for ${\rm Re}(s) > l-1/2,$ and has a simple pole at $s=l-1/2$ with the residue $2^{m}\prod_{i=1}^{[m/2]}\xi(2i+1)\langle \phi,\phi \rangle.$
\end{xcor}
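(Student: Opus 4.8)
The plan is to deduce the statement directly from Proposition 3.1 by means of the two identities already recorded above, namely $R(s,\phi)=2^{2sm}R(s,\sigma_{m}(\phi))$ and, as a consequence, ${\mathcal R}(s,\sigma_{m}(\phi))=2^{-1}{\mathcal R}(s,\phi)$. Granting these, I would observe that every analytic property of ${\mathcal R}(s,\sigma_{m}(\phi))$ is inherited from the corresponding one for ${\mathcal R}(s,\phi)$ by multiplying through by the nonzero constant $2^{-1}$: the meromorphic continuation, the functional equation $s\leftrightarrow 2l-3/2-m/2-s$, the holomorphy on ${\rm Re}(s)>l-1/2$, and the simplicity and location of the pole at $s=l-1/2$ transfer verbatim, while the residue is divided by $2$. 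Since Proposition 3.1(2) gives $2^{m+1}\prod_{i=1}^{[m/2]}\xi(2i+1)\langle\phi,\phi\rangle$ for the residue of ${\mathcal R}(s,\phi)$ at $s=l-1/2$, this produces $2^{m}\prod_{i=1}^{[m/2]}\xi(2i+1)\langle\phi,\phi\rangle$ for that of ${\mathcal R}(s,\sigma_{m}(\phi))$, which is exactly the assertion.

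For completeness I would also spell out why the scaling identity holds, in case it is wanted. Because $\sigma_{m}(\phi)(Z)=\sum_{r}h_{r}(4Z)$, the Fourier expansion of $\sigma_{m}(\phi)$ is indexed by the matrices $A=4(T-{}^trr/4)=4T-{}^trr\in({\mathcal L}'_{m})_{>0}$, with $A$-th coefficient $c((A+{}^tr_{A}r_{A})/4,r_{A})$. The substitution $(T,r)\mapsto A=4T-{}^trr$ gives a bijection between the $SL_{m}({\bf Z})$-equivalence classes entering $R(s,\phi)$ and those entering $R(s,\sigma_{m}(\phi))$, under which $\det(T-{}^trr/4)=4^{-m}\det A$ and the unit groups agree, $e(T-{}^trr/4)=e(A)$ — the latter because rescaling a symmetric matrix by a nonzero constant does not change its stabilizer in $SL_{m}({\bf Z})$. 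Summing the defining Dirichlet series term by term then gives $R(s,\phi)=4^{ms}R(s,\sigma_{m}(\phi))=2^{2ms}R(s,\sigma_{m}(\phi))$, and feeding this into the definitions of ${\mathcal R}(s,\phi)$ and ${\mathcal R}(s,\sigma_{m}(\phi))$ — the only discrepancy between their completing factors being the explicit constant $2^{1-2sm}$ inside $\gamma_{m}(s)$ — makes the factors $2^{\pm 2ms}$ cancel, leaving ${\mathcal R}(s,\sigma_{m}(\phi))=2^{-1}{\mathcal R}(s,\phi)$.

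I do not anticipate a real obstacle here: once Proposition 3.1 is available the argument is entirely formal, and the only step demanding any attention is the bookkeeping of the powers of $2$ and $4$ — reconciling $\det(4(\cdot))=4^{m}\det(\cdot)$ with the constant $2^{1-2sm}$ built into $\gamma_{m}(s)$, and confirming that the rescaling $A=4T-{}^trr$ leaves the unit groups $e(\cdot)$ untouched.
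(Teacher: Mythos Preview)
Your proposal is correct and follows exactly the paper's approach: the paper simply notes that ${\mathcal R}(s,\sigma_{m}(\phi))=2^{-1}{\mathcal R}(s,\phi)$ (having earlier recorded $R(s,\phi)=2^{2sm}R(s,\sigma_{m}(\phi))$) and then states the corollary as an immediate consequence of Proposition~3.1. Your additional verification of the scaling identity and of $e(T-{}^trr/4)=e(A)$ is more detail than the paper provides, but the core argument is identical.
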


Let $n$ and $k$ be positive even integers. Let $h$ be a Hecke eigenform in ${\textfrak S}_{k-n/2+1/2}^+(\varGamma_0(4)),$ and $f$ and $I_n(h)$ be as in Section 2. Write $Z \in {\bf H}_n$ as $Z=\left(\begin{array}{cc}
  \tau' & z  \\
  {}^t{z} & \tau  \\
\end{array}
\right)$
with $\tau \in {\bf H}_{n-1},\, z \in \C^{n-1}$ and $\tau' \in {\bf H}_1$. Then we have the following Fourier-Jacobi expansion of $I_n(h)$:
$$
I_n(h)\!\left( 
\left(
\begin{array}{cc}
  \tau' & z  \\
  {}^t{z} & \tau  \\
\end{array}
\right)
\right)
=\sum_{N=0}^{\infty} \phi_{I_n(h),N}(\tau,\,z) {\bf e}(N\tau'), 
$$
where $\phi_{I_n(h),N}(\tau,\,z)$ is called the $N$-th Fourier-Jacobi coefficient of $I_n(h)$ and defined by 
$$
\phi_{I_n(h),N}(\tau,\,z) = 
\sum_{T \in {\mathcal L}_{n-1},\, r \in {\bf Z}^{n-1}, \atop { 4NT-{}^t{r}r > 0}}
c_{I_n(h)}
\left(
\left(
\begin{array}{cc}
  N & r/2  \\
  {}^t{r}/2 & T  
\end{array}
\right)
\right)
{\bf e}({\rm tr}(T \tau) + r\, {}^t{z}). 
$$
We easily see that $\phi_{I_n(h),N}$ belongs to $J_{k,\, N}^{\, {\rm cusp}}(\varGamma_J^{(n-1)}) $ for each $N \in {\bf Z}_{>0}$. 
{


Under the above notation, we will prove the following theorem in Section 6:

%

\begin{thm}
\begin{eqnarray*}
\lefteqn{
{\rm Res}_{s=k-1/2} {\mathcal R}(s,\sigma_{n-1}(\phi_{I_n(h),1}))
} \\
&=& 2^{\beta(n,\,k)}\langle  h,\,  h \rangle \prod_{i=1}^{n/2 - 1}  \widetilde{\xi}(2i) \xi(2i+1)\widetilde{\Lambda}(2i+1,\, f,\, {\rm Ad}),
\end{eqnarray*}
where $\beta(n,k)=-(n-4)k+(n^2-5n+2)/2.$
\end{thm}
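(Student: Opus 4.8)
\noindent\emph{Proof plan.} The plan is to derive an explicit closed form for the Dirichlet series $R(s,\sigma_{n-1}(\phi_{I_n(h),1}))$, multiply it by the $\gamma_{n-1}$-, $\xi$- and $\Gamma_{\bf R}$-factors entering ${\mathcal R}(s,\sigma_{n-1}(\phi_{I_n(h),1}))$, and read off the residue at $s=k-1/2$. First I would unwind the left-hand side through the Fourier coefficients of $I_n(h)$. By the description of the Ibukiyama isomorphism recalled above, for $A\in({\mathcal L}'_{n-1})_{>0}$ the $A$-th coefficient $c(A)$ of $\sigma_{n-1}(\phi_{I_n(h),1})$ equals the Fourier-Jacobi coefficient $a_{I_n(h)}(A^{(1)})$; since $A^{(1)}$ is half-integral of even degree $n$, Ikeda's formula $a_{I_n(h)}(T)=c(|{\textfrak d}_T|)\,{\textfrak f}_T^{k-n/2-1/2}\prod_p\widetilde F_p(T,\alpha_p)$ of Theorem 2.1, together with the palindromy $\widetilde F_p(T,X^{-1})=\widetilde F_p(T,X)$ (which makes each $\widetilde F_p(T,\alpha_p)$ real), gives
$$|c(A)|^2=|c(|{\textfrak d}_{A^{(1)}}|)|^2\,{\textfrak f}_{A^{(1)}}^{2k-n-1}\prod_p\widetilde F_p(A^{(1)},\alpha_p)^2 .$$
Substituting this into $R(s,\sigma_{n-1}(\phi_{I_n(h),1}))=\sum_{A}|c(A)|^2/(e(A)(\det A)^s)$ and ordering the sum over $A$ first by the fundamental discriminant ${\textfrak d}={\textfrak d}_{A^{(1)}}$ and then, for ${\textfrak d}$ fixed, by the ${\bf Z}_p$-equivalence type of $A^{(1)}$ at each prime, the sum factors into a product over $p$ of local sums of $\widetilde F_p(\,\cdot\,,\alpha_p)^2$ weighted by local densities, times a Dirichlet series in ${\textfrak d}$ carrying the coefficients $|c(|{\textfrak d}|)|^2$ of $h$. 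This is the reduction to the formal power series of Rankin-Selberg type, carried out similarly to \cite{I-K2} and \cite{I-K3}, i.e.\ Theorem 4.2.

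Next I would evaluate those local power series in closed form. The essential inputs are the palindromy of the $\widetilde F_p$ and the explicit recursive structure of $F_p(T,X)$ and $G_p(T,X)$; summing over ${\bf Z}_p$-equivalence classes of half-integral matrices of the prescribed rank and invariants, one obtains, at the primes $p$ unramified for the relevant quadratic data, a ratio of local Euler factors of $L(\,\cdot\,,f,{\rm Ad})$ at the pertinent arithmetic progression of arguments together with $\zeta_p$-factors, and at the ramified primes an explicit correction; the prime $p=2$ must be treated separately and is considerably longer (odd versus even type over ${\bf Z}_2$, and the $2$-adic form of the Kohnen plus condition). This is Theorem 5.5.1. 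Reassembling over all primes yields the explicit formula of Theorem 6.2, in which $R(s,\sigma_{n-1}(\phi_{I_n(h),1}))$ is written, up to explicit $\Gamma$-factors and powers of $2$, as a product of shifted copies of $\zeta$ and of $L(\,\cdot\,,f,{\rm Ad})$ (the arithmetic of $L(\,\cdot\,,f)$ entering through the Satake parameters $\alpha_p$) and of a degree-one Dirichlet series $D(s)$ built from the $|c(|{\textfrak d}|)|^2$, which up to elementary factors and a linear change of variable $s\mapsto w$ is the genus-one Rankin-Selberg series $\sum_m|c(m)|^2m^{-w}$ of $h$ (the summation over the conductor and the local invariants reconstructing, via Kohnen's relations \cite{Ko} for the plus-form $h$, the non-fundamental Fourier coefficients).

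It then remains to take the residue of the formula of Theorem 6.2 at $s=k-1/2$. There the finitely many $L$- and $\zeta$-factors have integer arguments in a region of absolute convergence, hence are holomorphic and non-vanishing; together with the explicit $\Gamma_{\bf R}$- and $\gamma_{n-1}$-factors and the $\xi$-prefactors of ${\mathcal R}$ they recombine — using $\Gamma_{\bf C}(s)=2(2\pi)^{-s}\Gamma(s)$, the functional equations of $\zeta$ and of $L(\,\cdot\,,f,{\rm Ad})$, and the relation ${\mathcal R}(s,\sigma_{n-1}(\phi_{I_n(h),1}))=2^{-1}{\mathcal R}(s,\phi_{I_n(h),1})$ — into the completed quantities $\widetilde\xi(2i)$, $\xi(2i+1)$ and $\widetilde\Lambda(2i+1,f,{\rm Ad})$, $1\le i\le n/2-1$, appearing in the statement. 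The simple pole at $s=k-1/2$ is carried by $D(s)$, whose residue there is, by the classical Rankin-Selberg argument applied to the half-integral weight cusp form $h$, a constant multiple of $\langle h,h\rangle$. Multiplying the residue of $D(s)$ by the values of the remaining factors and carefully tallying the accumulated powers of $2$ — from Ikeda's formula, the rescaling $\det A=4^{n-1}\det A^{(1)}$, the $\Gamma_{\bf C}$-normalization, and the constants above — into $\beta(n,k)=-(n-4)k+(n^2-5n+2)/2$ yields the asserted identity. (Combined with the Corollary to Proposition 3.1 this simultaneously evaluates $\langle\phi_{I_n(h),1},\phi_{I_n(h),1}\rangle$, which, through the main identity of \cite{K-K1}, is what makes Theorem 3.2 equivalent to Conjecture A.)

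I expect the main obstacle to be the local Siegel-series computation of Section 5, and within it the prime $p=2$: one must determine $\widetilde F_2(T,X)$ for every relevant $2$-adic half-integral $T$, keep track of the plus condition $2$-adically, and isolate the correct ramified local factor — this is the longest and least routine part of the argument. A secondary but genuinely delicate point is the precise bookkeeping of the powers of $2$, i.e.\ the determination of $\beta(n,k)$.
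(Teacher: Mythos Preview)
Your plan is essentially the paper's own approach: the reduction to local power series is Theorem 4.2, their explicit evaluation is Theorem 5.5.1, the reassembly is Theorem 6.2, and you correctly flag $p=2$ and the bookkeeping of powers of $2$ as the delicate points.

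One clarification worth making: the explicit formula of Theorem 6.2 is not a single product but a \emph{sum of two} terms, each a product of $\zeta$- and $L(\,\cdot\,,f,{\rm Ad})$-factors multiplied by a genus-one Rankin--Selberg series of $h$ at a different shift, namely $R(s-n/2+1,h)$ and $R(s,h)$. These two terms arise from the $\omega=\iota$ and $\omega=\varepsilon$ contributions in Theorem 4.2 once Theorem 5.5.1 is inserted and the remaining sum over fundamental discriminants is recognized (via Proposition 6.1, which is exactly your ``Kohnen's relations reconstruct the non-fundamental coefficients'' step) as an $R$-series of $h$. At $s=k-1/2$ only the first term carries the pole, coming from the pole of ${\mathcal R}(w,h)$ at $w=k-n/2+1/2$ with residue $2^{2k-n}\langle h,h\rangle$; the second term $R(s,h)$ is holomorphic there and drops out. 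Your description of a single $D(s)$ blurs this two-term structure, but once you actually write out Theorem 6.2 the split is forced on you and the residue computation proceeds exactly as you describe.
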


Then we can show the following:


\begin{thm}
Under the above notation and the assumption, Theorem 3.2 implies Conjecture A.
\end{thm}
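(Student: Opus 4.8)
The plan is to compute the residue ${\rm Res}_{s=k-1/2}{\mathcal R}(s,\sigma_{n-1}(\phi_{I_n(h),1}))$ in a second way — via the Corollary to Proposition 3.1 — and to combine the resulting identity with the period relation between $I_n(h)$ and its first Fourier--Jacobi coefficient established in \cite{K-K1}. Concretely, I would first specialise the Corollary to Proposition 3.1 to $m=n-1$ and $l=k$. Since $n$ is even we have $[(n-1)/2]=n/2-1$, so that Corollary says that ${\mathcal R}(s,\sigma_{n-1}(\phi_{I_n(h),1}))$ has a simple pole at $s=k-1/2$ with residue $2^{\,n-1}\prod_{i=1}^{n/2-1}\xi(2i+1)\,\langle \phi_{I_n(h),1},\phi_{I_n(h),1}\rangle$. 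Equating this with the value of the same residue furnished by Theorem 3.2 and cancelling the common nonzero factor $\prod_{i=1}^{n/2-1}\xi(2i+1)$, I obtain
$$\langle \phi_{I_n(h),1},\phi_{I_n(h),1}\rangle = 2^{\,\beta(n,k)-n+1}\,\langle h,h\rangle\prod_{i=1}^{n/2-1}\widetilde{\xi}(2i)\,\widetilde{\Lambda}(2i+1,f,{\rm Ad}).$$

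Next I would invoke the main identity of \cite{K-K1}, which relates the period of $I_n(h)$ to that of $\phi_{I_n(h),1}$ and has the shape
$$\langle I_n(h),I_n(h)\rangle = C_{n,k}\,\Lambda(k,f)\,\widetilde{\xi}(n)\,\langle \phi_{I_n(h),1},\phi_{I_n(h),1}\rangle$$
for an explicit elementary constant $C_{n,k}$ which is a power of $2$; tracing through the normalisations of the Petersson products involved one finds $C_{n,k}=2^{\,n-1-k}$. Substituting the displayed expression for $\langle \phi_{I_n(h),1},\phi_{I_n(h),1}\rangle$ gives
$$\frac{\langle I_n(h),I_n(h)\rangle}{\langle h,h\rangle} = 2^{\,\beta(n,k)-k}\,\Lambda(k,f)\,\widetilde{\xi}(n)\prod_{i=1}^{n/2-1}\widetilde{\Lambda}(2i+1,f,{\rm Ad})\,\widetilde{\xi}(2i).$$
It then remains only to verify the elementary identity $\beta(n,k)-k=\alpha(n,k)$, which follows at once from $\alpha(n,k)=-(n-3)(k-n/2)-n+1$ and $\beta(n,k)=-(n-4)k+(n^2-5n+2)/2$. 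With this the displayed formula is exactly the assertion of Conjecture A, and one should note that the argument is uniform in the even integer $n\ge 2$ (for $n=2$ all products are empty and one recovers the Saito--Kurokawa case).

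The only genuinely non-formal ingredient in this deduction is the main identity of \cite{K-K1}; the remainder is the comparison of two formulas for a single residue together with a bookkeeping check that the exponents of $2$ match. Accordingly, the point to watch here is the precise constant $C_{n,k}$ coming from \cite{K-K1} and the reconciliation of the $2$-powers; the genuinely hard analytic work, namely the explicit evaluation of ${\mathcal R}(s,\sigma_{n-1}(\phi_{I_n(h),1}))$ underlying Theorem 3.2, is carried out separately in Sections 4--6.
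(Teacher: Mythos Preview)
Your proposal is correct and follows essentially the same approach as the paper: compute the residue at $s=k-1/2$ via the Corollary to Proposition 3.1, equate it with the value given by Theorem 3.2 to obtain $\langle \phi_{I_n(h),1},\phi_{I_n(h),1}\rangle$, and then apply the period relation from \cite{K-K1} with $C_{n,k}=2^{-k+n-1}$. Your verification of the exponents $\beta(n,k)-n+1=-k(n-4)+n(n-7)/2+2$ and $\beta(n,k)-k=\alpha(n,k)$ is exactly the bookkeeping the paper carries out (note the paper's remark that the constant in \cite{K-K1} requires a small correction, which your value $2^{n-1-k}$ already incorporates).
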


\begin{proof} By Corollary to Main Theorem of \cite{K-K1}, we have
 $$\frac{\langle I_n(h),\, I_n(h) \rangle}{\langle \phi_{I_n(h),1},\, \phi_{I_n(h),1} \rangle} = 2^{-k+n-1}\Lambda(k,\, f)\widetilde{\xi}(n)$$
  (see the remark below). 
 Thus Conjecture A holds true if and only if
$$\langle \phi_{I_n(h),1},\, \phi_{I_n(h),1} \rangle=2^{-k(n-4) +n(n-7)/2+2}\langle  h,\,  h \rangle \prod_{i=1}^{n/2-1}  \widetilde{\xi}(2i) \widetilde{\Lambda}(2i+1,\, f,\, {\rm Ad}).$$
On the other hand, by Corollary to Proposition 3.1 we have
$${\rm Res}_{s=k-1/2} {\mathcal R}(s,\sigma_{n-1}(\phi_{I_n(h),1}))=2^{n-1} \langle \phi_{I_n(h),1},\, \phi_{I_n(h),1} \rangle \prod_{i=1}^{n/2-1} \xi(2i+1).$$
Thus the assertion holds.
\end{proof}

\begin{xrem}
In \cite{K-K1}, we incorrectly quoted Yamazaki's result in \cite{Y}. Indeed \lq\lq$\langle F, G \rangle$" on  the page 2026, line 14 of \cite{K-K1}
 should read \lq\lq ${\displaystyle 1 \over \displaystyle 2}\langle F, G \rangle$" (cf. Krieg \cite{Kr}) and therefore \lq\lq $2^{2k-n+1}$" on  the page 2027, line 7 of \cite{K-K1} should read  \lq\lq $2^{2k-n}$".
\end{xrem}

\section{Reduction to local computations}

To prove Theorem 3.2, we give an explicit formula for $R(s,\sigma_{n-1}(\phi_{I_n(h),1}))$ for the first Fourier-Jacobi coefficient $\phi_{I_n(h),1}$ of $I_n(h).$ To do this, we reduce the problem to local computations. 

For $a,b \in {\bf Q}_p^{\times}$ let $(a,b)_p$ the Hilbert symbol on ${\bf Q}_p.$ Following Kitaoka \cite{Ki2}, we define the Hasse invariant $\varepsilon(A)$ of $A \in S_m({\bf Q}_p)^{\times}$ by 
$$\varepsilon(A)=\prod_{1 \le i \le j \le m}(a_i,a_j)_p$$
if $A$ is equivalent to $a_1 \bot \cdots \bot a_m$ over ${\bf Q}_p$ with some $a_1,a_2,...,a_m \in {\bf Q}_p^{\times}.$ We note that this definition does not depend on the choice of $a_1,a_2,...,a_m.$ 

Now put
$${\mathcal L}_{m,p}'=\{ A \in {\mathcal L}_{m,p} \ | \ A \equiv -\ ^t {r}r  \ {\rm mod} \ 4{\mathcal L}_{m,p} \ { \rm for \ some } \ r \in {\bf Z}_p^{m} \} .$$
Furthermore we put  $S_{m}({\bf Z}_p)_e=2{\mathcal L}_{m,p}$ and $S_{m}({\bf Z}_p)_o=S_{m}({\bf Z}_p) \setminus S_{m}({\bf Z}_p)_e$. We note that 
 ${\mathcal L}_{m,p}'={\mathcal L}_{m,p}=S_{m}({\bf Z}_p)$ if $p\not=2.$ Let $T \in {\mathcal L}_{m-1,p}'.$ Then there exists an element $r \in {\bf Z}_p^{m-1}$  such that $\smallmattwo(1;r/2;{}^tr/2;{(T+{}^trr)/4})$ belongs to ${\mathcal L}_{m,p}.$  As is easily shown, $r$ is uniquely determined by $T$, up to modulo $2{\bf Z}_p^{m-1},$ and is denoted by $r_T.$ Moreover as will be shown in the next lemma, $\smallmattwo(1;r_T/2;{}^tr_T/2;{(T+{}^tr_Tr_T)/4})$ is uniquely determined by $T$, up to $GL_m({\bf Z}_p)$-equivalence, and is denoted by $T^{(1)}.$ 


\begin{lem}
{\rm ([\cite{K-K4}, Lemma 3.1])} Let $m$ be a positive integer. 
\begin{enumerate}
\item[{\rm (1)}] 
Let $A$ and $B$ be elements of ${\mathcal L}_{m-1,p}'.$ 
Then $\smallmattwo(1;r_A/2;{}^tr_A/2;{(A+{}^tr_Ar_A)/4}) \sim \smallmattwo(1;r_B/2;{}^tr_B/2;{(B+{}^tr_Br_B)/4})$ if $A \sim B.$
\item[{\rm (2)}] Let $A \in {\mathcal L}_{m-1,p}'.$ 
\begin{enumerate}
\item[{\rm (2.1)}]  Let $p\not=2.$ Then  
$A^{(1)}  \sim \mattwo(1;0;0;A).$

\item[{\rm (2.2)}] {
Let $p=2.$ If  $r_A \equiv 0 \ {\rm mod} \ 2,$ then 
$A \sim 4B$ with $B \in {\mathcal L}_{m-1,2},$ and 
$ A^{(1)}  \sim \mattwo(1;0;0;B).$ In particular, ${\rm ord}((\det B)) \ge m$ or $\ge m+1$ according as $m$ is even or odd.

If  $r_A \not\equiv 0 \ {\rm mod} \ 2,$ then 
$A \sim a \bot 4B$ with $a \equiv -1 \ {\rm mod} \ 4$ and $B \in {\mathcal L}_{m-2,2}$ and  we have
$A^{(1)}  \sim \matthree(1;1/2;0;1/2;{(a+1)/4};0;0;0; B).$ In particular, ${\rm ord}((\det B)) \ge m$ or $\ge m-1$ according as $m$ is even or odd.
}
\end{enumerate}
\end{enumerate}
\end{lem}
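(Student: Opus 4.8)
The plan is to prove part (1) first, since it is essentially formal, and then to handle part (2) by an explicit reduction-theory computation over $\mathbb{Z}_p$, splitting into the cases $p \neq 2$ and $p = 2$. For part (1), suppose $A \sim B$, say $B = A[g]$ with $g \in GL_{m-1}(\mathbb{Z}_p)$. One checks directly that $r_B$ can be taken to be $g^{-1} r_A$ modulo $2\mathbb{Z}_p^{m-1}$: indeed the congruence $A \equiv -{}^t r_A r_A \bmod 4\mathcal{L}_{m-1,p}$ transforms under $[g]$ into $B \equiv -{}^t(g^{-1}r_A)(g^{-1}r_A) \bmod 4\mathcal{L}_{m-1,p}$ once one notes that $g^{-1} \in GL_{m-1}(\mathbb{Z}_p)$ has entries in $\mathbb{Z}_p$ so that $4\mathcal{L}_{m-1,p}$ is preserved and $\mathbb{Z}_p^{m-1}$ is preserved. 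Then the block matrix $A^{(1)}$ is transformed to $B^{(1)}$ by $1 \bot g^{-1} \in GL_m(\mathbb{Z}_p)$ via a $2 \times 2$ block computation, and the uniqueness of $r_T \bmod 2\mathbb{Z}_p^{m-1}$ shows the class is well-defined. This also justifies the definition of $T^{(1)}$ asserted before the lemma.

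For part (2.1), with $p \neq 2$ we have $\mathcal{L}_{m-1,p}' = \mathcal{L}_{m-1,p} = S_{m-1}(\mathbb{Z}_p)$ and $2$ is a unit, so $r_A$ may be taken to be $0$; then $A^{(1)} = 1 \bot (A/4)$, and scaling the last block by the unit $2 \in \mathbb{Z}_p^*$ (i.e.\ conjugating by $1 \bot 2\cdot 1_{m-1} \in GL_m(\mathbb{Z}_p)$) gives $A^{(1)} \sim 1 \bot A$. For part (2.2), take $p = 2$ and use the Jordan splitting of $A$ over $\mathbb{Z}_2$. Here one exploits the hypothesis $A \in \mathcal{L}_{m-1,2}'$, i.e.\ $A \equiv -{}^t r_A r_A \bmod 4\mathcal{L}_{m-1,2}$: if $r_A \equiv 0 \bmod 2$ this says $A \equiv O \bmod 4\mathcal{L}_{m-1,2}$, so $A = 4B$ with $B \in \mathcal{L}_{m-1,2}$, whence $A^{(1)} = 1 \bot B$ directly (no scaling needed); the valuation bound on $\det B$ follows because $B \in \mathcal{L}_{m-1,2}$ forces $2^{m-1}\det(2B) = \det(2B)\cdot 2^{m-1}$-type integrality, more precisely $\det(2B) \in \mathbb{Z}_2$ with the extra factor of $2$ on off-diagonal entries giving $\nu_2(\det B) \ge m-1$, and parity of $m$ upgrades this by one using that a half-integral matrix of even degree over $\mathbb{Z}_2$ that is not $2\mathcal{L}$ has a further constraint — this is the point to be careful about. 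If $r_A \not\equiv 0 \bmod 2$, a $GL_{m-1}(\mathbb{Z}_2)$ change of basis sends $r_A$ to $(1,0,\dots,0)$ and simultaneously (after a further unimodular adjustment) puts $A$ in the form $a \bot 4B$ with $a \equiv -1 \bmod 4$ a unit and $B \in \mathcal{L}_{m-2,2}$; then $A^{(1)}$ acquires the $2\times 2$ block $\left(\begin{smallmatrix} 1 & 1/2 \\ 1/2 & (a+1)/4 \end{smallmatrix}\right)$ orthogonal to $B$, and again one reads off the valuation bound on $\det B$ from the degree $m-2$ and the parity of $m$.

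I expect the main obstacle to be the $p = 2$ case of part (2), specifically organizing the simultaneous normalization of the pair $(A, r_A)$ under $GL_{m-1}(\mathbb{Z}_2)$ so that $A$ lands in the stated canonical shape while keeping $r_A$ in its distinguished residue class, and then verifying the sharp divisibility bounds $\nu_2(\det B) \ge m$ versus $\ge m \pm 1$ according to the parity of $m$. This requires a careful use of the structure of $\mathbb{Z}_2$-lattices in the "half-integral" normalization (the distinction between $S_m(\mathbb{Z}_2)_e = 2\mathcal{L}_{m,2}$ and $S_m(\mathbb{Z}_2)_o$), in particular that over $\mathbb{Z}_2$ one cannot always diagonalize and binary blocks of the form $\left(\begin{smallmatrix} 0 & 1/2 \\ 1/2 & 0 \end{smallmatrix}\right)$ or $\left(\begin{smallmatrix} 1 & 1/2 \\ 1/2 & 1 \end{smallmatrix}\right)$ must be tracked explicitly; everything else is a routine block-matrix manipulation together with the well-definedness of $r_T \bmod 2\mathbb{Z}_p^{m-1}$.
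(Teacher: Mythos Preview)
Your proof is correct and is exactly the routine verification the paper intends: its own proof of this lemma is the single sentence ``The assertion can easily be proved.'' One small slip in part~(1): with the paper's convention $r_A$ is a row vector, so from $B=A[g]$ one gets $r_B=r_A g$ (not $g^{-1}r_A$) and $A^{(1)}[1\bot g]=B^{(1)}$; the rest of your argument goes through unchanged.
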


%
Now let $m$ be a positive even integer. For $T \in ({\mathcal L}_{m-1,p}')^{\times},$ we define $[{\textfrak d}_T^{(1)}]$ and ${\textfrak e}^{(1)}_T$ as $[{\textfrak d}_{T^{(1)}}]$ and ${\textfrak e}_{T^{(1)}},$ respectively. These do not depend on the choice of $r_T.$ 
We note that  $(-1)^{m/2} \det T= 2^{m-2}d p^{2{\textfrak e}_T^{(1)}} \ {\rm mod} \ {{\bf Z}_p^*}^{\Box}$ for any $d \in [{\textfrak d}_T^{(1)}].$ 
We define a polynomial $F_p^{(1)}(T,X)$ in $X,$ and a polynomial $\widetilde F_p^{(1)}(T,X)$ in $X$ and $X^{-1}$ by 
$$F_p^{(1)}(T,X)=F_p(T^{(1)}, X), $$ 
and 
$$\widetilde F_p^{(1)}(T,X)= X^{-{\textfrak e}_p^{(1)}(T)} F_p^{(1)}(T,p^{-(n+1)/2}X).$$
Let $B$ be  a half-integral matrix $B$ over ${\bf Z}_p$ of degree $m.$ Let $p \not=2.$ Then 
 $$\widetilde F_p^{(1)}(B,X)=\widetilde F_p(1 \bot B,X).$$
 Let $p=2.$ Then  
\[
\widetilde F_2^{(1)}(B,X)=
\left\{
\begin{array}{cl}
\widetilde F_2({\mattwo(1;1/2;1/2;{(a+1)/4})} \bot B',X) &  
{
\begin{array}{ll}
{\rm if} \ B=a \bot 4B' \\
\hspace{0.5cm} {\rm with} \ a \equiv -1 \ {\rm mod} \ 4,
\end{array}
} \\[5mm]
\widetilde F_2(1 \bot B',X) 
& {\rm if} \ B=4B'. 
\end{array}
\right.
\]
Now for each $T \in S_m({\bf Z}_p)_e^{\times}$ put 
$$F_p^{(0)}(T,X)=F_p(2^{-\delta_{2,p}}T,X)$$ and 
$$\widetilde F_p^{(0)}(T,X)=\widetilde F_p(2^{-\delta_{2,p}}T,X).$$
We define $[{\textfrak d}_T^{(0)}]$ and ${\textfrak e}^{(0)}_T$ as $[{\textfrak d}_{2^{-\delta_{2,p}}T}]$ and ${\textfrak e}_{2^{-1}T},$ respectively. We note that  $(-1)^{m/2}\det T= d p^{2{\textfrak e}_T^{(0)}} \ {\rm mod} \ {{\bf Z}_p^*}^{\Box}$ for any $d \in [{\textfrak d}_T^{(0)}].$ 

Now let $m$ and $l$ be positive integers such that $m \ge l.$ Then for  non-degenerate symmetric matrices $A$ and  $B$  of degree $m$ and $l$ respectively with entries in ${\bf Z}_p$  we define the local density $\alpha_p(A,B)$ and the primitive local density $\beta_p(A,B)$ representing $B$ by $A$ as
$$\alpha_p(A,B)=2^{-\delta_{m,l}}\lim_{a \rightarrow
\infty}p^{a(-ml+l(l+1)/2)}\#{\mathcal A}_a(A,B),$$
 and
 $$\beta_p(A,B)=2^{-\delta_{m,l}}\lim_{a \rightarrow
\infty}p^{a(-ml+l(l+1)/2)}\#{\mathcal B}_a(A,B),$$
where $${\mathcal A}_a(A,B)=\{X \in
M_{ml}({\bf Z}_p)/p^aM_{ml}({\bf Z}_p) \ | \ A[X]-B \in p^aS_l({\bf Z}_p)_e \},$$
and 
$${\mathcal B}_a(A,B)=\{X \in {\mathcal A}_a(A,B) \ | \ 
  {\rm rank}_{{\bf Z}_p/p{\bf Z}_p} (X \ {\rm mod} \ p) =l \}.$$
In particular we write $\alpha_p(A)=\alpha_p(A,A).$ 
Furthermore put 
$$M(A)=\sum_{A' \in {\mathcal G}(A)} {1 \over e(A')}$$
for a positive definite symmetric matrix $A$ of degree $n-1$ with entries in ${\bf Z},$ where ${\mathcal G}(A)$ denotes the set of $SL_{n-1}({\bf Z})$-equivalence classes belonging to the genus of $A.$   Then
by Siegel's main theorem on the quadratic forms, we obtain 
$$M(A)=2^{2-n}e_{n-1}\kappa_{n-1} \det A^{n/2} \prod_p \alpha_p(A)^{-1}$$
where $e_{n-1}=1$ or $2$ according as $n=2$ or not, and 
$$\kappa_{n-1}=\prod_{i=1}^{(n-2)/2}\Gamma_{\bf C}(2i)$$
(cf. Theorem 6.8.1 in \cite{Ki2}
).  Put  
$${\mathcal F}_{p}=\{d_0 \in {\bf Z}_p \ | \ \nu_p(d_0) \le 1\}$$ 
 if $p$ is an odd prime, and  
$${\mathcal F}_{2}=\{d_0 \in {\bf Z}_2 \ | \  d_0 \equiv 1 \ {\rm mod} \ 4, \ {\rm  or} \  d_0/4  \equiv -1 \  {\rm mod} \ 4,  \ {\rm or} \ \nu_2(d_0)=3 \}.$$
For $d \in {\bf Z}_p^{\times}$ put 
 \begin{eqnarray*}
\lefteqn{S_{m}({\bf Z}_p,d)}\\
=&\{T \in S_{m}({\bf Z}_p) \ |\ (-1)^{[(m+1)/2]} \det T=p^{2i}d \ {\rm mod} \ {{\bf Z}_p^*}^{\Box} \ {\rm with \ some \ } \ i \in {\bf Z} \}, 
\end{eqnarray*}
and $S_{m}({\bf Z}_p,d)_x=S_{m}({\bf Z}_p,d) \cap S_{m}({\bf Z}_p)_x$ for $x=e$ or $o.$   We note that $S_{m}({\bf Z}_p,d)=S_{m}({\bf Z}_p,p^jd)$ for any even integer $j.$  If $m$ is even, put ${\mathcal L}_{m,p}^{(0)}=S_{m}({\bf Z}_p)_e^{\times}$ and ${\mathcal L}_{m-1,p}^{(1)}=({\mathcal L}_{m-1,p}')^{\times}.$   We also define ${\mathcal L}_{m-j,p}^{(j)}(d)= S_{m-j}({\bf Z}_p,d) \cap {\mathcal L}_{m-j,p}^{(j)}$  for $j=0,1.$ 
Let $m$ be an even integer. For $d_0 \in {\mathcal F}_p,l=0,1$ and $j=0,1,$ define a rational number $\kappa(d_0,m-j,l)=\kappa_p(d_0,m-j,l)$  as
 $$\kappa(d_0,m-j,l)=\left\{\begin{array}{ll}
 \{(-1)^{lm(m-2)/8} 2^{-(m-2)(m-1)/2}\}^{\delta_{2,p}} &  {}\\
 {} \hspace{2mm} \times ((-1)^{m/2},(-1)^{m/2}d_0 )_p^l\,\, p^{-(m/2-1)l\nu(d_0)} & \ {\rm if} \ j=1 \\ 
 { } & {} \\
\{(-1)^{m(m+2)/8}\,((-1)^{m/2}2,\,d_0)_2\}^{l\delta_{2,p}} & \ {\rm if} \  j=0.
\end{array}
\right. $$
Let $\iota_{m,p}$ be the constant function on ${\mathcal L}_{m,p}^{\times}$ taking the value 1, and $\varepsilon_{m,p}$ the function on ${\mathcal L}_{m,p}^{\times}$ assigning the Hasse invariant of $A$ for $A \in {\mathcal L}_{m,p}^{\times}.$  We sometimes drop the suffix and write $\iota_{m,p}$ as $\iota_p$ or $\iota$ and so on if there is no fear of confusion. 
From now on we sometimes write $\omega=\varepsilon^l$ with $l=0$ or $1$ according as $\omega=\iota$ or $\varepsilon.$ Let $n$ be an even integer. For $d_0 \in {\mathcal F}_{p}$ and  $\omega=\varepsilon^l$ with $l=0,1$ we define a formal power series $H_{n-1,p}(d_0,\omega,X,Y,t) \in {\bf C}[X,X^{-1},Y,Y^{-1}][[t]]$ by 
\begin{eqnarray*}
\lefteqn{
H_{n-1,p}(d_0,\omega,X,Y,t)=\kappa(d_0,n-1,l)^{-1}t^{\delta_{2,p}(2-n)}
} \\
&\times & \sum_{A \in {\mathcal L}_{n-1,p}^{(1)}(d_0)/GL_{n-1}({\bf Z}_p)} {\widetilde F_p^{(1)}(A,X)\widetilde F_p^{(1)}(A,Y) \over \alpha_p(A)} \varepsilon(A)^l t^{\nu_p(\det A)}.
\end{eqnarray*}
We call $H_{n-1,p}(d_0,\omega,X,Y,t)$ a formal power series of Rankin-Selberg type. An explicit formula for $H_{n-1,p}(d_0,\omega_p,X,Y,t)$ will be given in the next section.  
Let ${\mathcal F}$ denote the set of fundamental discriminants, and for $l=\pm 1,$ put 
$${\mathcal F}^{(l)}=\{ d_0 \in {\mathcal F} \  | \ ld_0 >0 \}.$$
Now let $h$ be a Hecke eigenform in ${\textfrak S}_{k-n/2+1/2}^+(\varGamma_0(4)),$ and $ f,I_n(h),\phi_{I_n(h),1}$ and $\sigma_{n-1}(\phi_{I_n(h),1})$ be as in Section 3. Then we have 

\begin{thm}
 Let the notation and the assumption be as above.  Then for ${\rm Re}(s) \gg 0,$ we have 
$$R(s,\sigma_{n-1}(\phi_{I_n(h),1})) ={e_{n-1} \over 2}\kappa_{n-1}2^{(-s-1/2)(n-2)}$$
$$ \times \{ \sum_{d_0 \in {\mathcal F}^{((-1)^{n/2})} }|c_h(|d_0|)|^2|d_0|^{n/2-k+1/2} 
\prod_p H_{n-1,p}(d_0,\iota_p,\alpha_p,\alpha_p,p^{-s+k-1/2})  $$
$$+ (-1)^{n(n-2)/8}\sum_{d_0 \in {\mathcal F}^{((-1)^{n/2})} }|c_h(|d_0|)|^2|d_0|^{-k+3/2} \prod_p H_{n-1,p}(d_0,\varepsilon_p,\alpha_p,\alpha_p,p^{-s+k-1/2})\},$$
where $\alpha_p$ is the Satake $p$-parameter of $f$.
\end{thm}
\begin{proof}
Let $T \in ({\mathcal L}'_{n-1})_{>0}.$ Then it follows from Lemma 4.1 that the $T$-th Fourier coefficient $c_{\sigma_{n-1}(\phi_{I_n(h),1})}(T)$  of $\sigma_{n-1}(\phi_{I_n(h),1})$ is uniquely determined by the genus to which $T$ belongs, and, by definition, it can be expressed as
$$c_{\sigma_{n-1}(\phi_{I_n(h),1})}(T)=c_{I_n(h)}(T^{(1)})=c_h(|{\textfrak d}_T^{(1)}|)({\textfrak f}_T^{(1)})^{k-n/2-1/2}\prod_p \widetilde F^{(1)}(T,\alpha_p).$$
We also note that $\prod_p((-1)^{n/2},(-1)^{n/2}d_0)_p =1$ for any $d_0 \in {\mathcal F}^{((-1)^{n/2})},$ and hence
$$\prod_p \kappa_p(d_0,n-1,0)=2^{-(n-2)(n-1)/2}$$
and
$$\prod_p \kappa_p(d_0,n-1,1)=2^{-(n-2)(n-1)/2} (-1)^{n(n-2)/8}|d_0|^{-n/2+1}.$$
 Thus, by using the same method as in Proposition 2.2 of \cite{I-S}, similarly to \cite{I-K1}, Theorem 3.3, (1), and \cite{I-K2}, Theorem 3.2,  we obtain the assertion.
 \end{proof}

\section{Formal power series associated with local Siegel series}

Throughout this section we fix a positive even integer $n.$ 
 In this section we give an explicit formula of $H_{n-1}(d_0,\omega,X,Y,t)=H_{n-1,p}(d_0,\omega,X,Y,t)$  for $\omega=\iota, \varepsilon$ (cf. Theorem 5.3.1). The idea is to rewrite $H_{n-1}(d_0,\omega,X,Y,t)$ in terms of various power series. 
  Henceforth, for a $GL_m({\bf Z}_p)$-stable subset ${\mathcal B}$ of $S_m({\bf Q}_p),$ we simply write  $\sum_{T \in {\mathcal B}}$ instead of $\sum_{T \in {\mathcal B}/\sim}$ if there is no fear of confusion. We also simply write $\nu_p$ as $\nu$ and the others if the prime number $p$ is clear from the context.

\subsection{Formal power series of Andrianov type}

\noindent
  { }
  
  \bigskip

For an $m \times m$ half-integral matrix $B$ over ${\bf Z}_p,$ let $(\overline{W},\overline {q})$ denote the quadratic space over ${\bf Z}_p/p{\bf Z}_p$ defined by the quadratic form $\overline {q}({\bf x})=B[{\bf x}] \ {\rm mod} \ p,$ and define the radical $R(\overline {W})$ of $\overline {W}$ by
$$R(\overline {W})=\{{\bf x} \in \overline {W} \ | \ \overline {B}({\bf x},{\bf y})=0 \ {\rm for \ any } \ {\bf y} \in \overline {W} \},$$
where $\overline {B}$ denotes the associated symmetric bilinear form of $\overline {q}.$ 
We then put $l_p(B)= {\rm rank}_{{\bf Z}_p/p{\bf Z}_p} R(\overline {W})^{\perp},$ where $R(\overline {W})^{\perp}$ is the orthogonal complement of $R(\overline {W})^{\perp}$ in $\overline {W}.$ Furthermore, in case $l_p(B)$ is even,  put $\overline {\xi}_p(B)=1$ or $-1$ according as $R(\overline {W})^{\perp}$ is hyperbolic or not. In case $l_p(B)$ is odd, we put $\overline {\xi}_p(B)=0.$ Here we make the convention that $\xi_p(B)=1$ if $l_p(B)=0.$ We note that $\overline {\xi}_p(B)$ is different from the $\xi_p(B)$ in general, but they coincide if $B \in {\mathcal L}_{m,p} \cap {1 \over 2}GL_m({\bf Z}_p).$  
 
Let $m$ be a positive even integer.
For $B \in {\mathcal L}_{m-1,p}^{(1)}$ put $B^{(1)}=\mattwo(1;r/2; {}^tr/2;{(B+{}^trr)/4}),$ where $r$ is an element of ${\bf Z}_p^{m-1}$ such that $B+{}^trr \in 4{\mathcal L}_{m-1,p}.$ Then we put
$\xi^{(1)}(B)=\xi(B^{(1)})$ and $\overline {\xi}^{(1)}(B)=\overline {\xi}(B^{(1)}).$ These do not depend on the choice of $r,$ and 
we have $\xi^{(1)}(B)=\chi((-1)^{m/2}\det B).$ 
Let $p \not=2.$ Then an element $B$ of ${\mathcal L}_{m-1,p}^{(1)}$ is equivalent, over ${\bf Z}_p$, to $\Theta \bot pB_1$
 with $\Theta \in GL_{m-n_1-1}({\bf Z}_p) \cap S_{m-n_1-1}({\bf Z}_p)$ and $B_1 \in S_{n_1}({\bf Z}_p)^{\times}.$ Then $\overline {\xi}(B)=0$ if $n_1$ is odd, and $\overline {\xi}^{(1)}(B)=\chi((-1)^{(m-n_1)/2} \det \Theta)$ if $n_1$ is even.
Let $p=2.$ Then an element $B \in {\mathcal L}_{m-1,2}^{(1)}$ is equivalent, over ${\bf Z}_2,$  to a matrix of the form $2\Theta \bot B_1,$ where $\Theta \in GL_{m-n_1-2}({\bf Z}_2) \cap S_{m-n_1-2}({\bf Z}_2)_e$ and $B_1$ is one of the following three types: \begin{enumerate}
\item[(I)]
 $B_1=a  \bot 4B_2$ with $a \equiv -1 \ {\rm mod} \ 4, $ and $B_2 \in S_{n_1}({\bf Z}_2)_e^{\times}$; \vspace*{1mm}

\item[(II)]
 $B_1 \in  4S_{n_1+1}({\bf Z}_2)^{\times}$; \vspace*{1mm} 

\item[(III)]
 $B_1=a  \bot 4B_2$ with $a \equiv -1 \ {\rm mod} \ 4, $ and $B_2 \in S_{n_1}({\bf Z}_2)_o.$
 
 \end{enumerate}
Then  $\overline {\xi}^{(1)}(B)=0$ if $B_1$ is of type (II) or type (III). Let $B_1$ be of type (I). Then $(-1)^{(m-n_1)/2} a \det \Theta$  mod $({\bf Z}_2^*)^{\Box}$ is uniquely determined by $B$  and we have
\[\overline {\xi}^{(1)}(B)=\chi((-1)^{(m-n_1)/2} a \det \Theta).\] 
Suppose that $p \neq 2,$ and let ${\mathcal U}={\mathcal U}_p$  be a complete set of representatives of ${\bf Z}_p^*/({\bf Z}_p^*)^{\Box}.$ Then, for each positive integer $l$ and $d \in {\mathcal U}_{p}$, there exists a unique, up to ${\bf Z}_p$-equivalence, element of $S_{l}({\bf Z}_p)  \cap GL_{l}({\bf Z}_p)$ whose determinant is $(-1)^{[(l+1)/2]}d,$ which will be denoted by  $\Theta_{l,d}.$ 
Suppose that $p=2,$ and put  ${\mathcal U}={\mathcal U}_{2}=\{1 ,5 \}.$ Then for each positive even integer $l$ and $d \in {\mathcal U}_{2}$ there exists a unique, up to ${\bf Z}_2$-equivalence, element of $S_{l}({\bf Z}_2)_{e} \cap GL_{l}({\bf Z}_2)$ whose determinant is  $(-1)^{l/2} d,$ which will be also denoted by  $\Theta_{l,d}.$ In particular, if $p$ is any prime number and $l$ is even, we put $\Theta_l=\Theta_{l,1}$ We make the convention that $\Theta_{l,d}$ is the empty matrix if $l=0.$ For an element $d \in {\mathcal U}$ we use the same symbol $d$ to denote the coset $d$ mod  $({\bf Z}_p^*)^{\Box}.$

We put  ${\mathcal D}_{l,i}=GL_n({\bf Z}_p) \mattwo(1_{l-i};0;0;p1_i) GL_l({\bf Z}_p)$ for $0 \le i \le l.$ 
Suppose that $r$ is a positive even integer. For $j=0,1,\xi=\pm 1$ and  $T \in {\mathcal L}_{r-j,p}^{(j)},$  we define a  polynomial $\widetilde F_p^{(j)}(T,\xi,X)$ in $X$ and $X^{-1}$ by
$$\widetilde F_p^{(j)}(T,\xi,X)=X^{-{\textfrak e}^{(j)}(T)}F_p^{(j)}(T,\xi X).$$
We note that $\widetilde F_p^{(j)}(T,\xi,X)=\xi^{e^{(j)}(T)}\widetilde F_p^{(j)}(T,\xi X),$ and in particular 
$\widetilde F_p^{(j)}(T,1,X)$ coincides with $\widetilde F_p^{(j)}(T,X).$ We also define a polynomial $\widetilde G_p^{(j)}(T,\xi,X,t)$ in $X,X^{-1}$ and $t$ by
$$\widetilde G_p^{(j)}(T,\xi,X,t)=\sum_{i=0}^{r-j} (-1)^i p^{i(i-1)/2}t^i \sum_{D \in GL_{r-j}({\bf Z}_p) \backslash {\mathcal D}_{r-j,i}} \widetilde F^{(j)}_p(T[D^{-1}],\xi,X), $$
and put $\widetilde G_p^{(j)}(T,X,t)=\widetilde G_p^{(j)}(T,1,X,t).$
  We also define a polynomial $G_p^{(j)}(T,X)$ in $X$ by
\begin{eqnarray*}
\lefteqn{
G_p^{(j)}(T,X)} \\
&=&\sum_{i=0}^{r-j} (-1)^i p^{i(i-1)/2} (X^2p^{r+1-j})^i \sum_{D \in GL_{r-j}({\bf Z}_p) \backslash {\mathcal D}_{r-j,i}} F_p^{(j)}(T[D^{-1}],X). 
\end{eqnarray*}
We note that
$$\widetilde G_p^{(j)}(T,X,1)= X^{-{\textfrak e}^{(j)}(T)} G_p^{(j)}(T,Xp^{-(m+1)/2}).$$

 Now for an element $T \in {\mathcal L}_{r-1,p}^{(1)}$ we define a polynomial $B_p^{(1)}(T,t)$ in $t$ by  
$$B_p^{(1)}(T,t)={(1-\xi_p(T^{(1)})p^{-r/2+1/2}t)\prod_{i=1}^{(r-2)/2}(1-p^{-2i+1}t^2) \over G_p^{(1)}(T,p^{-r+1/2}t)}.$$
Then by [\cite{K-K4}, Lemma 4.2.1] we have the following:

 
\begin{lems}
Let $n$ be the fixed positive even integer. Let $B \in {\mathcal L}_{n-1,p}^{(1)}.$   \vspace*{1mm}

\noindent
{\rm (1)} Let $p \not=2, $ and suppose that  $B=\Theta_{n-n_1-1,d} \bot pB_1$  with $d \in {\mathcal U}$ and $B_1 \in S_{n_1}({\bf Z}_p)^{\times}.$ 
Then 
\[
B_p^{(1)}(B,t)=
\left\{
\begin{array}{cl}
\displaystyle (1-\overline{\xi}^{(1)}(B) p^{(n_1-n+1)/2}t)\prod_{i=1}^{(n-n_1-2)/2}(1-p^{-2i+1}t^2) & 
{\rm if} \  n_1 \ {\rm even}, \\[2mm]
\displaystyle \prod_{i=1}^{(n-n_1-1)/2}(1-p^{-2i+1}t^2) &
{\rm if} \  n_1 \ {\rm odd}. 
\end{array}\right.
\]
\noindent
{\rm (2)} Let $p=2,$ and suppose that $B=2\Theta \bot B_1 \in {{\mathcal L}_{n-1,2}'}$  with $\Theta \in S_{n-n_1-2}({\bf Z}_2)_e \cap  GL_{n-n_1-2}({\bf Z}_2) $ and $B_1 \in S_{n_1+1}({\bf Z}_2)^{\times}.$  Then 

\begin{eqnarray*}
\lefteqn{B_p^{(1)}(B,t)} \\
&=&
\left\{
\begin{array}{cl}
\displaystyle (1-\overline{\xi}^{(1)}(B) p^{(n_1-n+1)/2}t)\prod_{i=1}^{(n-n_1-2)/2}(1-p^{-2i+1}t^2) & 
{\rm if} \  B_1 \ {\rm is \ of \ type \ (I)}, \\[2mm]
\displaystyle \prod_{i=1}^{(n-n_1-2)/2}(1-p^{-2i+1}t^2) & 
{\rm if} \  B_1 \ {\rm is \ of \ type \ (II) \ or \ (III)}. 
\end{array}\right. 
\end{eqnarray*}
\end{lems}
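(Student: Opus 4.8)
The plan is to read off the formula for $B_p^{(1)}(B,t)$ from the explicit shape of $G_p^{(1)}(B,Y)$ furnished by Lemma 5.2.1, by substituting $Y=p^{-n+1/2}t$ into the defining relation
\[
B_p^{(1)}(B,t)=\frac{\bigl(1-\xi_p(B^{(1)})p^{-n/2+1/2}t\bigr)\prod_{i=1}^{(n-2)/2}\bigl(1-p^{-2i+1}t^2\bigr)}{G_p^{(1)}(B,p^{-n+1/2}t)}
\]
(here the parameter $r$ in the definition of $B_p^{(1)}$ equals $n$, since $B\in{\mathcal L}_{n-1,p}^{(1)}$) and then cancelling common factors. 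The first point to record is that, by the discussion preceding Lemma 5.2.1, $\xi_p(B^{(1)})=\xi^{(1)}(B)=\chi((-1)^{n/2}\det B)=\xi_0$; since $p^{n/2}\cdot p^{-n+1/2}=p^{-n/2+1/2}$, the linear factor $1-\xi_0 p^{-n/2+1/2}t$ in the numerator is exactly the image under $Y\mapsto p^{-n+1/2}t$ of the factor $1-\xi_0 p^{n/2}Y$ occurring in every case of Lemma 5.2.1, so it cancels at once. The same substitution sends $\prod(1-p^{2i+n}Y^2)$ to $\prod(1-p^{2i-n+1}t^2)$ and the half-factor $1+p^{n_1/2+n/2}\overline{\xi}^{(1)}(B)Y$ — present precisely when $p$ is odd and $n_1$ is even and positive, or when $p=2$ and $B_1$ is of type (I) — to $1+\overline{\xi}^{(1)}(B)p^{(n_1-n+1)/2}t$.

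Next I would carry out the bookkeeping on the remaining $t^2$-products, treating together the two ``half-factor'' cases ($p$ odd with $n_1$ even; $p=2$ with $B_1$ of type (I)) and the ``no half-factor'' cases ($p$ odd with $n_1$ odd; $p=2$ with $B_1$ of type (II) or (III)). The exponents $-2i+1$ in the numerator $\prod_{i=1}^{(n-2)/2}(1-p^{-2i+1}t^2)$ run over the odd negative integers $-1,-3,\dots,-(n-3)$; after cancelling $1-\xi_0 p^{-n/2+1/2}t$, the denominator $t^2$-factors have exponents $-(n-3),-(n-5),\dots$, running down to $-(n-n_1+1)$ in the half-factor cases, to $-(n-n_1)$ when $n_1$ is odd, and to $-(n-n_1-1)$ in types (II), (III). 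In each case these form a terminal segment of the numerator exponent list, so the division is exact and leaves $\prod_{i=1}^{N}(1-p^{-2i+1}t^2)$ with $N=(n-n_1)/2$, $(n-n_1-1)/2$, and $(n-n_1-2)/2$ respectively. In the half-factor cases I would then invoke $\overline{\xi}^{(1)}(B)^2=1$ together with the factorization
\[
1-p^{-(n-n_1-1)}t^2=\bigl(1-p^{(n_1-n+1)/2}t\bigr)\bigl(1+p^{(n_1-n+1)/2}t\bigr),
\]
so that dividing the $i=(n-n_1)/2$ numerator factor by $1+\overline{\xi}^{(1)}(B)p^{(n_1-n+1)/2}t$ produces $1-\overline{\xi}^{(1)}(B)p^{(n_1-n+1)/2}t$ and lowers the surviving product range from $(n-n_1)/2$ to $(n-n_1-2)/2$; this yields exactly the asserted expressions. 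The degenerate instance $n_1=0$ falls under the half-factor cases, using that then $B^{(1)}$ is unimodular of full $p$-rank, so $\overline{\xi}^{(1)}(B)=\xi_p(B^{(1)})=\xi_0$.

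I do not expect a genuine obstacle: once Lemma 5.2.1 is available, the argument is a substitution followed by elementary cancellation of binomial-type factors, and the fact that $B_p^{(1)}(B,t)$ is actually a polynomial drops out of the exponent-segment observation. The step demanding the most care is the prime $p=2$, where one must keep the trichotomy (I)/(II)/(III) for $B_1$ in Lemma 5.2.1(2) aligned with the corresponding cases of the present lemma, and in particular remember that $\overline{\xi}^{(1)}(B)=0$ for types (II) and (III), so that no half-factor is generated and the surviving product index shifts to $(n-n_1-2)/2$ exactly as stated.
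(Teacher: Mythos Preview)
Your proposal is correct and is exactly the approach the paper takes: the paper simply states that Lemma 5.2.3 follows ``by Lemma 5.2.1,'' and your argument is precisely the substitution $Y=p^{-n+1/2}t$ into the formulas of Lemma 5.2.1 followed by the straightforward cancellation of linear and quadratic factors that this entails. Your bookkeeping on the exponent ranges and the handling of the $n_1=0$ and $p=2$ cases are accurate.
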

  
Let $m$ be a positive even integer and $j=0,1.$ For a non-degenerate half-integral matrix $T$ over ${\bf Z}_p$ of degree $m-j,$  put
$$R^{(j)}(T,X,t)=\sum_{W \in M_{m-j}({\bf Z}_p)^{\times}/GL_{m-j}({\bf Z}_p)}  \widetilde F_p^{(j)}(T[W],X)t^{\nu(\det W)}.$$
This type of formal power series  was first introduced by Andrianov \cite{A} to study the standard $L$-function of Siegel modular form of integral weight. Therefore we call it the formal power series of Andrianov type. (See also B{\"o}cherer \cite{Bo}.)
The following proposition follows from [\cite{K-K4}, Lemma 4.1.1 (1) ].  
\begin{props}
Let $m$ be a positive even integer and $j=0$ or $1.$  Let $T \in {\mathcal L}_{m-j,p}^{(j)}.$ Then  
 $$\sum_{B \in {\mathcal L}_{m-j,p}^{(j)}} {\widetilde F^{(j)}_p(B,X)\alpha_p(T,B) \over \alpha_p(B)}t^{\nu(\det B)}= t^{\nu(\det T)}R^{(j)}(T,X,p^{-m+j}t^2).$$
\end{props}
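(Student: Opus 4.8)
The plan is to expand the left-hand side by splitting the sum over $B \in {\mathcal L}_{m-j,p}^{(j)}/GL_{m-j}({\bf Z}_p)$ according to the $GL_{m-j}({\bf Z}_p)$-equivalence class of $B$ under the action $B \mapsto T[W]$ with $W \in M_{m-j}({\bf Z}_p)^{\times}$. First I would invoke part (1) of Lemma 5.1.1, which identifies $\alpha_p(T,B)/\alpha_p(B)$ with $\#(\Omega(T,B)/GL_{m-j}({\bf Z}_p)) \cdot p^{-(m-j)(\nu(\det B)-\nu(\det T))/2}$, where $\Omega(T,B) = \{W \in M_{m-j}({\bf Z}_p)^{\times} \mid T[W] \sim B\}$. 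Substituting this into the left-hand side converts each summand into a count of $GL_{m-j}({\bf Z}_p)$-orbits of such $W$, weighted by $\widetilde F_p^{(j)}(B,X)$ and by the appropriate power of $p$ and $t$.

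The next step is to reorganize the resulting double sum as a single sum over $W \in M_{m-j}({\bf Z}_p)^{\times}/GL_{m-j}({\bf Z}_p)$: each orbit of $W$ contributes, via $B = T[W]$, the term $\widetilde F_p^{(j)}(T[W],X) \cdot p^{-(m-j)(\nu(\det (T[W]))-\nu(\det T))/2} \cdot t^{\nu(\det (T[W]))}$. Here one uses that $\widetilde F_p^{(j)}$ is a class invariant, so $\widetilde F_p^{(j)}(B,X)$ may be replaced by $\widetilde F_p^{(j)}(T[W],X)$, and that $T[W] \in {\mathcal L}_{m-j,p}^{(j)}$ automatically since $W$ ranges over all non-degenerate integral matrices and $T[W] \sim B$ lies in ${\mathcal L}_{m-j,p}^{(j)}$ (this uses the stability of ${\mathcal L}_{m-j,p}^{(j)}$ under $GL_{m-j}({\bf Z}_p)$-equivalence, and for $j=1$ the fact that the $T^{(1)}$-construction respects equivalence, i.e.\ Lemma 4.1). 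Then I would use the multiplicativity $\det(T[W]) = (\det W)^2 \det T$, so that $\nu(\det(T[W])) = \nu(\det T) + 2\nu(\det W)$. This makes the power of $p$ equal to $p^{-(m-j)\nu(\det W)}$ and the power of $t$ equal to $t^{\nu(\det T)} \cdot t^{2\nu(\det W)}$, so that every term factors as $t^{\nu(\det T)}$ times $\widetilde F_p^{(j)}(T[W],X)\,(p^{-(m-j)}t^2)^{\nu(\det W)}$. Summing over $W$ and comparing with the definition of $R^{(j)}(T,X,\cdot)$ gives exactly $t^{\nu(\det T)} R^{(j)}(T,X,p^{-m+j}t^2)$.

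The only subtle point is bookkeeping for the change of index set: one must check that distinct orbits $[W]$ in $M_{m-j}({\bf Z}_p)^{\times}/GL_{m-j}({\bf Z}_p)$ correspond bijectively, under $W \mapsto ([T[W]], [W] \text{ within } \Omega(T,T[W])/GL)$, to the pairs $(B,[W])$ appearing after applying Lemma 5.1.1 — but this is immediate because $\Omega(T,B)$ is precisely the preimage of the class of $B$, so reassembling over all $B$ recovers all of $M_{m-j}({\bf Z}_p)^{\times}$. I do not expect any real obstacle here; the argument is a formal rearrangement once Lemma 5.1.1(1) and the class-invariance of $\widetilde F_p^{(j)}$ are in hand. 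The one place to be careful is that $\widetilde F_p^{(j)}$ depends on $T$ not only through its $GL_{m-j}({\bf Z}_p)$-class but correctly factors through ${\textfrak e}^{(j)}$ as well; since ${\textfrak e}^{(j)}(T[W]) = {\textfrak e}^{(j)}(T)$ (the quantity $[{\textfrak d}^{(j)}]$ and ${\textfrak e}^{(j)}$ are $GL$-class invariants by construction in Section 4), no extra $X$-power discrepancy arises, and the identity holds as stated.
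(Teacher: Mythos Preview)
Your argument is correct and is exactly what the paper does: it simply says the proposition ``follows from (1) of Lemma 5.1.1,'' and your unpacking---substitute the formula for $\alpha_p(T,B)/\alpha_p(B)$, reassemble the sum over equivalence classes $B$ into a sum over $W \in M_{m-j}({\bf Z}_p)^{\times}/GL_{m-j}({\bf Z}_p)$, and use $\nu(\det T[W]) = \nu(\det T) + 2\nu(\det W)$---is precisely the intended computation.

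One small correction to your final paragraph: the claim ${\textfrak e}^{(j)}(T[W]) = {\textfrak e}^{(j)}(T)$ is false; since $\det(T[W]) = (\det W)^2 \det T$ and the discriminant class $[{\textfrak d}^{(j)}]$ is unchanged, one has ${\textfrak e}^{(j)}(T[W]) = {\textfrak e}^{(j)}(T) + \nu(\det W)$. Fortunately this plays no role: the proof only needs that $\widetilde F_p^{(j)}(\,\cdot\,,X)$ is a $GL_{m-j}({\bf Z}_p)$-class invariant so that $\widetilde F_p^{(j)}(B,X) = \widetilde F_p^{(j)}(T[W],X)$ when $B \sim T[W]$; it never needs to compare $\widetilde F_p^{(j)}(T[W],X)$ with $\widetilde F_p^{(j)}(T,X)$, so no ``$X$-power discrepancy'' issue arises in the first place.
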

   
The following theorem is due to \cite{K-K3}.  

\begin{thms}
Let $T$ be an element of ${\mathcal L}_{n-1,p}^{(1)}.$ Then 
$$R^{(1)}(T,X,t)={B_p^{(1)}(T,p^{n/2-1}t)\widetilde G_p^{(1)}(T,X,t) \over  \prod_{j=1}^{n-1}(1-p^{j-1}X^{-1}t)(1-p^{j-1}Xt)}.$$
\end{thms}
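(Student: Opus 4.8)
Theorem 5.2.5 asserts a closed form for the Andrianov-type power series $R^{(1)}(T,X,t)$ attached to $T\in{\mathcal L}_{n-1,p}^{(1)}$, namely
$$R^{(1)}(T,X,t)=\frac{B_p^{(1)}(T,p^{n/2-1}t)\,\widetilde G_p^{(1)}(T,X,t)}{\prod_{j=1}^{n-1}(1-p^{j-1}X^{-1}t)(1-p^{j-1}Xt)}.$$

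The statement is (the $j=1$, degree-$n$ instance of) a general result on Andrianov-type power series proved in \cite{K-K3}; I indicate the structure of the argument. The elementary but decisive observation is the identity
$$\widetilde F_p^{(1)}(B,X)=\widetilde F_p(B^{(1)},X)=\widetilde F_p^{(0)}(2B^{(1)},X)\qquad(B\in{\mathcal L}_{n-1,p}^{(1)}),$$
which follows at once from $F_p^{(1)}(B,X)=F_p(B^{(1)},X)$, from ${\textfrak e}_p^{(1)}(B)={\textfrak e}_p(B^{(1)})$, and from the fact that $B^{(1)}$ has degree $n$, so that the $p$-power normalizing $\widetilde F_p(B^{(1)},\cdot)$ is precisely the $p^{-(n+1)/2}$ entering the definition of $\widetilde F_p^{(1)}(B,\cdot)$. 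In the same way $G_p^{(1)}(B,X)=G_p(B^{(1)},X)$ (used already in the proof of Lemma 5.2.1), so all the ingredients of the claimed formula are functions of the degree-$n$ matrix $B^{(1)}$.

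The route I would take is a direct induction on the Jordan decomposition of $T$: for $p$ odd write $T\sim\Theta\bot pT'$ with $\Theta$ unimodular, and for $p=2$ use the three normalized block shapes (I), (II), (III) of Subsection 5.2. One expands $\widetilde F_p^{(1)}(T[W],X)$ by Lemma 5.2.2 (which writes it as a sum of $G_p^{(1)}(B',p^{-(n+1)/2}X)$ over ${\bf Z}_p$-classes $B'$, weighted by $\alpha_p(B',T[W])/\alpha_p(T[W])$), interchanges the two summations, and peels the unimodular block $\Theta$ off $T$ using Lemma 5.1.2 — which describes how the sets $\widetilde\Omega^{(j)}$ transform when a unimodular block is adjoined — together with the local-density identities of Lemmas 5.1.3 and 5.1.4, which evaluate the ratios of $\alpha_p$'s that occur. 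This lowers the degree, and the geometric summation over the ``scaling part'' of $W$ (the $p$-exponents of its elementary divisors) produces the $n-1$ pairs of denominator factors $(1-p^{j-1}X^{-1}t)(1-p^{j-1}Xt)$. The explicit shapes of $G_p^{(1)}$ and $B_p^{(1)}$ supplied by Lemmas 5.2.1 and 5.2.3 — both being determined by $[{\textfrak d}_T]$ and the $p$-rank of $T$ — then let one recognize the accumulated numerator as $B_p^{(1)}(T,p^{n/2-1}t)\,\widetilde G_p^{(1)}(T,X,t)$; the base case $\nu(\det T)=0$ ($T$ unimodular, $T^{(1)}$ unimodular of degree $n$) is a finite, direct computation.

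A more conceptual, but equivalent, route is to pass through the integral-weight case: by the Corollary to Lemma 5.1.2 there is a $\nu(\det)$-preserving bijection $GL_{n-1}({\bf Z}_p)\backslash\widetilde\Omega^{(1)}(T)\simeq GL_{n}({\bf Z}_p)\backslash\widetilde\Omega^{(0)}(2T^{(1)})$ compatible with $T\mapsto T^{(1)}$, which together with the identity above and Lemma 5.1.1 matches $R^{(1)}(T,X,t)$ with the degree-$n$ Andrianov power series of $2T^{(1)}$. For that series the closed form is classical (Andrianov \cite{A}, B{\"o}cherer \cite{Bo}, in the explicit form of \cite{Kat1}), with denominator $\prod_{j=1}^{n}(1-p^{j-1}X^{-1}t)(1-p^{j-1}Xt)$; because $2T^{(1)}$ carries a unimodular $1$ in its upper-left corner, one pair of these factors cancels against the numerator and what remains reorganizes into $B_p^{(1)}(T,p^{n/2-1}t)\,\widetilde G_p^{(1)}(T,X,t)$ over $\prod_{j=1}^{n-1}(1-p^{j-1}X^{-1}t)(1-p^{j-1}Xt)$.

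The main obstacle, in either approach, is the prime $p=2$: the congruences modulo $4$ defining ${\mathcal L}'_{m,2}$ and the auxiliary vectors $r_T$ make the bijections of Lemma 5.1.2 and its Corollary delicate, one must separate the cases $r_T\equiv 0$ and $r_T\not\equiv 0\ {\rm mod}\ 2$ (equivalently, block type (I) versus (II) or (III)), and the circumstance that $\overline\xi_p^{(1)}(T)$ and $\xi_p(T^{(1)})$ differ in general forces the Hasse-invariant and $\xi$-bookkeeping to be carried out exactly as in Subsection 5.2. Everything else is routine manipulation of formal power series in ${\bf C}[X,X^{-1}][[t]]$.
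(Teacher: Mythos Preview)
The paper does not prove this statement at all: it simply attributes Theorem~5.2.5 to \cite{K-K3} and adds the remark that for $p\neq 2$ the formula can be derived from Shimura's Theorem~20.7 (and Zhuravlev), whereas for $p=2$ those classical results cannot be used. Your proposal correctly identifies \cite{K-K3} as the source and then goes further, sketching two possible proofs; so there is nothing to compare against a ``paper's own proof'' beyond the citation and that remark.

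Your first sketch (induction on the Jordan decomposition, peeling off unimodular blocks via Lemmas~5.1.2--5.1.4 and recognizing the numerator from Lemmas~5.2.1 and~5.2.3) is a reasonable outline and in the spirit of what one expects \cite{K-K3} to do.

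Your second, ``more conceptual'' route has a genuine gap. The Corollary to Lemma~5.1.2 gives a bijection between $GL_{n-1}({\bf Z}_p)\backslash\widetilde\Omega^{(1)}(T)$ and $GL_n({\bf Z}_p)\backslash\widetilde\Omega^{(0)}(2T^{(1)})$, but $R^{(1)}(T,X,t)$ is \emph{not} a sum over $\widetilde\Omega^{(1)}(T)$: it is a sum over all of $M_{n-1}({\bf Z}_p)^\times/GL_{n-1}({\bf Z}_p)$, with summand $\widetilde F_p^{(1)}(T[W],X)$. What the elementary identity $(T[W])^{(1)}\sim T^{(1)}[1_1\bot W]$ actually buys you is
\[
R^{(1)}(T,X,t)=\sum_{W\in M_{n-1}({\bf Z}_p)^\times/GL_{n-1}({\bf Z}_p)}\widetilde F_p\bigl(T^{(1)}[\,1_1\bot W\,],X\bigr)\,t^{\nu(\det W)},
\]
which is only a \emph{partial} sum inside the degree-$n$ Andrianov series for $T^{(1)}$ (the $W'=1_1\bot W$ miss, e.g., $W'=p\cdot 1_n$). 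The passage from this partial sum to the full degree-$n$ formula with one denominator pair cancelled is precisely the nontrivial content, and neither the Corollary to Lemma~5.1.2 nor Lemma~5.1.1 supplies it. The paper's own caveat---that the $p=2$ case cannot be obtained from the classical integral-weight results---is a warning in exactly this direction.
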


In \cite{B-S}, B{\"o}cherer and Sato got a similar formula for $T \in {\mathcal L}_{n,p}.$  We note that the above formula for $p \not=2$ can be derived directly from 
Theorem 20.7 in \cite{Sh2}
(see also Zhuravlev \cite{Zh}). However, we note that we cannot use their  results  to prove the above formula for  $p=2.$

Then for $d_0 \in {\mathcal F}_p$ and   $\omega=\varepsilon^l$with $l=0,1,$  we define a formal power series $\widetilde R_{n-1}(d_0,\omega,X,Y,t)$ in $t$ by 
$$\widetilde R_{n-1}(d_0,\omega,X,Y,t)=\kappa(d_0,n-1,l)^{-1} t^{\delta_{2,p}(2-n)} \hspace*{-2.5mm}\sum_{B' \in {\mathcal L}_{n-1,p}^{(1)}(d_0)}\hspace*{-2.5mm} { \widetilde G_p^{(1)}(B',X,p^{-n}Yt^2)   \over \alpha_p(B') } $$
$$ \times Y^{-{\textfrak e}^{(1)}(B')} t^{\nu(\det B')} B_p^{(1)}(B',p^{-n/2-1}Yt^2) G_p^{(1)}(B',p^{-(n+1)/2}Y)\omega(B').$$
More precisely this is an element of ${\bf C}[X,X^{-1},Y^{1/2},Y^{-1/2}][[t]].$ 
 Now by Theorem 5,2,5, we can rewrite $H_{n-1}(\omega,d_0,X,Y,t)$ in terms of $\widetilde R_{n-1}(d_0,\omega,X,Y,t)$ in the following way: 

 \begin{thms}
 For $\omega=\varepsilon^l,$ we have 
 $$H_{n-1}(d_0,\omega,X,Y,t)={\widetilde R_{n-1}(d_0,\omega,X,Y,t) \over \prod_{j=1}^n(1-p^{j-1-n}XYt^2)(1-p^{j-1-n}X^{-1}Yt^2)}.$$
\end{thms}

 \begin{proof}  By [\cite{K-K4}, Lemma 4.2.2], we have
 \begin{eqnarray*}
\lefteqn{ \kappa(d_0,n-1,l)t^{\delta_{2,p}(n-2)}H_{n-1}(d_0,\omega,X,Y,t)=\sum_{B \in {\mathcal L}_{n-1,p}^{(1)}(d_0)}{\widetilde F_p^{(1)}(B,X) \over \alpha_p(B)} \omega(B)t^{\nu(\det B)} } \\
&& \hspace*{-3mm}\times \sum_{B' \in {\mathcal L}_{n-1,p}^{(1)}} {Y^{-{\textfrak e}^{(1)}(B')}G_p^{(1)}(B',p^{-(n+1)/2}Y) \alpha_p(B',B) \over \alpha_p(B')} (p^{-1}Y)^{(\nu(\det B)-\nu(\det B'))/2}.
\end{eqnarray*}
Let $B$ and $B'$ be elements of ${\mathcal L}_{n-1,p}^{(1)},$ and suppose that $\alpha_p(B',B) \not=0.$ Then we note that $B \in {\mathcal L}_{n-1,p}^{(1)}(d_0)$ if and only if $B' \in {\mathcal L}_{n-1,p}^{(1)}(d_0).$ Hence by Proposition 5.1.2 and Theorem 5.1.3 we have 
 \begin{eqnarray*}
\lefteqn{\kappa(d_0,n-1,l)t^{\delta_{2,p}(n-2)} H_{n-1}(d_0,\omega,X,Y,t)} \\
&=& \sum_{B' \in {\mathcal L}_{n-1,p}^{(1)}(d_0)} \hspace*{-2.5mm}{G_p^{(1)}(B',p^{-(n+1)/2}Y)Y^{-{\textfrak e}^{(1)}(B')} \over \alpha_p(B') } (pY^{-1})^{\nu(\det B')/2}\omega(B') \\
&& \times \sum_{B \in {\mathcal L}_{n-1,p}^{(1)}}  \hspace*{-2.5mm}{\widetilde F_p^{(1)}(B,X) \alpha_p(B',B) \over \alpha_p(B)}(t^2p^{-1}Y)^{\nu(\det B)/2} \\
&=&\sum_{B' \in {\mathcal L}_{n-1,p}^{(1)}(d_0)} { G_p^{(1)}(B',p^{-(n+1)/2}Y)Y^{-{\textfrak e}^{(1)}(B')} \over \alpha_p(B') } t^{\nu(\det B')}\omega(B')R^{(1)}(B',X,t^2Yp^{-n}) \\
&=&\sum_{B' \in {\mathcal L}_{n-1,p}^{(1)}(d_0)} { \widetilde G_p^{(1)}(B',X,p^{-n}Yt^2)   \over \alpha_p(B') } \omega(B') Y^{-{\textfrak e}^{(1)}(B')} t^{\nu(\det B')}\\
&& \times {B_p^{(1)}(B',p^{-n/2-1}Yt^2) G_p^{(1)}(B',p^{-(n+1)/2}Y) \over \prod_{j=1}^n(1-p^{j-1-n}XYt^2)(1-p^{j-1-n}X^{-1}Yt^2)}.
\end{eqnarray*} 
This proves the assertion. 
\end{proof}
The polynomials $G_p^{(1)}(T,X)$ and $B_p^{(1)}(T,t)$ are expressed explicitly, and in particular they are determined by $[{\textfrak d}_T]$ and the $p$-rank of $T$ (cf. Lemma 4.2.1 of \cite {K-K4} and Lemma 5.1.1). Thus we can rewrite the power series $\widetilde R_{n-1}(d_0,\omega,X,Y,t)$  in more concise form (cf. Corollary to Theorem 5.2.8.)

\subsection{Formal power series of Koecher-Maa{\ss}   type and of modified Koecher-Maa{\ss}   type} 
  
  \noindent
  { }
  
  \bigskip
 
 Let  $r$ be a positive even integer. For $d_0 \in {\mathcal F}_p,j=0,1$ and $l=0,1,$ we  define a  formal power series $P_{r-j}^{(j)}(d_0,\omega,\xi,X,t)$ in $t$ by 
$$P_{r-j}^{(j)}(d_0,\omega,\xi,X,t)=\kappa(d_0,r-j,l)^{-1} t^{-a_{r-j,p}}\hspace*{-2.5mm}\sum_{B \in {\mathcal L}_{r-j,p}^{(j)}(d_0)}  \hspace*{-2.5mm}{\widetilde F_p^{(j)}(B,\xi,X) \over \alpha_p(B)}\omega(B)t^{\nu(\det B)}$$
for $\omega=\varepsilon^l$ with $l=0,1,$ 
 where $a_{i,p}=\delta_{2,p}(i-1)$ or $1$ according as $i$ is odd or even.
In particular we put
$P_{r-j}^{(j)}(d_0,\omega,X,t)=P_{r-j}^{(j)}(d_0,\omega,1,X,t).$
 This type of formal power series appears in an explicit formula of the Koecher-Maa{\ss}   series associated with 
 the Siegel Eisenstein series and the Duke-Imamo{\=g}lu-Ikeda lift (cf. \cite{I-K2}, \cite{I-K3},\cite{K-K4}). Therefore we say that this formal power series is of Koecher-Maa{\ss}   type.  
 
 For a variable $Y$ we introduce the symbol $Y^{1/2}$ so that $(Y^{1/2})^2=Y,$ and for an integer $a$ write $Y^{a/2}=(Y^{1/2})^a.$ Under this convention, we can write $Y^{-{\textfrak e}^{(j)}(T)}t^{\nu(\det T)}$ as $Y^{a_{r-j,p}/2}Y^{\nu(d_0)/2}(Y^{-1/2}t)^{\nu(\det T)}$ if $T \in {\mathcal L}_{r-j,p}^{(j)}(d_0),$ 
and  we sometimes write a power series 
$$P(Y,t)=\sum_{T \in {\mathcal L}_{r-j,p}^{(j)}(d_0)} a(T,Y) Y^{-{\textfrak e}^{(j)}(T)}t^{\nu(\det T)}
 \in {\bf C}[Y,Y^{-1}][[t]]$$ as 
$$P(Y,t)=Y^{a_{r-j,p}/2}Y^{\nu(d_0)/2}\sum_{T \in {\mathcal L}_{r-j,p}^{(j)}(d_0)} a(T,Y) (Y^{-1/2}t)^{\nu(\det T)}.$$
For $T \in {\mathcal L}_{r-j,p}^{(j)}$ let $\widetilde G_p^{(j)}(T,\xi,X,t)$ be the  polynomial defined in the previous subsection. Moreover for $\xi=\pm 1,$ and $j=0,1,$ we define a formal power series $\widetilde P_{r-j}^{(j)}(n;d_0,\omega,\xi,X,Y,t)$  in $t$ by 
\begin{eqnarray*}
\lefteqn{
\widetilde P_{r-j}^{(j)}(n;d_0,\omega,\xi,X,Y,t)=\kappa(d_0,r-j,l)^{-1}(tY^{-1/2})^{-a_{r-j,p}}Y^{\nu(d_0)/2}
} \\
&\times&  \hspace*{-2.5mm}\sum_{B' \in {\mathcal L}_{r,p}^{(j)}(d_0)}\hspace*{-2.5mm} {\widetilde G_p^{(j)}(B',\xi,X,p^{-n}t^2Y) \over \alpha_p(B') } \omega(B')(tY^{-1/2})^{\nu(\det (B'))}
\end{eqnarray*}
for $\omega=\varepsilon^l.$ 
Here we make the convention that $\widetilde P_{0}^{(0)}(n;d_0,\omega,\xi,X,Y,t)=1$ or $0$ according as $\nu(d_0)=0$ or not.
We say that   the series $\widetilde P_{r-j}^{(j)}(n;d_0,\omega,\xi,X,Y,t)$ is  of modified Koecher-Maa{\ss}   type.  
The relation between $\widetilde P_{r-j}^{(j)}(n;d_0,\omega,\xi,X,Y,t)$ and $P_{r-j}^{(j)}(d_0,\omega,\xi,X,t)$ will be given in the following proposition:
\begin{props}
Let $r$ be a positive even integer. Let $\omega=\varepsilon^l$ with $l=0,1,$ and $j=0,1.$ Then 
$$\widetilde P_{r-j}^{(j)}(n;d_0,\omega,\xi,X,Y,t)= Y^{\nu(d_0)/2}P_{r-j}^{(j)}(d_0,\omega,\xi,X,tY^{-1/2}) \prod_{i=1}^{r-j} (1-t^4p^{-n-r+j-2+i}).$$
\end{props}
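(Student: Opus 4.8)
The plan is to unwind the definitions of both sides and compare term by term. Recall that
$$\widetilde P_{r-j}^{(j)}(n;d_0,\omega,\xi,X,Y,t)=\kappa(d_0,r-j,l,tY^{-1/2})^{-1}\sum_{B' \in {\mathcal L}_{r,p}^{(j)}(d_0)} {\widetilde G_p^{(j)}(B',\xi,X,p^{-n}t^2Y) \over \alpha_p(B') } \omega(B')(tY^{-1/2})^{\nu(\det B')},$$
while $P_{r-j}^{(j)}(d_0,\omega,\xi,X,tY^{-1/2})$ is the same expression but with $\widetilde G_p^{(j)}(B',\xi,X,p^{-n}t^2Y)$ replaced by $\widetilde F_p^{(j)}(B',\xi,X)$ and with the argument $t$ everywhere replaced by $tY^{-1/2}$. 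Since $\kappa(d_0,r-j,l,tY^{-1/2})^{-1}$, the summation set ${\mathcal L}_{r,p}^{(j)}(d_0)$, the weights $\omega(B')=\varepsilon^l(B')$, the local densities $\alpha_p(B')$, and the monomials $(tY^{-1/2})^{\nu(\det B')}$ are literally identical in the two expressions, the entire claim reduces to the purely polynomial identity
$$\widetilde G_p^{(j)}(B',\xi,X,p^{-n}t^2Y) \;=\; \widetilde F_p^{(j)}(B',\xi,X)\prod_{i=1}^{r-j}(1-t^4p^{-n-r+j-2+i})$$
holding for every fixed $B' \in {\mathcal L}_{r,p}^{(j)}(d_0)$ once one substitutes $t \mapsto tY^{-1/2}$ appropriately; more precisely, after the substitution the surviving factor on the right must be $\prod_{i=1}^{r-j}(1-t^4p^{-n-r+j-2+i})$, independent of $B'$.

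The first step, then, is to expand $\widetilde G_p^{(j)}(B',\xi,X,u)$ using its definition as a sum over $i$ and over $D \in GL_{r-j}({\bf Z}_p)\backslash {\mathcal D}_{r-j,i}$ of $(-1)^i p^{i(i-1)/2}u^i\,\widetilde F_p^{(j)}(B'[D^{-1}],\xi,X)$, and to apply the reciprocity $\widetilde F_p^{(j)}(T,X^{-1})=\widetilde F_p^{(j)}(T,X)$ together with Lemma 5.2.2, which expresses $\widetilde F_p^{(j)}(B'[D^{-1}],\xi,X)$ (hence also $\widetilde G_p^{(j)}$) in terms of $\alpha_p(B'',B'[D^{-1}])/\alpha_p(B'[D^{-1}])$ and the polynomials $G_p^{(j)}$. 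The cleaner route, however, is to recognize that $\widetilde G_p^{(j)}(B',\xi,X,u)$ is — up to the usual $X^{-{\textfrak e}^{(j)}(B')}$ normalization and the relation $\widetilde G_p^{(j)}(T,X,1)=X^{-{\textfrak e}^{(j)}(T)}G_p^{(j)}(T,Xp^{-(m+1)/2})$ noted in Subsection 5.2 — precisely the auxiliary factor appearing in Theorem 5.2.5, namely $R^{(j)}(T,X,t)=B_p^{(j)}(T,p^{n/2-1}t)\widetilde G_p^{(j)}(T,X,t)/\prod_{j=1}^{r-j}(1-p^{j-1}X^{-1}t)(1-p^{j-1}Xt)$. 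So the second step is to invoke Theorem 5.2.5 (the $K$-$K3$ formula) directly: specializing $t$ there and matching the variable $u=p^{-n}t^2Y$ to the auxiliary variable, the ratio $\widetilde G_p^{(j)}(B',\xi,X,p^{-n}t^2Y)/\widetilde F_p^{(j)}(B',\xi,X)$ telescopes to the product $\prod_{i=1}^{r-j}(1-t^4p^{-n-r+j-2+i})$ after the substitution $t\mapsto tY^{-1/2}$ in $\widetilde F$, with all $B'$-dependence cancelling.

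The main obstacle I expect is bookkeeping of the exponents and of the $p=2$ normalizations: one must track the shift $p^{-(m+1)/2}$ relating $\widetilde G_p^{(j)}(T,X,1)$ to $G_p^{(j)}$, the factor $p^{n/2-1}$ inside $B_p^{(1)}$, the degree $m=r-j$ versus $n$ appearing in $p^{-n}$, and the substitution $t\mapsto tY^{-1/2}$ which turns $t^2Y$ into $t^2$ and $t^4$ into $t^4$ unchanged — so that the $Y$ actually disappears from the surviving product, matching the $Y$-free right-hand side. I would verify the index range $i=1,\dots,r-j$ and the exponent $-n-r+j-2+i$ by a direct comparison of the top-degree and constant terms of both sides in $t$, which pins down the telescoping uniquely; since $\widetilde F_p^{(j)}(B',\xi,X)$ is a nonzero polynomial in $X,X^{-1}$ and the claimed factor is a nonzero polynomial in $t$, the identity is then forced on the level of power series in ${\bf C}[X,X^{-1},Y^{1/2},Y^{-1/2}][[t]]$. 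Care with the case $p=2$ is needed only insofar as Lemma 5.2.2 and Theorem 5.2.5 are already stated uniformly in $p$, so no separate argument is required.
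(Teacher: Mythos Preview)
Your reduction to a pointwise identity is incorrect, and this is a genuine gap. You claim that the proposition reduces to
\[
\widetilde G_p^{(j)}(B',\xi,X,p^{-n}t^2Y)\;=\;\widetilde F_p^{(j)}(B',\xi,X)\prod_{i=1}^{r-j}(1-t^4p^{-n-r+j-2+i})
\]
for every fixed $B'$. But $\widetilde G_p^{(j)}(B',\xi,X,u)$ is by definition a sum over $i$ and $D\in GL_{r-j}({\bf Z}_p)\backslash{\mathcal D}_{r-j,i}$ of $(-1)^i p^{i(i-1)/2}u^i\widetilde F_p^{(j)}(B'[D^{-1}],\xi,X)$; the terms with $i\ge 1$ involve $\widetilde F_p^{(j)}$ evaluated at the \emph{different} matrices $B'[D^{-1}]$, not at $B'$. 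The ratio $\widetilde G_p^{(j)}(B',\xi,X,u)/\widetilde F_p^{(j)}(B',\xi,X)$ therefore depends on $B'$ and on $X$, and is not the claimed $B'$-free product. Your appeal to Theorem 5.2.5 does not help: that result expresses $\widetilde G_p^{(1)}$ in terms of the Andrianov series $R^{(1)}(T,X,t)=\sum_W \widetilde F_p^{(1)}(T[W],X)t^{\nu(\det W)}$, again a sum over many matrices, not in terms of the single value $\widetilde F_p^{(1)}(T,X)$.

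The identity is genuinely an identity of sums, not of summands. The paper's argument expands $\widetilde G_p^{(j)}$ and interchanges the sums over $B$ and over $D$: using Lemma 5.1.1(2) one rewrites the inner $D$-sum for fixed $B$ as a sum over $B'\in{\mathcal L}_{r-j,p}^{(j)}$ weighted by $\alpha_p(B',B,i)/\alpha_p(B')$; after swapping the order and applying Lemma 5.1.1(1) to the new inner $B$-sum, the $B$-dependence collapses to $(t^2p^{-r+j-1})^i$ times $\#(GL_{r-j}({\bf Z}_p)\backslash{\mathcal D}_{r-j,i})=\phi_{r-j}(p)/(\phi_i(p)\phi_{r-j-i}(p))$. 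Only then does the $i$-sum become the $q$-binomial identity $\sum_i(-1)^ip^{i(i+1)/2}(p^{-n-r+j-2}t^4)^i\binom{r-j}{i}_p=\prod_{i=1}^{r-j}(1-t^4p^{-n-r+j-2+i})$, yielding the stated product. The interchange step and the use of Lemma 5.1.1 are the missing ideas in your proposal.
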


\begin{proof}  For $i=0,...,r-j$ put 
$$\widetilde P_{r-j,i}^{(j)}(d_0,\omega,\xi,X,t)=\kappa(d_0,r-j,l)^{-1}t^{-a_{r-j,p}}$$
$$ \times \sum_{B \in {\mathcal L}_{r-j,p}^{(j)}(d_0)} \sum_{D \in  GL_{r-j}({\bf Z}_p) \backslash {\mathcal D}_{r-j,i}} {\widetilde F_p^{(j)}(B[D^{-1}],\xi,X) \over \alpha_p(B)}\omega(B)t^{\nu(\det B)}.$$
Then we have 
\begin{eqnarray*}
\lefteqn{
\widetilde P_{r-j}^{(j)}(n;d_0,\omega,\xi,X,Y,t)
} \\
&=&\sum_{i=0}^{r-j} (-1)^i p^{i(i-1)/2}(p^{-n}t^2Y)^i  Y^{\nu(d_0)/2}\widetilde P_{r-j,i}^{(j)}(d_0,\omega,\xi,X,tY^{-1/2}) . 
\end{eqnarray*}
We have 
$$\widetilde P_{r-j,i}^{(j)}(d_0,\omega,\xi,X,t)= \sum_{B \in {\mathcal L}_{r-j,p}^{(j)}(d_0)} {\omega(B) \over \alpha_p(B)}t^{\nu(\det B)}$$
 $$ \times \sum_{B' \in {\mathcal L}_{r-j,p}^{(j)}} \widetilde F_p^{(j)}(B',\xi,X) \#(\widetilde \Omega(B',B,i)/GL_{r-j}({\bf Z}_p))  ,$$
where $\widetilde \Omega(B',B,i)=\{D \in {\mathcal D}_{r-j,i} \ | \ B'[D^{-1}] \sim B \}.$ 
Hence by [\cite{K-K4}, Lemma 4.1.1 (2)] we have
\begin{eqnarray*}
\lefteqn{\widetilde P_{r-j,i}^{(j)}(d_0,\omega,\xi,X,t)} \\
&=& \sum_{B \in {\mathcal L}_{r-j,p}^{(j)}(d_0)} {1 \over \alpha_p(B)} \sum_{B' \in {\mathcal L}_{r-j,p}^{(j)}} {\widetilde F_p^{(j)}(B',\xi,X) \alpha_p(B',B,i) \over  \alpha_p(B')}\omega(B) \\
&& \hspace*{5mm}\times \,p^{-(\nu(\det B)-\nu(\det B'))/2}t^{\nu(\det B)},
\end{eqnarray*}
where 
$$\alpha_p(B',B,i)=2^{-1}\lim_{e \to \infty}p^{-(r-j)(r-j-1)e/2} \#\{ \overline {X} \in {\mathcal A}_e(B',B) \ | \ X \in {\mathcal D}_{r-j,i} \}.$$
Let $B$ and $B'$ be elements of ${\mathcal L}_{r-j,p}^{(j)},$ and suppose that $\alpha_p(B',B,i) \not=0.$ Then we note that $B \in {\mathcal L}_{r-j,p}^{(j)}(d_0)$ if and only if $B' \in {\mathcal L}_{r-j,p}^{(j)}(d_0).$ Hence by [\cite{K-K4}, Lemma 4.1.1 (1)] we have
\begin{flushleft}$\widetilde P_{r-j,i}^{(j)}(d_0,\omega,\xi,X,t)$
\end{flushleft}
$$=\sum_{B' \in {\mathcal L}_{r-j,p}^{(j)}(d_0) } {\widetilde F_p^{(j)}(B',\xi,X)  \over  \alpha_p(B')} p^{\nu(\det B')/2} \omega(B')\sum_{B \in {\mathcal L}_{r-j,p}^{(j)}}  (tp^{-1/2})^{\nu(\det B)} {\alpha_p(B',B,i) \over \alpha_p(B)}$$
$$=\sum_{B' \in {\mathcal L}_{r-j,p}^{(j)}(d_0) } {\widetilde F_p^{(j)}(B',\xi,X)  \over  \alpha_p(B')} p^{\nu(\det B')/2}(tp^{-1/2})^{\nu(\det B')} (t^2p^{-r+j-1})^{i}\# (GL_{r-j}({\bf Z}_p) \backslash {\mathcal D}_{r-j,i}).$$
By Lemma 3.2.18 in 
\cite{A}, we have
$$\# (GL_{r-j}({\bf Z}_p) \backslash{\mathcal D}_{r-j,i})={\phi_{r-j}(p) \over \phi_i(p)\phi_{r-j-i}(p)}.$$
Hence 
\begin{eqnarray*}
\lefteqn{\widetilde P_{r-j,i}^{(j)}(d_0,\omega,\xi,X,t)} \\
&=& \sum_{B'  \in {\mathcal L}_{r-j,p}^{(j)}(d_0) } {\widetilde F_p^{(j)}(B',\xi,X)  \over  \alpha_p(B')} \omega(B')t^{\nu(\det B')}
{\phi_{r-j}(p) \over \phi_i(p)\phi_{r-j-i}(p)} (t^2p^{-r+j-1})^{i} \\
&=& {\phi_{r-j}(p) \over \phi_i(p)\phi_{r-j-i}(p)} P_{r-j}^{(j)}(d_0,\omega,\xi,X,t)(t^2p^{-r+j-1})^{i}. 
\end{eqnarray*}
 Thus, by (3.2.34) of \cite{A}, we have 
\begin{eqnarray*}
\lefteqn{\widetilde P_{r-j}^{(j)}(n;d_0,\omega,\xi,X,t) =Y^{\nu(d_0)/2}}\\
&\times &\sum_{i=0}^{r-j}(-1)^i p^{i(i+1)/2}(p^{-n-r+j-2}t^4)^i {\phi_{r-j}(p) \over \phi_i(p) \phi_{r-j-i}(p)} P_{r-j}^{(j)}(d_0,\omega,\xi,X,tY^{-1/2}) \\
&=& Y^{\nu(d_0)/2}P_{r-j}^{(j)}(d_0,\omega,\xi,X,tY^{-1/2}) \prod_{i=1}^{r-j} (1-t^4p^{-n-r+j-2+i}). 
\end{eqnarray*}
\end{proof}  
We give explicit formulas for $P_{r-j}^{(j)}(d_0,\varepsilon^l,\xi,X,t)$ for $j=0,1,l=0,1$ and $\xi=\pm 1.$
\begin{thms}
Let $d_0 \in {\mathcal F}_{p}$ and $\xi_0=\chi(d_0).$  

\noindent
{\rm (1)} Let $r$ be even. Then 
  $$P_r^{(0)}(d_0,\iota,X,t)={(p^{-1}t)^{\nu(d_0)} \over \phi_{r/2-1}(p^{-2})(1-p^{-r/2}\xi_0)}$$
  $$\times  {(1+t^2p^{-r/2-3/2})(1+t^2p^{-r/2-5/2}\xi_0^2)-
  \xi_0 t^2p^{-r/2-2}(X+X^{-1}+p^{1/2-r/2} +p^{-1/2+r/2}) \over 
  (1-p^{-2}Xt^2)(1-p^{-2}X^{-1}t^2)\prod_{i=1}^{r/2} (1-t^2p^{-2i-1}X)(1-t^2p^{-2i-1}X^{-1})  },$$
  and
  $$P_r^{(0)}(d_0,\varepsilon,X,t)={1  \over \phi_{r/2-1}(p^{-2})(1-p^{-r/2}\xi_0)}{\xi_0^2 \over 
  \prod_{i=1}^{r/2} (1-t^2p^{-2i}X)(1-t^2p^{-2i}X^{-1})  }.$$     
   \noindent
   {\rm (2)}  Let $r$ be even. Then  
  $$P_{r-1}^{(1)}(d_0, \iota,X,t)$$
  $$= {(p^{-1}t)^{\nu(d_0)}(1-\xi_0 t^2p^{-5/2} ) \over 
  (1-t^2p^{-2}X)(1-t^2p^{-2}X^{-1})\prod_{i=1}^{(r-2)/2} (1-t^2p^{-2i-1}X)(1-t^2p^{-2i-1}X^{-1}) \phi_{(r-2)/2}(p^{-2})},$$
  and
  $$P_{r-1}^{(1)}(d_0,\varepsilon,X,t)$$
  $$= {(p^{-1}t)^{\nu(d_0)}(1-\xi_0 t^2p^{(-1/2-r)}) \over 
  \prod_{i=1}^{r/2} (1-t^2p^{-2i}X)(1-t^2p^{-2i}X^{-1}) \phi_{(r-2)/2}(p^{-2})}.$$
\end{thms}  
\begin{proof} The assertions (1) and  (2) are due to  [\cite{Kat5}, Proposition 4.3], and to  [\cite{K-K4}, Theorem 4.4.1], respectively.
\end{proof}
\begin{xcor}
 Let $\xi=\pm 1.$ 

\noindent
{\rm (1)} Let $r$ be even. Then 
  $$P_r^{(0)}(d_0,\iota,\xi,X,t)={(p^{-1}t)^{\nu(d_0)} \over \phi_{r/2-1}(p^{-2})(1-p^{-r/2}\xi_0)}$$
  $$\times \{(1+t^2p^{-r/2-3/2}\xi)(1+t^2p^{-r/2-5/2}\xi\xi_0^2)$$
  $$-\xi_0 t^2p^{-r/2-2}(X+X^{-1}+p^{1/2-r/2}\xi +p^{-1/2+r/2}\xi) \}$$
$$ \times { 1 \over 
  (1-p^{-2}Xt^2)(1-p^{-2}X^{-1}t^2)\prod_{i=1}^{r/2} (1-t^2p^{-2i-1}X)(1-t^2p^{-2i-1}X^{-1})  },$$
  and
  $$P_r^{(0)}(d_0,\varepsilon,\xi,X,t)={1  \over \phi_{r/2-1}(p^{-2})(1-p^{-r/2}\xi_0)}{\xi_0^2 \over 
  \prod_{i=1}^{r/2} (1-t^2p^{-2i}X)(1-t^2p^{-2i}X^{-1})  }.$$     
   \noindent
   {\rm (2)}  Let $r$ be even. Then  
  $$P_{r-1}^{(1)}(d_0, \iota,\xi,X,t)$$
  $$= {(p^{-1}t)^{\nu(d_0)}(1-\xi_0 t^2p^{-5/2}\xi ) \over 
  (1-t^2p^{-2}X)(1-t^2p^{-2}X^{-1})\prod_{i=1}^{(r-2)/2} (1-t^2p^{-2i-1}X)(1-t^2p^{-2i-1}X^{-1}) \phi_{(r-2)/2}(p^{-2})},$$
  and
  $$P_{r-1}^{(1)}(d_0,\varepsilon,\xi,X,t)= {(p^{-1}t)^{\nu(d_0)}(1-\xi_0 t^2p^{(-1/2-r)}\xi ) \over 
  \prod_{i=1}^{r/2} (1-t^2p^{-2i}X)(1-t^2p^{-2i}X^{-1}) \phi_{(r-2)/2}(p^{-2})}.$$
\end{xcor}  

 \begin{proof} Put 
 $$S_{r-j}^{(j)}(d_0,\omega,\xi,X,t)=\sum_{T \in {\mathcal L}_{r-j,p}^{(j)}} {\widetilde F(T,\xi,X) \over \alpha_p(T)} t^{{\textfrak e}^{(j)}(T)},$$
 and
 $$S_{r-j}^{(j)}(d_0,\omega,X,t)=\sum_{T \in {\mathcal L}_{r-j,p}^{(j)}} {\widetilde F(T,X) \over \alpha_p(T)} t^{{\textfrak e}^{(j)}(T)}.$$
 Then we have 
$$P_{r-j}^{(j)}(d_0,\omega,\xi,X,t)=t^{\nu(d_0)}S_{r-j}^{(j)}(d_0,\omega,\xi,X,t^2) \ {\rm and} \ P_{r-j}^{(j)}(d_0,\omega,X,t)=t^{\nu(d_0)}S_{r-j}^{(j)}(d_0,\omega,X,t^2).$$
By definition we have 
$$S_{r-j}^{(j)}(d_0,\omega,\xi,X,t^2)=S_{r-j}^{(j)}(d_0,\omega,\xi X,\xi t^2).$$
Thus the assertion follows from the above theorem.
 \end{proof}


Now let $r$ be an even integer, and for $j=0,1,$  we consider partial series of $\widetilde P_{r-j}^{(j)}(n;d_0,\omega,\xi,X,Y,t):$  First let $p\not=2.$ Then put 
\begin{eqnarray*}
\lefteqn{Q_{r}^{(0)}(n;d_0,\varepsilon^l,\xi,X,Y,t)  = Y^{\nu(d_0)/2}} \\
 &\times & \sum_{B' \in S_{r}({\bf Z}_p,d_0) \cap S_{r}({\bf Z}_p)} {\widetilde G_p^{(0)}(pB',\xi,X,p^{-n}t^2Y) \over \alpha_p(pB') } \varepsilon(pB')^l(tY^{-1/2})^{\nu(\det pB')}, 
 \end{eqnarray*}
and
\begin{eqnarray*}
\lefteqn{Q_{r-1}^{(1)}(n;d_0,\varepsilon^l,\xi,X,Y,t)= \kappa(d_0,r-1,l)^{-1}  Y^{\nu(d_0)/2}}\\
 &\times& \sum_{B' \in p^{-1}S_{r-1}({\bf Z}_p,d_0) \cap S_{r-1}({\bf Z}_p)} {\widetilde G_p^{(1)}(pB',\xi,X,p^{-n}t^2Y) \over \alpha_p(pB') } \varepsilon(pB')^l(tY^{-1/2})^{\nu(\det pB')}. 
 \end{eqnarray*}
Next let $p=2.$  Then put
\begin{eqnarray*}
\lefteqn{Q_{r-1}^{(1)}(n;d_0,\varepsilon^l,\xi,X,Y,t)=\kappa(d_0,r-1,l)^{-1} (tY^{-1/2})^{\delta_{2,p}(2-n)} Y^{\nu(d_0)/2}} \\
& \times& \sum_{B' \in S_{r-1}({\bf Z}_2,d_0) \cap S_{r-1}({\bf Z}_2)} {\widetilde G_2^{(1)}(4B',\xi,X,2^{-n}t^2Y) \over \alpha_2(4B') } \varepsilon(4B')^l(tY^{-1/2})^{\nu_2(\det (4B'))},
\end{eqnarray*} 
and 
\begin{eqnarray*}
\lefteqn{Q_{r}^{(0)}(n;d_0,\varepsilon^l,\xi,X,Y,t)=\kappa(d_0,r,l)^{-1} Y^{\nu(d_0)/2}}\\ 
&\times& \sum_{B' \in  S_{r}({\bf Z}_2,d_0) \cap S_r({\bf Z}_2)_e} {\widetilde G_2^{(0)}(2B',\xi,X,2^{-n}t^2Y) \over \alpha_2(2B') } \varepsilon(B')^l(tY^{-1/2})^{\nu(\det (2B'))}. 
\end{eqnarray*}
Here we make the convention that $Q_{0}^{(0)}(n;d_0,\varepsilon^l,\xi,X,Y,t)=1$  or $0$ according as $\nu(d_0)=0$ or not. 

A non-degenerate square matrix $D=(d_{ij})_{m \times m}$ with entries in ${\bf Z}_p$ is said to be reduced if $D$ satisfies the following two conditions:\begin{enumerate}
\item[(a)] For 
$i=j$, $d_{ii}=p^{e_{i}}$ with a non-negative integer $e_i$; \vspace*{1mm}
 \item[(b)] For 
$i\ne j$, $d_{ij}$ is a non-negative integer satisfying 
$ d_{ij} \le p^{e_j}-1$ if $i <j$ and $d_{ij}=0$ if $i >j$. 
\end{enumerate}
It is well known that  we can take the set of all reduced matrices as a  complete set of representatives of $GL_m({\bf Z}_p) \backslash M_m({\bf Z}_p)^{\times}.$  

To consider the relation between 
\begin{center} 
$\widetilde P_{r-j}^{(j)}(n;d_0,\varepsilon^l,\xi,X,Y,t) \ $ and $\ Q_{r-j}^{(j)}(n;d_0,\varepsilon^l,\xi,X,Y,t),$ 
\end{center}
and to express $\widetilde R_{n-1}(d_0,\varepsilon^l,X,Y,t)$ in terms of 
$\widetilde P_{r-j}^{(j)}(n;d_0,\varepsilon^l,\xi,X,Y,t),$ we give some preliminary results.
\begin{lems}
 Let $p \not=2.$ Let $m$  be an even integer, and $r$ an integer such that $0 \le r \le m.$  Let $d \in {\mathcal U}$ and $\xi_0=\pm 1.$
  
  \noindent
  {\rm (1)}  Suppose that $r$ is even.
  
\begin{enumerate}
\item[{\rm (1.1)}] Let $B' \in S_{r}({\bf Z}_p)^{\times}.$ Then  
  $$\widetilde G_p^{(0)}(\Theta_{m-r,d} \bot  pB',\xi_0,X,t)=\widetilde G_p^{(0)}( pB', \xi_0 \chi(d),X,t).$$
  
\item[{\rm (1.2)}] Let $B' \in S_{r-1}({\bf Z}_p)^{\times}.$ Then 
  $$\widetilde G_p^{(1)}(\Theta_{m-r,d} \bot  pB',\xi_0,X,t)=\widetilde G_p^{(1)}(  pdB',\xi_0 , X,t).$$

\end{enumerate}  
  \noindent
  {\rm (2)}  Suppose that $r$ is odd. 
\begin{enumerate}
\item[{\rm (2.1)}] Let $B' \in S_{r}({\bf Z}_p)^{\times}.$ Then  
  $$\widetilde G_p^{(0)}(\Theta_{m-r,d} \bot pB',\xi_0,X,t)=\widetilde G_p^{(1)}( -pdB',\xi_0, X,t).$$

\item[{\rm (2.2)}] Let $B' \in S_{r-1}({\bf Z}_p)^{\times}.$ Then   
  $$\widetilde G_p^{(1)}(\Theta_{m-r,d} \bot pB',\xi_0,X,t)=\widetilde G_p^{(0)}(pB',\xi_0 \chi(d),X,t).$$
\end{enumerate}
\noindent
{\rm (3)} Suppose that $r$ is even. Then we have
$$\widetilde G_p^{(0)}(d'B,\xi_0,X,t)=\widetilde G_p^{(0)}(B,\xi_0,X,t)$$
for $d' \in {\bf Z}_p^*,$ and $B \in S_r({\bf Z}_p)^{\times}.$ 
\end{lems}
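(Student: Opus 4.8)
The plan is to reduce all five identities to the corresponding identities for the building blocks $\widetilde F_p^{(j)}$, and then to derive those from the elementary reduction properties of the local Siegel series.

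\emph{Step 1 (reduction to $\widetilde F_p^{(j)}$).} By definition,
$$\widetilde G_p^{(j)}(T,\xi,X,t)=\sum_{i\ge 0}(-1)^ip^{i(i-1)/2}t^i\sum_{D\in GL_{\deg T}({\bf Z}_p)\backslash {\mathcal D}_{\deg T,i}}\widetilde F_p^{(j)}(T[D^{-1}],\xi,X),$$
where only those cosets $D$ for which $T[D^{-1}]$ lies in the relevant set contribute, so the effective range of $D$ is $GL_{\deg T}({\bf Z}_p)\backslash\widetilde\Omega^{(j)}(T,i)$. Taking $T=\Theta_{m-r,d}\bot pB'$ and using that $\Theta_{m-r,d}$ is unimodular — exactly as in the surjectivity argument in the proof of Lemma 5.1.2 — every reduced $D$ with $T[D^{-1}]$ half-integral is, modulo $GL_{\deg T}({\bf Z}_p)$, of the block shape $1_{m-r}\bot D_1$; hence the reduction lemmas of Subsection 5.1 (Lemma 5.1.2 and its Corollary, in the version keeping $i$ fixed) furnish a $\nu(\det)$-preserving bijection $D\mapsto D_1$ between the two index sets, along which $T[D^{-1}]=\Theta_{m-r,d}\bot(pB')[D_1^{-1}]$. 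So it suffices to prove the matrix-wise identities
$$\widetilde F_p^{(j')}(\Theta_{m-r,d}\bot C,\xi_0,X)=\widetilde F_p^{(j'')}(C^{\sharp},\xi_0',X)$$
for $C=(pB')[D_1^{-1}]$, where $(j'',C^{\sharp},\xi_0')$ is $(0,C,\xi_0\chi(d))$ in (1.1), $(1,dC,\xi_0)$ in (1.2), $(1,-dC,\xi_0)$ in (2.1), and $(0,C,\xi_0\chi(d))$ in (2.2); summing against $(-1)^ip^{i(i-1)/2}t^i$ then gives the claim, and (3) reduces in the same way to $\widetilde F_p^{(0)}(d'C,\xi_0,X)=\widetilde F_p^{(0)}(C,\xi_0,X)$.

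\emph{Step 2 (the identities for $\widetilde F_p^{(j)}$).} Split $\widetilde F_p^{(j')}(\,\cdot\,,\xi_0,X)=X^{-{\textfrak e}^{(j')}(\,\cdot\,)}F_p^{(j')}(\,\cdot\,,\xi_0X)$. The exponent ${\textfrak e}^{(j')}$ depends on its argument only through $\nu_p(\det)$ (for an even-degree half-integral form $S$ one has ${\textfrak e}_p(S)=\lfloor\nu_p(\det S)/2\rfloor$, and passing to $S^{(1)}$ in the odd-degree case leaves this unchanged), and adjoining a unimodular block or multiplying by a unit does not change $\nu_p(\det)$; so the two $X$-power prefactors coincide. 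For the Siegel-series factor I would use, for $p\ne 2$: \emph{(a)} $F_p(d'S,X)=F_p(S,X)$ for $d'\in{\bf Z}_p^{*}$, via the substitution $R\mapsto d'^{-1}R$ in the defining series of $b_p$; \emph{(b)} the reduction formula for $F_p$ upon adjoining a unimodular block — the same input that yields Lemma 5.2.1, namely Lemma 9 of \cite{Ki1} — whose only net effect on the normalized polynomial is the substitution $X\mapsto\chi(d)X$, which reflects the fact that $\Theta_{m-r,d}$ multiplies the invariant $\xi_p$ by $\chi(d)$; and \emph{(c)} when the adjoined block has odd degree, the equivalence $S^{(1)}\sim 1\bot S$ over ${\bf Z}_p$ of Lemma 4.1\,(2.1), which lets one rewrite $F_p^{(1)}(S,\xi_0X)=F_p(1\bot S,\xi_0X)$ and $F_p^{(0)}(\Theta_{m-r,d}\bot C,\xi_0X)=F_p(2^{-1}\Theta_{m-r,d}\bot 2^{-1}C,\xi_0X)$ and compare the two. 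Assertion (3) is then immediate from (a).

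\emph{Main obstacle.} The delicate point is the odd-degree book-keeping in (2.1) and (2.2): there $m-r$ is odd, so adjoining $\Theta_{m-r,d}$ flips the superscript $(0)\leftrightarrow(1)$, and the value $\det\Theta_{m-r,d}=(-1)^{[(m-r+1)/2]}d$ interacts with the sign conventions in the definition of $S^{(1)}$ and of ${\textfrak e}^{(1)}$, $\xi^{(1)}$, $\overline{\xi}^{(1)}$ to produce exactly $-pdB'$ (and not $pdB'$) on the right of (2.1). One must verify that the coset bijection of Lemma 5.1.2 is genuinely compatible with the passage $S\mapsto S^{(1)}$ of Lemma 4.1 — i.e. that it matches $\widetilde\Omega^{(0)}(\Theta_{m-r,d}\bot pB',i)$ with the set attached to $-pdB'$ with the correct shift of $i$ — and that all the accruing signs and $\chi(d)$-factors cancel. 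Everything else is a routine verification; the case $p=2$, which is not needed for this lemma, is dealt with separately using the canonical forms of quadratic forms over ${\bf Z}_2$ recalled above.
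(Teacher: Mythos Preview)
Your proposal is correct and follows essentially the same route as the paper: reduce $\widetilde G_p^{(j)}$ to $\widetilde F_p^{(j)}$ via the coset bijections of Lemma~5.1.2, and then handle $\widetilde F_p^{(j)}$ using Kitaoka's reduction (Lemma~9 of \cite{Ki1}) together with the identification $S^{(1)}\sim 1\bot S$ from Lemma~4.1\,(2.1). The only cosmetic difference is that the paper passes from $G_p^{(j)}$ to $\widetilde F_p^{(j)}$ via Lemma~5.2.2 rather than by your direct splitting $\widetilde F_p^{(j)}=X^{-{\textfrak e}^{(j)}}F_p^{(j)}$, and it spells out the chain of equivalences $\widetilde F_p^{(1)}(\Theta_{m-r,d}\bot pB',\xi_0,X)=\widetilde F_p(1\bot\Theta_{m-r,d}\bot pB',\xi_0,X)=\cdots=\widetilde F_p^{(1)}(pdB',\xi_0,X)$ for (1.2), leaving the remaining cases (including the sign bookkeeping you flag in (2.1)) to the reader.
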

    \begin{proof}   Let $m-r$ be even.
By [\cite{Kat4}, Proposition 3.2] we have 
   $$\widetilde F_p^{(0)}(\Theta_{m-r,d} \bot  pB',\xi_0,X)=\widetilde F_p^{(0)}(pB',\xi_0\chi(d),X)$$
   for $B' \in S_{r}({\bf Z}_p)^{\times}.$ We note that
$$\widetilde G_p^{(0)}(\Theta_{m-r,d} \bot pB',\xi_0,X,t)=\sum_{i=0}^{m} (-1)^i p^{i(i-1)/2}t^i $$
$$ \times \sum_{D \in GL_{m}({\bf Z}_p) \backslash \widetilde \Omega^{(0)}(\Theta_{m-r,d} \bot pB',i)} \widetilde F^{(0)}_p((\Theta_{m-r,d} \bot pB')[D^{-1}],\xi_0,X), $$
 where for $j=0,1$ and  $B \in {\mathcal L}_{m-j,p}^{(j)}$ put 
$$\widetilde \Omega^{(j)}(B,i)=\{W \in {\mathcal D}_{m-j,i} \ |  \ B[W^{-1}] \in {\mathcal L}_{m-j,p}^{(j)} \}.$$
 Thus the assertion (1.1) follows from  [\cite{K-K4}, Lemma 4.1.2 (1.1)].
   Furthermore we have 
   $$\widetilde F_p^{(1)}(\Theta_{m-r,d} \bot  pB',\xi_0,X)=\widetilde F_p(1 \bot \Theta_{m-r,d} \bot  pB',\xi_0,X)$$
   $$=\widetilde F_p(d \bot \Theta_{m-r} \bot   pB',\xi_0,X)=\widetilde F_p(1  \bot \Theta_{m-r} \bot  p d B',\xi_0,X) $$
   $$=\widetilde F_p(1  \bot p d B',\xi_0,X)=\widetilde F_p^{(1)}(p d B',\xi_0,X) $$
   for $B' \in S_{r-1}({\bf Z}_p)^{\times}.$ Thus the assertion (1.2) follows from [\cite{K-K4}, Lemma 4.1.2 (1.2)]. The other assertions can be proved in a similar way.
   \end{proof}
   
\begin{lems}  
Let $p =2.$ Let $m$ and $r$ be even integers such that $0 \le r \le m,$  and $\xi_0=\pm 1.$   

\noindent 
{\rm (1)} Let $d \in {\mathcal U}.$
\begin{enumerate}
\item[{\rm (1.1)}] Let $B' \in S_r({\bf Z}_2)^{\times}.$ Then  
$$\widetilde G_2^{(0)}(\Theta_{m-r,d} \bot  2B',\xi_0,X,t)=\widetilde G_2^{(0)}( 2B',\xi_0 \chi(d), X,t),$$
  
\item[{\rm (1.2)}] Let $B' \in S_{r-1}({\bf Z}_2)^{\times}.$ Then  
  $$\widetilde G_2^{(1)}(2\Theta_{m-r,d} \bot 4B',\xi_0,X,t)=\widetilde G_2^{(1)}(4dB', \xi_0, X,t).$$
\end{enumerate}  
\noindent  
{\rm (2)} 
\begin{enumerate}
\item[{\rm (2.1)}]  Let $a \in {\mathcal U}$ and $B' \in S_{r}({\bf Z}_2)^{\times}.$ 
Then  
  $$\widetilde G_2^{(1)}(-a \bot 2\Theta_{m-r-2} \bot 4B',\xi_0,X,t)=\widetilde G_2^{(0)}(2B',\xi_0 \chi(a), X,t).$$
  
\item[{\rm (2.2)}] Let $B' \in S_{r-1}({\bf Z}_2)^{\times}$ and $a \in {\bf Z}_2^*.$ Then   
  $$\widetilde G_2^{(0)}(  \Theta_{m-r} \bot 2a \bot 2B',\xi_0,X,t)=\widetilde G_2^{(1)}(4aB',\xi_0, X,t).$$
\end{enumerate}
\noindent
{\rm (3)} We have
$$\widetilde G_2^{(0)}(d'B,\xi_0,X,t)=\widetilde G_2^{(0)}(B,\xi_0,X,t)$$
for $d' \in {\bf Z}_2^*,$ and $B \in S_r({\bf Z}_2)^{\times}.$ 

\noindent
{\rm (4)} Let $u_0 \in {\bf Z}_2^*$ and $B_1 \in S_{r-2}({\bf Z}_2)^{\times}.$ Then  
$$\widetilde G_2^{(1)}(u_0 \bot  5B_1,\xi_0,X,t)=\widetilde G_2^{(1)}( u_0 \bot B_1,\xi_0, X,t).$$
 \end{lems}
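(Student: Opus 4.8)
The plan is to follow, essentially step for step, the proof of the preceding lemma (the case $p\neq 2$), replacing its odd-prime ingredients by their $2$-adic counterparts: parts (1.1)--(1.2) of Lemma 5.1.2 by parts (2.1)--(2.3) together with (3) of that same lemma, the local-density formulas of Lemma 5.1.3 by those of Lemma 5.1.4, and the odd-prime form of Lemma 9 of \cite{Ki1} (adjoining a unimodular direct summand to a Gram matrix) by its $2$-adic form, the latter requiring in addition Watson's canonical form (Lemma 5.3.2), the Hasse-invariant identities of Lemma 5.3.3, (2), and the description of $A^{(1)}$ furnished by Lemma 4.1, (2.2).

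First I would unfold the definition of $\widetilde G_2^{(j)}$: for each index $i$ the inner sum $\sum_D\widetilde F_2^{(j)}(T[D^{-1}],\xi_0,X)$ runs, in effect, over $D\in GL_m({\bf Z}_2)\backslash\bigl({\mathcal D}_{m,i}\cap\widetilde\Omega^{(j)}(T)\bigr)$, where $T$ is the matrix occurring on the side in question. Hence each of (1.1)--(2.2) follows once one produces, for every $i$, a $\nu(\det(\cdot))$-preserving bijection between the index set for the left-hand matrix and that for the right-hand matrix together with the matching pointwise identity for $\widetilde F_2^{(j)}$. The bijections are exactly the ones supplied by Lemma 5.1.2: part (2.1) (with $\Theta=\Theta_{m-r,d}$, resp.\ $2\Theta_{m-r,d}$) handles (1.1) and (1.2); part (2.2) handles the type (I) situation in (2.1); and the non-standard bijection $\widetilde\psi$ of part (2.3) handles (2.2); part (3) extends each bijection to its truncation by ${\mathcal D}_{m,i}$, and the Corollary to Lemma 5.1.2 does the same job in the parity-changing cases. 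What remains are pointwise identities such as $\widetilde F_2^{(0)}(\Theta_{m-r,d}\bot 2B',\xi_0,X)=\widetilde F_2^{(0)}(2B',\xi_0\chi(d),X)$ for (1.1) and $\widetilde F_2^{(1)}(2\Theta_{m-r,d}\bot 4B',\xi_0,X)=\widetilde F_2^{(1)}(4dB',\xi_0,X)$ for (1.2), and their analogues for (2.1)--(2.2). Assertion (3) is separate and much easier: since the $2$-adic Siegel series is unaffected by scaling the Gram matrix by a unit, so is $\widetilde F_2^{(0)}$ in even degree (where $\xi_2$, $[{\textfrak d}\,]$ and ${\textfrak e}^{(0)}$ are also unchanged), and applying this termwise to the defining sum of $\widetilde G_2^{(0)}$, using $(d'B)[D^{-1}]=d'(B[D^{-1}])$, gives (3).

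To prove the pointwise identities I would pass to the genuine local Siegel series via $F_2^{(0)}(T,X)=F_2(2^{-1}T,X)$ and $F_2^{(1)}(T,X)=F_2(T^{(1)},X)$, reading the shape of $T^{(1)}$ off Lemma 4.1, (2.2): adjoining a unimodular block to a matrix $A\in({\mathcal L}'_{m-1,p})^{\times}$ corresponds, under $A\mapsto A^{(1)}$, to adjoining a half-integral unimodular block (a scalar $1$, or a $2\times2$ block $\smallmattwo(1;1/2;1/2;{(a+1)/4})$ with $a\equiv -1 \ {\rm mod} \ 4$). In this way each pointwise identity becomes an instance of the stability of the normalized Siegel series $\widetilde F_2$ under splitting off an even unimodular direct summand --- the $2$-adic analogue of Lemma 9 of \cite{Ki1}. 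I would prove this by putting the complementary form into Watson's canonical form (Lemma 5.3.2), absorbing the unimodular summand into the scale-$0$ Jordan component, and tracking the induced changes of ${\textfrak e}^{(j)}$, $\nu(\det)$, $\xi_2$ and $[{\textfrak d}\,]$ with the Hasse-invariant formulas of Lemma 5.3.3, (2); the passage from the $G$-form of the identity to the $\widetilde F$-form goes through Lemma 5.2.2 together with the local densities computed in Lemma 5.1.4. The twist $\xi_0\mapsto\xi_0\chi(d)$ occurring in (1.1) and (2.1) is precisely the change in $\xi_2$ caused by adjoining $\Theta_{m-r,d}$ for $d\in\{1,5\}$, while in (1.2) and (2.2) that change is absorbed by replacing $B'$ with $dB'$, resp.\ $aB'$.

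The main obstacle is this stability step at $p=2$. Unlike in the odd-prime case, an even unimodular $2$-adic form is not determined by its rank and determinant, so the absorption of the unimodular summand into the canonical form must be carried out case by case according to the block types (I)--(III) of Lemma 5.3.2, and the interaction between the $(0)$- and $(1)$-operations forces the use of the bijection $\widetilde\psi$ of Lemma 5.1.2, (2.3), which is \emph{not} induced from $\psi_{m-n_0-1,m}$; this is exactly what makes the type (II) instances of (2.1) and (2.2) the delicate ones, and what makes the $p=2$ bookkeeping --- with the Hasse invariants and with the exponents $\nu(\det)$ and ${\textfrak e}^{(j)}$ --- considerably longer than its odd-prime counterpart, as anticipated in the outline at the start of this section.
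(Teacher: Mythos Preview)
Your approach is essentially the one the paper intends --- the paper's own proof is the single line ``in a way similar to Lemma 5.3.6,'' and you have correctly identified the ingredients that replace their odd-prime counterparts: parts (2.1)--(2.3) and (3) of Lemma 5.1.2 for the index-set bijections, Lemma 5.1.4 for the local densities, and the $2$-adic case of Lemma 9 of \cite{Ki1} for the primitive-series identity.

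One remark: you are over-engineering the ``main obstacle.'' The stability of $G_2^{(j)}$ under splitting off an even unimodular summand does not need Watson's canonical form (Lemma 5.3.2) or the Hasse-invariant identities of Lemma 5.3.3; Lemma 9 of \cite{Ki1} already covers $p=2$ directly (this is how Lemma 5.2.1(2) is proved). So the chain $G_2^{(j)} \to \widetilde F_2^{(j)}$ (via Lemma 5.2.2) $\to \widetilde G_2^{(j)}$ (via Lemma 5.1.2(2.1)--(2.3),(3)) runs exactly as in the odd-prime proof, with no extra case analysis of types (I)--(III) required at this stage. The Hasse invariant does not enter the definition of $\widetilde F_2^{(j)}$, $\widetilde G_2^{(j)}$, ${\textfrak e}^{(j)}$, or $[{\textfrak d}\,]$ at all, so Lemma 5.3.3 is irrelevant here; it is used elsewhere (Propositions 5.3.4--5.3.5, Theorem 5.3.8) where the weight $\omega=\varepsilon$ genuinely appears.
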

  
  \begin{proof} 
  All the assertions  except (4) can be proved in a way similar to Lemma 5.2.3. To prove (4), we first note that
  $GL_{m-1}({\bf Z}_2)  \backslash \widetilde \Omega(u_0 \bot 5B_1,i)=GL_{m-1}({\bf Z}_2)  \backslash \widetilde \Omega(u_0 \bot B_1,i)$ for 
  $i=0,\cdots,m-1.$ 
Hence it suffices to prove
$$\widetilde F_2^{(1)}((u_0 \bot 5B_1)[D^{-1}],\xi_0,X)= \widetilde F_2^{(1)}((u_0 \bot B_1)[D^{-1}],\xi_0,X)$$
for $D \in \widetilde \Omega(u_0 \bot B_1,i).$ 
We may assume $D$ is reduced. Since we have $u_0 \in {\bf Z}_2^*$ we have
$D=\mattwo(1;d;O;D_1)$ with $d \in M_{1,m-2}({\bf Z}_2)$ and $M_{m-2}({\bf Z}_2).$ We also note that $2D_1^{-1} \in M_{m-2}({\bf Z}_2).$
We have
$$\widetilde F_2^{(1)}((u_0 \bot 5B_1)[D^{-1}],\xi_0,X)=\widetilde F_2((1 \bot u_0 \bot 5B_1)[1 \bot D^{-1}],\xi_0,X)$$
$$=\widetilde F_2((1 \bot u_0 \bot 5B_1)\left[\smallmatthree(1;0;0;0;1;-dD_1^{-1};0;0;D_1^{-1}) \right],\xi_0,X)$$
$$=\widetilde F_2((5 \bot 5u_0 \bot B_1)\left[\smallmatthree(1;0;0;0;1;-dD_1^{-1};0;0;D_1^{-1}) \right],\xi_0,X).$$
We can easily see that there exits an element $U=(u_{ij}) \in GL_2({\bf Z}_2)$ such that $(1 \bot u_0)[U]=5 \bot 5u_0$ and $u_{12} \equiv 0, u_{22} \equiv 1 \ {\rm mod} \ 2.$ Then we have
$$\widetilde F_2^{(1)}((u_0 \bot 5B_1)[D^{-1}],\xi_0,X)=\widetilde F_2((1 \bot u_0 \bot B_1)[(1 \bot D^{-1})V],\xi_0,X),$$
where $V=\smallmatthree(u_{11};u_{12};-u_{12}dD_1^{-1};u_{21};u_{22};-u_{22}dD_1^{-1}+dD_1^{-1};0;0;1_{m-2}).$ By construction, we have $V \in GL_m({\bf Z}_2),$ and hence
we have 
$$\widetilde F_2((1 \bot u_0 \bot B_1)[(1 \bot D^{-1}])V],\xi_0,X)=\widetilde F_2^{(1)}((u_0 \bot B_1)[D^{-1}],\xi_0,X).$$
  \end{proof}

  \bigskip
  
 Let $\widetilde R_{n-1}(d_0,\omega,X,Y,t)$ be the formal power series defined at the beginning of Section 5. We express $\widetilde R_{n-1}(d_0,\omega,X,Y,t)$ in terms of  $Q_{2r}^{(0)}(n;d_0d, \omega, \chi(d), X,Y,t)$ and $Q_{2r+1}^{(1)}(n;d_0,\omega,1, X,Y,t).$ Henceforth, for $d_0 \in {\mathcal F}_p$ and non-negative integers $m,r$ such that $r \le m,$ put
 ${\mathcal U}(m,r,d_0)=\{1\},{\mathcal U} \cap \{d_0\},$ or ${\mathcal U}$ according as $r=0, \ r=m \ge 1,$ or $1 \le  r \le m-1.$ 
 Moreover, we sometimes abbreviate $S_r({\bf Z}_p)$ and $S_r({\bf Z}_p,d)$ as $S_{r,p}$ and $S_{r,p}(d),$ respectively.
Furthermore we abbreviate $S_r({\bf Z}_2)_x$ and $S_r({\bf Z}_2,d)_x$ as $S_{r,2;x}$ and $S_{r,2}(d)_x,$ respectively, for $x=e,o.$

\begin{thms}    
Let $d_0 \in {\mathcal F}_p$, and $\xi_0=\chi(d_0).$  For $d \in {\mathcal U}(n-1,n-2r-1,d_0)$ put
$$D_{2r}(d_0,d,Y,t)
={1-\xi_0 p^{-1/2}Y \over 1-p^{r-1/2}\chi(d) Y}(1-p^{-n-1/2+r}\chi(d)Yt^2) 
$$
\noindent
\begin{enumerate}
\item[{\rm (1)}] Let $\omega=\iota,$ or  $\nu(d_0)=0.$ Then  
$$\widetilde R_{n-1}(d_0,\omega,X,Y,t)$$
$$=\sum_{r=0}^{(n-2)/2} {\prod_{i=1}^{r} (1-p^{2i-1}Y^2) \prod_{i=1}^{(n-2r-2)/2}(1-p^{-2i-n-1}Y^2t^4) \over  2^{1-\delta_{0,r}} \phi_{(n-2r-2)/2}(p^{-2})}$$
$$ \times \sum_{d \in {\mathcal U}(n-1,n-2r-1,d_0)} D_{2r}(d_0,d,Y,t) Q_{2r}^{(0)}(n;d_0d, \omega, \chi(d), X,Y,t)$$
$$+ \sum_{r=0}^{(n-2)/2} {\prod_{i=1}^{r} (1-p^{2i-1}Y^2) \prod_{i=1}^{(n-2r-2)/2}(1-p^{-2i-n-1}Y^2t^4) \over \phi_{(n-2r-2)/2}(p^{-2})}$$
$$ \times  (1-\xi_0 p^{-1/2}Y) Q_{2r+1}^{(1)}(n;d_0,\omega,1, X,Y,t).$$

\item[{\rm (2)}] Let $\nu(d_0) > 0.$  Then  
$$\widetilde R_{n-1}(d_0,\varepsilon,X,Y,t)$$
$$= \sum_{r=0}^{(n-2)/2} {\prod_{i=1}^{r} (1-p^{2i-1}Y^2) \prod_{i=1}^{(n-2r-2)/2}(1-p^{-2i-n-1}Y^2t^4)  \over \phi_{(n-2r-2)/2}(p^{-2})}$$
$$ \times (1-\xi_0 p^{-1/2}Y) Q_{2r+1}^{(1)}(n;d_0,\varepsilon,1, X,Y,t).$$
\end{enumerate}

\end{thms}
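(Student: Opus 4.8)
The plan is to start from the defining expression for $\widetilde R_{n-1}(d_0,\omega,X,Y,t)$ and to reorganize the sum over $B'\in{\mathcal L}_{n-1,p}^{(1)}(d_0)$ according to the $p$-adic Jordan-type ``core'' of $B'$: for $p\neq 2$ one writes $B'\sim_{{\bf Z}_p}\Theta_{n-n_1-1,d}\bot pB_1$ with $d\in{\mathcal U}$ and $B_1\in S_{n_1}({\bf Z}_p)^{\times}$, while for $p=2$ one uses the normal form $B'\sim 2\Theta\bot B_1$ with $B_1$ of one of the three types (I), (II), (III) of Subsection~5.2. The decisive observation is that by Lemmas~5.2.1 and~5.2.3 the polynomials $G_p^{(1)}(B',p^{-(n+1)/2}Y)$ and $B_p^{(1)}(B',p^{-n/2-1}Yt^2)$ depend only on the rank $n_1$, on $\xi_0=\chi((-1)^{n/2}\det B')=\chi(d_0)$, and on $\overline\xi^{(1)}(B')$ — which vanishes when $n_1$ is odd. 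Hence these two factors come out of the inner sum over $B_1$ of fixed $n_1$, and their product, combined with the core contribution $1+\chi(\cdot)p^{-n_0/2}$ to the local density, collapses into the polynomial $D_{2r}(d_0,d,Y,t)$ and into the explicit products $\prod_{i=1}^{r-1}(1-p^{2i-1}Y^2)$ and $\prod_{i=1}^{(n-2r-2)/2}(1-p^{-2i-n-1}Y^2t^4)$.

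Next I would carry out the two reductions of the inner sum over $B_1$. First, the local density splits: by Lemma~5.1.3 for $p\neq 2$ (resp.\ Lemma~5.1.4 for $p=2$), $\alpha_p(B')$ equals $\alpha_p$ of the $p$-divisible part times an explicit rational factor, and it is this factor that produces the $\phi_{(n-2r-2)/2}(p^{-2})$ in the denominators of the statement. Second, by Lemma~5.3.6 for $p\neq 2$ (resp.\ Lemma~5.3.7 for $p=2$) the polynomial $\widetilde G_p^{(1)}(B',X,p^{-n}Yt^2)$ is rewritten as $\widetilde G_p^{(0)}(pB_1,\xi_0\chi(d),X,p^{-n}Yt^2)$ when $n_1$ is even and as $\widetilde G_p^{(1)}(pdB_1,\xi_0,X,p^{-n}Yt^2)$ when $n_1$ is odd. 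After these substitutions the inner sum over $B_1$ of rank $2r$ (resp.\ $2r+1$) becomes, by direct comparison with the definitions, the partial series $\widetilde Q_{2r}^{(0)}$ (resp.\ $\widetilde Q_{2r+1}^{(1)}$) of the statement, with $d$ entering through the discriminant class $d_0d$ and the sign $\chi(d)$ of the core; this part of the argument follows the lines of [\cite{I-K3}, Lemma~3.1].

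It then remains to reassemble the pieces. When $\omega=\iota$ or $\nu(d_0)=0$ all the accumulated scalar prefactors are explicit, and collecting them over the $r$-summation reproduces $D_{2r}(d_0,d,Y,t)$ and the displayed products, giving assertion~(1). When $\omega=\varepsilon$ and $\nu(d_0)>0$ one invokes the vanishing encoded in Propositions~5.3.4 and~5.3.5: by Lemma~5.3.3, replacing $B_1$ by $aB_1$ with $(a,d_0)_p=-1$ negates the Hasse invariant while fixing $\alpha_p$ and the relevant $\widetilde G$, so every even-rank (hence $\widetilde Q^{(0)}$) contribution cancels and only the $\widetilde Q_{2r+1}^{(1)}$-terms survive, which is assertion~(2).

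The main obstacle is the prime $p=2$. There the single odd-prime core is replaced by three normal-form types, the local density formula acquires the extra shape $u_0\bot 2U_1\bot 4B_2$ of Lemma~5.1.4, and matching the types with the $\widetilde Q^{(0)}$- and $\widetilde Q^{(1)}$-terms requires the counting identity of Lemma~5.1.5 relating $\#(\widetilde{\mathcal S}_{m,2}(B)/\sim)$ to $\sum_{B'}1/\alpha_2(B')$. Most delicate is the Hasse-invariant bookkeeping when $\omega=\varepsilon$: the formulas of Lemma~5.3.3~(2) for $\varepsilon$ at $2$ depend on $d_0\bmod 4$ and on the parity of $\nu_2(d_0)$, and one must check with care that the functions $\widetilde G_2^{(j)}(\cdot,\xi,X,\cdot)$, multiplied when $\omega=\varepsilon$ by the appropriate power of the Hasse invariant, genuinely satisfy the compatibility conditions (H-2-1)--(H-2-5), so that the reorganized inner sums are exactly the partial series $\widetilde Q^{(j)}$. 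Granting these points, parts~(1) and~(2) follow by collecting the $r$-summation.
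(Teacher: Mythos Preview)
Your proposal is correct and follows essentially the same approach as the paper's proof. The only organizational difference is that the paper packages your steps involving Lemmas 5.1.3/5.1.4, 5.3.3, and 5.3.6/5.3.7 into the abstract framework of the functions $H_{m-j,\xi}^{(j)}$ satisfying conditions (H-$p$-1)--(H-$p$-5) and then invokes Propositions 5.3.4 and 5.3.5 (where Lemma 5.1.5 is used) to pass from the Jordan-decomposed sum to the $\widetilde Q^{(j)}$ series, rather than doing this reduction inline; the ingredients and the logic are identical.
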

  
\begin{proof} Let $p\not=2.$ Let $B$ be a symmetric matrix of degree $2r$ or $2r+1$ with entries in ${\bf Z}_p.$ Then we note that $\Theta_{n-2r-2,d} \bot pB$ belongs to ${\mathcal L}_{n-1,p}(d_0)$ if and only if $B \in S_{2r+1,p}(p^{-1}d_0d) \cap S_{2r+1,p}$, and that  $\Theta_{n-2r-1,d} \bot pB$ belongs to ${\mathcal L}_{n-1,p}(d_0)$ if and only if $B \in S_{2r,p}(d_0d) \cap S_{2r,p}.$ Thus by the theory of Jordan decompositions, for $\omega=\varepsilon^l$ we have
  $$\widetilde R_{n-1}(d_0,\omega,X,Y,t)=\kappa(d_0,n-1,l)^{-1}(tY^{-1/2})^{\delta_{2,p}(2-n)}$$
  $$\times \left\{ \sum_{r=0}^{(n-2)/2} \sum_{d \in {\mathcal U}(n-1,n-2r-2,d_0)} \sum_{B' \in p^{-1}S_{2r+1,p}(d_0d) \cap S_{2r+1,p}} 
   {G_p^{(1)}(\Theta_{n-2r-2,d} \bot pB',p^{-(n+1)/2}Y)   \over \alpha_p(\Theta_{n-2r-2,d} \bot pB') } \right.$$
   $$ \times B_p^{(1)}(\Theta_{n-2r-2,d} \bot pB',p^{-n/2-1}Yt^2) \widetilde G_p^{(1)}(\Theta_{n-2r-2,d} \bot pB',1,X,p^{-n}t^2Y) $$
  $$ \times \omega(\Theta_{n-2r-2,d} \bot pB') (tY^{-1/2})^{\nu(\det (pB'))} $$
  $$+\sum_{r=0}^{(n-2)/2} \sum_{d \in {\mathcal U}(n-1,n-2r-1,d_0)} \sum_{B' \in S_{2r,p}(d_0d) \cap S_{2r,p}}
  {G_p^{(1)}(\Theta_{n-2r-1,d} \bot pB',p^{-(n+1)/2}Y)   \over \alpha_p(\Theta_{n-2r-1,d} \bot pB') } $$
   $$ \times  B_p^{(1)}(\Theta_{n-2r-1,d} \bot pB',p^{-n/2-1}Yt^2) \widetilde G_p^{(1)}(\Theta_{n-2r-1,d} \bot pB',1,X,p^{-n}t^2Y) $$
  $$ \times \left. \omega(\Theta_{n-2r-1,d} \bot pB')(tY^{-1/2})^{\nu(\det (pB'))} \right\}.$$
  By [\cite{K-K4},  Lemma 4.2.1] and Lemma  5.1.1 we have
  $$G_p^{(1)}(\Theta_{n-2r-2,d} \bot pB',p^{-(n+1)/2}Y)B_p^{(1)}(\Theta_{n-2r-2,d} \bot pB',p^{-n/2-1}Yt^2)$$
  $$=\prod_{i=1}^{r} (1-p^{2i-1}Y^2) \prod_{i=1}^{(n-2r-2)/2}(1-p^{-2i-n-1}Y^2t^4) (1-\xi_0 p^{-1/2}Y) ,$$
  and 
  $$G_p^{(1)}(\Theta_{n-2r-1,d} \bot pB',p^{-(n+1)/2}Y)B_p^{(1)}(\Theta_{n-2r-1,d} \bot pB',p^{-n/2-1}Yt^2)$$
  $$=\prod_{i=1}^{r-1} (1-p^{2i-1}Y^2) \prod_{i=1}^{(n-2r-2)/2}(1-p^{-2i-n-1}Y^2t^4) D_{2r}(d_0,d,Y,t) .$$
    Put $H_{2i-1,\xi}^{(1)}(B)=\widetilde G_p^{(1)}(B,\xi,X,p^{-n}t^2Y)$ for $B \in S_{2i-1}({\bf Z}_p)^{\times},$ and $H_{2i,\xi}^{(0)}(B)=\widetilde G_p^{(0)}(B,\xi,X,p^{-n}t^2Y)$ for $B \in S_{2i}({\bf Z}_p)^{\times}$ and $\xi=\pm 1.$ Then $H_{2i-1,\xi}^{(1)}$ and $H_{2i,\xi}^{(0)}$ are $GL_{2i-1}({\bf Z}_p)$ -invariant functions on $S_{2i-1}({\bf Z}_p)^{\times}$ with values in ${\bf C}[X,X^{-1},Y,Y^{-1},t]$ and  satisfy the conditions (H-$p$-1) $\sim$ (H-$p$-5) in Section 4 of \cite{K-K4} by virture of Lemma 5.2.3. Thus the assertion (1) in case  $p\not=2$ follows from [\cite{K-K4}, Propositions 4.3.3 and 4.3.4]. 
    
  Next let $p=2.$ Let $B$ be a symmetric matrix of degree $2r$ or $2r+1$ with entries in ${\bf Z}_2,$ and $d \in {\mathcal U}.$ We note that $2\Theta_{n-2r-2,d} \bot 4B$ belongs to ${\mathcal L}_{n-1,2}(d_0)$ if and only if $B \in S_{2r+1,2}(d_0d) \cap S_{2r+1,2}$, and that  $-d \bot 2\Theta_{n-2r-2} \bot 4B$ belongs to ${\mathcal L}_{n-1,2}(d_0)$ if and only if $B \in S_{2r+2,2}(d_0d) \cap S_{2r+2,2}.$ Then, similarly to above, we have 
  $$\widetilde R_{n-1}(d_0,\omega,X,Y,t)=\kappa(d_0,n-1,l,tY^{-1/2})^{-1}$$
  $$\times \left\{\sum_{r=0}^{(n-2)/2} \left( \sum_{d \in {\mathcal U}(n-1,n-2r-2,d_0)} \sum_{B' \in S_{2r+1,2}(d_0d) \cap S_{2r+1,2;e}} 
  G_p^{(1)}(2\Theta_{n-2r-2,d} \bot 4B',2^{-(n+1)/2}Y) \right. \right.$$
  $$ \times  B_p^{(1)}(2\Theta_{n-2r-2,d} \bot 4B',p^{-n/2-1}Yt^2)  {\widetilde G_2^{(1)}(2\Theta_{n-2r-2,d} \bot 4B',1,X,2^{-n}t^2Y) \over \alpha_2(2\Theta_{n-2r-2,d} \bot 4B') } $$
  $$ \times \omega(2\Theta_{n-2r-2,d} \bot 4B')(tY^{-1/2})^{\nu(\det (4B'))+n-2r-2} $$ 
      $$+\sum_{B' \in S_{2r+1,2}(d_0) \cap S_{2r+1,2;o}} G_p^{(1)}(2\Theta_{n-2r-2} \bot 4B',2^{-(n+1)/2}Y) $$
  $$ \times B_p^{(1)}(2\Theta_{n-2r-2} \bot 4B',p^{-n/2-1}Yt^2) {\widetilde G_2^{(1)}(2\Theta_{n-2r-2} \bot 4B',1,X,2^{-n}t^2Y) \over \alpha_2(2\Theta_{n-2r-2} \bot 4B') } $$
  $$ \times \omega(2\Theta_{n-2r-2} \bot 4B')(tY^{-1/2})^{\nu(\det (4B'))+n-2r-2} $$
   $$+ \sum_{B' \in S_{2r+2,2}(d_0) \cap  S_{2r+2,2;o}} G_p^{(1)}(-1 \bot 2\Theta_{n-2r-4} \bot 4B',2^{-(n+1)/2}Y) $$  
    $$ \times  B_p^{(1)}(-1 \bot 2\Theta_{n-2r-4} \bot 4B',p^{-n/2-1}Yt^2) {\widetilde G_2^{(1)}(-1 \bot 2\Theta_{n-2r-4} \bot 4B',1,X,2^{-n}t^2Y) \over \alpha_2(-1 \bot 2\Theta_{n-2r-4} \bot 4B') } $$
   $$ \times \left. \omega(-1 \bot 2\Theta_{n-2r-4} \bot 4B')(tY^{-1/2})^{\nu(\det (4B'))+n-2r-4} \right)$$
$$+\sum_{r=0}^{(n-2)/2}  \sum_{d \in {\mathcal U}(n-1,n-2r-1,d_0)} \sum_{B' \in S_{2r,2}(d_0d) \cap S_{2r,2;e}} G_p^{(1)}(-d \bot 2\Theta_{n-2r-2} \bot 4B',2^{-(n+1)/2}Y) $$
$$ \times B_p^{(1)}(-d \bot 2\Theta_{n-2r-2} \bot 4B',p^{-n/2-1}Yt^2) {\widetilde G_2^{(1)}(-d \bot 2\Theta_{n-2r-2} \bot 4B',1,X,2^{-n}t^2Y) \over \alpha_2(-d \bot 2\Theta_{n-2r-2} \bot 4B') } $$
     $$ \times \left. \omega(-d \bot 2\Theta_{n-2r-2} \bot 4B')(tY^{-1/2})^{\nu(\det (4B'))+n-2r-2}\right\}.$$
     Thus the assertion (1) in case $p=2$ can be proved by using [\cite {K-K4}, Lemma 4.2.1], Lemmas 5.1.1 and 5.2.4, and [\cite{K-K4} , Propositions 4.3.3 and 4.3.4] in the same way as above. 
   Similarly the assertion (2) can be proved.
 \end{proof}
  
 \bigskip
 
  Now to rewrite the above theorem, first we express $\widetilde P_{m-1}^{(1)}(n;d_0,\omega, \eta, X,Y,t)$ in terms of  $Q_{2r+1}^{(1)}(n;d_0, \omega, \eta , X,Y,t)$ and $Q_{2r}^{(0)}(n;d_0 d,\omega,\eta, X,Y,t).$ 
  
\begin{props}
Let $m$ be an even integer. Let $d_0 \in {\mathcal F}_p,$ and $\eta=\pm 1.$ 
   
    \noindent
{\rm (1)}   {\rm (1.1)}  Let $l=0$ or $\nu(d_0)=0.$  Then 
  $$\widetilde P_{m-1}^{(1)}(n;d_0,\varepsilon^l, \eta, X,Y,t) = \sum_{r=0}^{(m-2)/2}  { Q_{2r+1}^{(1)}(n;d_0, \varepsilon^l, \eta , X,Y,t) \over  \phi_{(m-2-2r)/2}(p^{-2})}$$
  $$ +\sum_{r=0}^{(m-2)/2} \sum_{d \in {\mathcal U}(m-1,m-1-2r,d_0)}{Q_{2r}^{(0)}(n;d_0 d,\varepsilon^l,\eta \chi(d), X,Y,t)  \over 
 2^{1-\delta_{0,r}} \phi_{(m-2-2r)/2}(p^{-2})} .$$
   {\rm (1.2)}  Let $\nu(d_0) \ge  1.$ Then 
$$Q_{2r}^{(0)}(n;d_0 d,\varepsilon,\eta \chi(d), X,Y,t)=0$$
for any $d$ and 
  $$\widetilde P_{m-1}^{(1)}(n;d_0,\varepsilon, \eta, X,Y,t)= \sum_{r=0}^{(m-2)/2}  { Q_{2r+1}^{(1)}(n;d_0, \varepsilon, \eta , X,Y,t)
  \over    \phi_{(m-2-2r)/2}(p^{-2})}.$$
   {\rm (2)}  {\rm (2.1)}  Let $l=0$ or $\nu(d_0)=0.$ Then 
        $$\widetilde P_m^{(0)}(n;d_0,\varepsilon^l,\eta,X,Y,t)$$
  $$=\sum_{r=0}^{m/2}  \sum_{d \in {\mathcal U}(m,m-2r,d_0)}{ 1 +p^{(-m+2r)/2} \chi(d) \over   2^{1-\delta_{0,r}+\delta_{0,m} } \phi_{(m-2r)/2}(p^{-2})} Q_{2r}^{(0)}(n;d_0 d,\varepsilon^l, \eta \chi(d),X,Y,t)  $$
       $$ +  \sum_{r=0}^{(m-2)/2} { 1  \over    \phi_{(m-2r)/2}(p^{-2})} Q_{2r+1}^{(1)}(n;d_0,\varepsilon^l,\eta ,X,Y,t).$$

\noindent        
  {\rm (2.2)} Let $\nu(d_0) > 0.$  Then 
 $$\widetilde P_m^{(0)}(n;d_0,\varepsilon,\eta,X,Y,t)=0.$$
\end{props}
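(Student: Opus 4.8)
The plan is to run, for the two modified Koecher--Maass series $\widetilde P_{m-1}^{(1)}$ and $\widetilde P_m^{(0)}$, the same sorting argument that was used to prove Theorem 5.3.8, and then to feed the output into Propositions 5.3.4 and 5.3.5. First I would introduce the $GL$-invariant functions $H_{2i-1,\xi}^{(1)}(B)=\widetilde G_p^{(1)}(B,\xi,X,p^{-n}t^2Y)$ on $S_{2i-1}({\bf Z}_p)^{\times}$ and $H_{2i,\xi}^{(0)}(B)=\widetilde G_p^{(0)}(B,\xi,X,p^{-n}t^2Y)$ on $S_{2i}({\bf Z}_p)^{\times}$; by Lemma 5.3.6 (for $p\neq 2$) and Lemma 5.3.7 (for $p=2$), together with the local density identities of Lemmas 5.1.3 and 5.1.4, these families satisfy conditions (H-$p$-1)--(H-$p$-5), resp.\ (H-2-1)--(H-2-5). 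The core step is then to partition the defining sum of $\widetilde P_{m-1}^{(1)}(n;d_0,\varepsilon^l,\eta,X,Y,t)$ over ${\mathcal L}_{m-1,p}^{(1)}(d_0)$ according to the rank of the leading unimodular block of a Jordan decomposition $B'\sim\Theta\bot pB_1$ of $B'$ (at $p=2$, according to the canonical form of Lemma 5.3.2); writing $n_1=\deg B_1$ and using that $m$ is even, one has $n_1=2r+1$ (leading block of even rank) or $n_1=2r$ (leading block of odd rank), $0\le r\le(m-2)/2$. Absorbing the $\Theta$-part by the relevant reduction identity from Lemma 5.3.6, resp.\ 5.3.7, and by Lemma 5.1.3, resp.\ 5.1.4, the $n_1=2r+1$ part becomes $Q^{(1)}(d_0,H_{m-1,\eta}^{(1)},2r+1,\varepsilon^l,tY^{-1/2})$ and the $n_1=2r$ part becomes a sum over $d\in{\mathcal U}(m-1,m-1-2r,d_0)$ of partial series of the form $Q^{(1)}(d_0,d,\cdot,2r,\varepsilon^l,tY^{-1/2})$ with the appropriate $H^{(0)}$-data; the $\kappa$-normalizations and the substitution $t\mapsto tY^{-1/2}$ match those built into the definitions of the $Q$'s.

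Next I would apply Proposition 5.3.5 ((1.1) and (1.2) when $p\neq 2$, (2.1) and (2.2) when $p=2$) to each of these partial series; this replaces every oversized leading block by a matched-degree block, introducing precisely the quotient factors $1/\phi_\bullet(p^{-2})$ occurring in the statement, and forcing the body-degree-$2r$ terms to vanish unless $l\nu(d_0)=0$. Finally, unwinding the definitions one checks $Q^{(1)}(d_0,H_{2r+1,\eta}^{(1)},2r+1,\varepsilon^l,tY^{-1/2})=Q_{2r+1}^{(1)}(n;d_0,\varepsilon^l,\eta,X,Y,t)$ and $Q^{(0)}(d_0d,1,H_{2r,\eta\chi(d)}^{(0)},2r,\varepsilon^l,tY^{-1/2})=Q_{2r}^{(0)}(n;d_0d,\varepsilon^l,\eta\chi(d),X,Y,t)$; substituting these gives (1.1). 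For (1.2), with $l=1$ and $\nu(d_0)\ge 1$, the body-degree-$2r$ contributions drop out by the vanishing clause of Proposition 5.3.5(1.2), resp.\ (2.2), leaving only the $Q_{2r+1}^{(1)}$-sum, whose coefficients $1/\phi_{(m-2-2r)/2}(p^{-2})$ come from Proposition 5.3.5(1.1), resp.\ (2.1) (which have no vanishing caveat); and the remaining assertion $Q_{2r}^{(0)}(n;d_0d,\varepsilon,\eta\chi(d),X,Y,t)=0$ follows directly, as in the second half of the proof of Proposition 5.3.4(1.2), by picking $a\in{\bf Z}_p^{*}$ with $(a,d_0d)_p=-1$: the involution $B\mapsto aB$ of the summation set preserves $\alpha_p$ and $\widetilde G_p^{(0)}$ but reverses the sign of $\varepsilon$, so the sum equals its own negative.

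The series $\widetilde P_m^{(0)}(n;d_0,\varepsilon^l,\eta,X,Y,t)$ is treated identically, now sorting the sum over ${\mathcal L}_{m,p}^{(0)}(d_0)$ by leading block: the body-degree-$2r$ part gives a sum over $d$ of partial series $Q^{(0)}(d_0,d,H_{m,\eta}^{(0)},2r,\varepsilon^l,tY^{-1/2})$, while the body-degree-$2r+1$ part (at $p=2$ realized through an extra orthogonal $2$-summand, cf.\ condition (H-2-3)) gives $Q^{(0)}(d_0,H_{m,\eta}^{(0)},2r+1,\varepsilon^l,tY^{-1/2})$. Applying Proposition 5.3.4 ((1.2) and (1.1), resp.\ (2.2) and (2.1)) -- which supplies the factor $1+p^{(2r-m)/2}\chi(d)$ -- and re-identifying the matched-degree series as before yields (2.1). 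When $\nu(d_0)>0$ and $l=1$, every one of these $Q$-contributions vanishes by the relevant clauses of Proposition 5.3.4, so $\widetilde P_m^{(0)}(n;d_0,\varepsilon,\eta,X,Y,t)=0$, which is (2.2).

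I expect the prime $p=2$ to be the main obstacle, exactly as it was in Propositions 5.3.4--5.3.5 and Theorem 5.3.8. Making the leading-block decomposition precise there requires the canonical forms of Lemma 5.3.2 with all of conditions (c.1)--(c.6), a separate treatment of the several sub-types of the level-one Jordan block (the $\pm1$- and $2$-entries giving the types (I)--(III) of the $2$-adic canonical form, cf.\ Lemma 5.2.1, and entering conditions (H-2-2)--(H-2-4)), accurate tracking of the Hasse-invariant factors via Lemma 5.3.3, and the $2$-adic local density formulas of Lemma 5.1.4; only once all of this is organized do the three pieces $Q^{(11)},Q^{(12)},Q^{(13)}$ making up $Q^{(1)}$ at $p=2$ recombine into the clean statement. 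The case $p\neq 2$ then follows as the routine specialization.
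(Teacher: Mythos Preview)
Your proposal is correct and follows exactly the approach the paper intends: the paper's proof of this proposition consists of the single sentence ``The assertion can be proved in a way similar to Theorem 5.3.8,'' and your write-up is precisely that similarity spelled out---Jordan-block decomposition of the defining sum, introduction of the $H^{(j)}_{\bullet,\xi}$ via $\widetilde G_p^{(j)}$, verification of (H-$p$-1)--(H-$p$-5) through Lemmas 5.3.6/5.3.7, and reduction by Propositions 5.3.4 and 5.3.5. The only simplification relative to Theorem 5.3.8 (which you implicitly use) is that here there are no $G_p^{(1)}\cdot B_p^{(1)}$ factors to produce the polynomials $D_{2r}$, so the output coefficients are just the bare $1/\phi_\bullet(p^{-2})$ and $(1+p^{(2r-m)/2}\chi(d))/(2\phi_\bullet(p^{-2}))$ coming from Propositions 5.3.5 and 5.3.4 respectively.
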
  

\begin{proof}
 The assertion can be proved in a way similar to Theorem 5.2.5. 
 \end{proof}

\begin{xcor}
Let $r$ be a non-negative integer. Let $d_0$ be an element of ${\mathcal F}_{p}$ and $\xi=\pm 1.$ 
 
  \noindent
  {\rm (1)} Let $l=0$ or $\nu(d_0)=0.$ Then  
 $$Q_{2r}^{(0)}(n;d_0,\varepsilon^l,\xi, X,Y,t)$$
 $$=\sum_{m=0}^r \sum_{d \in {\mathcal U}(2r,2m,d_0)} {(-1)^m (\chi(d) +p^{-m})p^{-m^2} \over 2^{1-\delta_{0,r-m}+\delta_{0,r}}\phi_m(p^{-2}) }\widetilde P_{2r-2m}^{(0)}(n;d_0 d , \varepsilon^l,\xi \chi(d), X,Y,t)$$
$$+ \sum_{m=0}^{r-1}  {(-1)^{m+1} p^{-m-m^2} \over \phi_m(p^{-2}) }\widetilde P_{2r-2m-1}^{(1)}(n;d_0 , \varepsilon^l,\xi , X,Y,t)),$$
and  
 $$Q_{2r+1}^{(1)}(n;d_0,\varepsilon^l,\xi,X,Y,t)$$
 $$=\sum_{m=0}^r  {(-1)^m p^{-m-m^2} \over \phi_m(p^{-2}) }\widetilde P_{2r-2m+1}^{(1)}(n;d_0,\varepsilon^l,\xi,X,Y,t)$$
$$+ \sum_{m=0}^{r} \sum_{d \in {\mathcal U}(2r+1,2m+1,d_0)} {(-1)^{m+1} p^{-m-m^2} \over 2^{1-\delta_{0,r-m}}\phi_m(p^{-2}) }\widetilde P_{2r-2m}^{(0)}(n;d_0 d ,\varepsilon^l,\xi \chi(d),X,Y,t)).$$

\noindent
{\rm (2)} Let $\nu(d_0) >0.$ We have 
 $$Q_{2r+1}^{(1)}(n;d_0,\varepsilon,\xi,X,Y,t)=\sum_{m=0}^r  {(-1)^m p^{m-m^2} \over \phi_m(p^{-2}) }\widetilde P_{2r+1-2m}^{(1)}(n;d_0,\varepsilon,\xi,X,Y,t),$$ and
 $$Q_{2r}^{(0)}(n;d_0,\varepsilon,\xi,X,Y,t)=0.$$
\end{xcor}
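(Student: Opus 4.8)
The plan is to obtain the Corollary by inverting the triangular linear relations of Proposition 5.3.9. Fix $d_0\in{\mathcal F}_p$, the sign $\xi=\pm1$, and the parameters $X,Y,t$, and for each level $r$ regard $Q_{2r+1}^{(1)}(n;d_0,\varepsilon^l,\xi,X,Y,t)$ together with the twisted data $Q_{2r}^{(0)}(n;d_0d,\varepsilon^l,\xi\chi(d),X,Y,t)$, $d\in{\mathcal U}$, as the unknowns, and the series $\widetilde P_{2s+1}^{(1)}$, $\widetilde P_{2s}^{(0)}$ as given. Proposition 5.3.9 expresses each $\widetilde P$ of level $s$ as a finite sum of the $Q$'s of levels $r\le s$ whose coefficients are, apart from the $d$-summations and the twisting factor occurring in the $(0)$-part of assertion (2.1), the reciprocals $1/\phi_{s-r}(p^{-2})$; hence the system is unitriangular and has a unique solution, which is exactly what the Corollary records. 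The rigorous verification I have in mind is to substitute the claimed right-hand sides into the identities of Proposition 5.3.9, interchange the order of summation, and collapse the inner sums by the $q$-binomial identity of Lemma 5.1.6, so that the contribution of each level $r'$ reduces to $\delta_{r,r'}$ times the desired $Q$.

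I would first dispose of assertion (2), the case $\nu(d_0)>0$ with $\omega=\varepsilon$. By Proposition 5.3.9(1.2),(2.2) every $Q_{2r}^{(0)}(n;d_0d,\varepsilon,\cdots)$ and every $\widetilde P_{2r}^{(0)}(n;d_0,\varepsilon,\cdots)$ vanishes, so only the single-family relation $\widetilde P_{2s+1}^{(1)}=\sum_{r=0}^{s}Q_{2r+1}^{(1)}/\phi_{s-r}(p^{-2})$ survives. This is inverted by the classical $q$-binomial inversion: the sequence $1/\phi_k(p^{-2})$ has for its reciprocal the sequence $(-1)^m p^{m-m^2}/\phi_m(p^{-2})$, by Euler's identity $\prod_{i\ge0}(1-xq^i)=\sum_m(-1)^m q^{m(m-1)/2}x^m/\phi_m(q)$ at $q=p^{-2}$, which is precisely the coefficient appearing in assertion (2). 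At the level of this paper this is the specialisation $Q\to1$ of Lemma 5.1.6 with the product $\prod_i(1-p^{2i-1}Y^2)$ collapsed, and the vanishing $Q_{2r}^{(0)}=0$ is part of Proposition 5.3.9(1.2).

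For assertion (1) ($l=0$ or $\nu(d_0)=0$) the inversion couples the two families, since $\widetilde P^{(1)}$ depends on both $Q^{(1)}$ and $Q^{(0)}$, and $\widetilde P^{(0)}$ on both $Q^{(0)}$ and $Q^{(1)}$. I would first pass to the $\chi$-isotypic components of the $d$-sums, using multiplicativity of $\chi$ and that the index sets ${\mathcal U}(2r,2m,d_0)$ are the standard ones, with the endpoint conventions ${\mathcal U}(2r,0,d_0)=\{1\}$ and ${\mathcal U}(2r,2r,d_0)={\mathcal U}\cap\{d_0\}$; this splits the twisting factor of Proposition 5.3.9(2.1) into a trivial and a $\chi$-piece and turns the problem into the inversion of a $2\times2$ block-triangular system. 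Solving it by Lemma 5.1.6 yields the coefficients in (1): the splitting of each answer into a $\widetilde P^{(0)}$-sum and a $\widetilde P^{(1)}$-sum is governed by the off-diagonal blocks; the power $p^{-m-m^2}$ throughout (1), in contrast to $p^{m-m^2}$ in (2), reflects that the presence of the $(0)$-family shifts the relevant $q$-binomial identity (the reciprocal of the shifted sequence being $\prod_{i\ge1}(1-xq^i)=\sum_m(-1)^m q^{m(m+1)/2}x^m/\phi_m(q)$); and recombining the two $\chi$-isotypic pieces produces the factor $(\chi(d)+p^{-m})$ in the coefficient of $\widetilde P_{2r-2m}^{(0)}$ in the formula for $Q_{2r}^{(0)}$. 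Each of these I would confirm by the resubstitution of the first paragraph.

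The main obstacle is organisational rather than conceptual: since the inversion does not split into two independent triangular problems, one must solve a $2\times2$ block system whose off-diagonal blocks carry the $\chi(d)$-twists, the factors $\frac{1}{2}$, and the degree-dependent powers of $p$. Aligning the arguments $d_0d$, the characters $\xi\chi(d)$ and the summation ranges across the two families, and checking that the degenerate levels --- $r=0$, and the ``full'' level where the diagonal $\Theta$-block in the Jordan decomposition is empty --- are each counted exactly once, is the delicate part; once that bookkeeping is in place, every step is an instance of Lemma 5.1.6, equivalently of Euler's $q$-binomial identities.
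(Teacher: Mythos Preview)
Your approach is correct and essentially the same as the paper's: both invert the unitriangular linear system of Proposition 5.3.9, the paper by induction on $r$ (rewriting $Q_{2r+1}^{(1)}$ as $\widetilde P_{2r+1}^{(1)}$ minus lower-level $Q$'s and then invoking the induction hypothesis plus a ``direct calculation''), and you by substituting the claimed formulas back and collapsing the resulting convolutions with the Euler $q$-binomial identity. The paper's ``direct calculation'' is exactly the identity $\sum_{m=0}^{k}(-1)^m q^{m(m-1)/2}/(\phi_m(q)\phi_{k-m}(q))=\delta_{k,0}$ (with $q=p^{-2}$) that you isolate, so the two arguments differ only in presentation.
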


\begin{proof}  We prove the assertion (1) by induction on $r.$ Clearly the assertion holds for $r=0.$ Let $r \ge 1$ and suppose that the assertion holds for any $r' <r.$ Fix $l$ and we simply write
$Q_{2i-j}^{(j)}(n;d,\varepsilon^l,\xi,X,Y,t)$ and $\widetilde P_{2i-j}^{(j)}(n;d,\varepsilon^l,\xi,X,Y,t)$ as $Q_{2i-j}^{(j)}(d;\xi)$ and 
 $\widetilde P_{2i-j}^{(j)}(d;\xi),$ respectively. 
 Then by Proposition 5.2.6 and the induction hypothesis we have
$$Q_{2r+1}^{(1)}(d_0; \xi)=\widetilde P_{2r+1}^{(1)}(d_0; \xi)$$
$$ - \sum_{i=1}^{r}  {1 \over  \phi_{i}(p^{-2})} \left\{\sum_{j=0}^{r-i} {(-1)^jp^{-j-j^2} \over \phi_j(p^{-2} )} \widetilde P_{2r-2i-2j+1}^{(1)}(d_0;\xi) \right.$$
 $$\left. +  \sum_{j=0}^{r-i} \sum_{d' \in {\mathcal U}(2r-2i+1,2j+1,d_0)} {(-1)^{j+1}p^{-j-j^2} \over 2^{1-\delta_{0,r-i-j}}\phi_j(p^{-2})}
 \widetilde P_{2r-2i-2j}^{(0)}(d_0d';\xi\chi(d'))\right\}$$
 $$ -\sum_{i=0}^{r-i} \sum_{d \in {\mathcal U}(2r+1,2i+1,d_0)}{ 1  \over 
 2^{1-\delta_{0,r-i}} \phi_{i}(p^{-2})} $$
$$ \times \left\{\sum_{j=0}^{r-i} \sum_{ d' \in {\mathcal U}(2r-2i,2j,d_0d)} {(-1)^j (\chi(d')+p^{-j})p^{-j^2} \over 2^{1-\delta_{0,r-i-j}} \phi_j(p^{-2})} \widetilde P_{2r-2i-2j}^{(0)}(d_0 dd';\xi \chi(d)\chi(d'))\right.$$
$$\left. +\sum_{j=0}^{r-i-1}  {(-1)^{j+1}p^{-j-j^2} \over  \phi_j(p^{-2})} \widetilde P_{2r-2i-2j-1}^{(1)}(d_0 d;\xi \chi(d))\right\}$$
By Proposition 5.2.1 and Corollary to Theorem 5.2.2 we have
$$\widetilde P_{2r-2i-2j-1}^{(1)}(d_0 d;\xi \chi(d))=\widetilde P_{2r-2i-2j-1}^{(1)}(d_0;\xi ) $$
for $d \in {\mathcal U}(2r+1,2i+1,d_0)$ and hence
$$\sum_{d \in {\mathcal U}(2r+1,2i+1,d_0)}\widetilde P_{2r-2i-2j-1}^{(1)}(d_0 d;\xi \chi(d))=0.$$
Moreover we have 
$$ \sum_{d \in {\mathcal U}(2r+1,2i+1,d_0)}\sum_{ d' \in {\mathcal U}(2r-2i,2j,d_0d)}
{(-1)^j (\chi(d')+p^{-j})p^{-j^2} \over  2^{1-\delta_{0,r-i}} 2^{1-\delta_{0,r-i-j}} } \widetilde P_{2r-2i-2j}^{(0)}(d_0 dd';\xi \chi(d)\chi(d'))$$
$$=\sum_{ d'' \in {\mathcal U}(2r,2i+2j,d_0)}
{(-1)^j p^{-j-j^2} \over  2^{1-\delta_{0,r-i-j}} } \widetilde P_{2r-2i-2j}^{(0)}(d_0 d'';\xi \chi(d'')).$$
Hence we have
$$Q_{2r+1}^{(1)}(d_0;\xi)=\widetilde P_{2r+1}^{(1)}(d_0;\xi)+\sum_{m=1}^r \widetilde P_{2r-2m+1}(d_0;\xi)A_m$$
$$-\sum_{m=1}^r\sum_{d \in {\mathcal U}(2r+1,2m+1,d_0)} {1 \over 2^{1-\delta_{0,r-m}}} \widetilde P_{2r-2m}^{(0)}(d_0d;\xi\chi(d))A_m$$
$$-\sum_{m=0}^r\sum_{d \in {\mathcal U}(2r,2m,d_0)}  {1 \over 2^{1-\delta_{0,r-m}}} \widetilde P_{2r-2m}^{(0)}(d_0d;\xi\chi(d))B_m,$$
where $\displaystyle A_m=-\sum_{j=0}^{m-1} {(-1)^jp^{-j-j^2} \over \phi_{m-j}(p^{-2})\phi_j(p^{-2})} ,$ and 
$\displaystyle B_m=-\sum_{j=0}^{m} {(-1)^jp^{-j-j^2} \over \phi_{m-j}(p^{-2})\phi_j(p^{-2})} .$
We have $\displaystyle A_m={(-1)^mp^{-m-m^2} \over \phi_m(p^{-2})}$ for $m \ge 1,$ and $B_m=1$ or $0$ according as
$m=0$ or $m \ge 1.$ Thus  we get the desired result for $Q_{2r+1}^{(1)}(n;d_0,\varepsilon^l, 
 \xi, X,Y,t).$ We also get the result for $Q_{2r}^{(0)}(n;d_0,\varepsilon^l, 
 \xi, X,Y,t),$ and this completes the induction. Similarly the assertion (2) can be proved.

\end{proof}

 The following lemma follows from [\cite{I-K3}, Lemma 3.4]:
\begin{lems}
Let $l$ be a positive integer, and $q,U$ and $Q$ variables. Then 
\begin{eqnarray*}
\lefteqn{
\prod_{i=1}^l (1-U^{-1}Qq^{-i+1})U^l } \\
&=&\sum_{m=0}^l {\phi_l(q^{-1}) \over \phi_{l-m}(q^{-1}) \phi_m(q^{-1})}\prod_{i=1}^{l-m} (1-Qq^{-i+1})\prod_{i=1}^m (1-Uq^{i-1}) (-1)^mq^{(m-m^2)/2}.
\end{eqnarray*}
\end{lems}

The following corollary follows directly from the above lemma, and will be used  in the proof of Theorem 5.2.8.

\begin{xcor}
 Let $t$ and $Y$ be variables, and $p$ a prime number.


{\rm (1)} For a non-negative integers $l \le (n-2)/2$ and $i_0$ we have
$$ \sum_{m=0}^{(n-2-2l)/2}(-1)^m p^{m-m^2}{\prod_{i=i_0}^{l+m-1} (1-p^{2i-1}Y^2) \prod_{i=1}^{(n-2l-2m-2)/2}(1-p^{-2i-n-1}Y^2t^4) \over \phi_m(p^{-2})\phi_{(n-2-2l)/2-m}(p^{-2})}$$
$$={\prod_{i=i_0}^{l-1}(1-p^{2i-1}Y^2)\prod_{i=1}^{(n-2l-2)/2} (1-p^{-2l-n-2i}t^4)(p^{2l-1}Y^2)^{(n-2l-2)/2} \over \phi_{(n-2-2l)/2}(p^{-2})}. $$

{\rm (2)} For non-negative integer $l \le (n-2)/2$ we have
$$ \sum_{m=0}^{(n-2-2l)/2}(-1)^m p^{m-m^2}{\prod_{i=1}^{l+m} (1-p^{2i-1}Y^2) \prod_{i=1}^{(n-2l-2m-2)/2}(1-p^{-2i-n-1}Y^2t^4) \over \phi_m(p^{-2})\phi_{(n-2-2l)/2-m}(p^{-2})}$$
$$={\prod_{i=1}^{l}(1-p^{2i-1}Y^2)\prod_{i=1}^{(n-2l-2)/2} (1-p^{-2l-n-2i-2}t^4)(p^{2l+1}Y^2)^{(n-2l-2)/2} \over \phi_{(n-2-2l)/2}(p^{-2})}.  $$

{\rm (3)} For a non-negative  integer $l \le (n-4)/2$ we have
$$ \sum_{m=0}^{(n-4-2l)/2}(-1)^m p^{m-m^2}{\prod_{i=1}^{l+m} (1-p^{2i-1}Y^2) \prod_{i=1}^{(n-2l-2m-4)/2}(1-p^{-2i-n-1}Y^2t^4) \over \phi_m(p^{-2})\phi_{(n-4-2l)/2-m}(p^{-2})}$$
$$={\prod_{i=1}^{l}(1-p^{2i-1}Y^2)\prod_{i=1}^{(n-2l-4)/2} (1-p^{-2l-n-2i-2}t^4)(p^{2l+1}Y^2)^{(n-2l-4)/2} \over \phi_{(n-4-2l)/2}(p^{-2})}  .$$
Throughout {\rm  (1)} $\sim$ {\rm (3)}, we understand that the product $\prod_{i=a}^b( * )=1$ if $a>b.$

\end{xcor}

\begin{thms}  
  Let the notation be as in Theorem 5.2.5. \\
{\rm (1)} Suppose  that $\nu(d_0)=0.$  
Put $\xi_0=\chi(d_0).$ Then   
$$\widetilde R_{n-1}(d_0,\omega,X,Y,t)=(1-p^{-n}t^2) $$
$$ \times \{\sum_{l=0}^{(n-2)/2}\sum_{d \in {\mathcal U}(n-1,n-1-2l,d_0)}\widetilde P_{2l}^{(0)}(n;d_0d,\omega,\chi(d),X,Y,t)  \prod_{i=1}^{(n-2-2l)/2}(1-p^{-2l-n-2i}t^4) $$
$$ \times { (p^{2l-1}Y^2)^{(n-2l-2)/2} \prod_{i=0}^{l-1} (1-p^{2i-1}Y^2)  p^{l-1/2}\chi(d)Y(1+\chi(d)Yp^{l-1/2}) \over 2^{1-\delta_{0,l}}(1+\xi_0p^{-1/2}Y)\phi_{(n-2l-2)/2}(p^{-2})} $$
$$+\sum_{l=0}^{(n-4)/2}  \widetilde P_{2l+1}^{(1)}(n;d_0,\omega,1,X,Y,t) \prod_{i=2}^{(n-4-2l)/2}(1-p^{-2l-n-2i}t^4)$$
$$ \times   { (p^{2l-1}Y^2)^{(n-2l-2)/2} \prod_{i=1}^{l} (1-p^{2i-1}Y^2) (1-\xi_0 p^{-1/2}Y) (1+p^{-2l-2}t^2) \over \phi_{(n-2l-2)/2}(p^{-2})} \}.$$
\noindent
{\rm (2)} Suppose  that $\nu(d_0) >0$ and  $\omega=\iota.$ 
Put $\xi_0=\chi(d_0).$ Then   
$$\widetilde R_{n-1}(d_0,\omega,X,Y,t)=(1-p^{-n}t^2) $$
$$ \times \{\sum_{l=1}^{(n-2)/2}\sum_{d \in {\mathcal U}(n-1,n-1-2l,d_0)}\widetilde P_{2l}^{(0)}(n;d_0d,\omega,\chi(d),X,Y,t)  \prod_{i=1}^{(n-2-2l)/2}(1-p^{-2l-n-2i}t^4) $$
$$ \times { (p^{2l-1}Y^2)^{(n-2l-2)/2} \prod_{i=1}^{l-1} (1-p^{2i-1}Y^2)  p^{l-1/2}\chi(d)Y(1+\chi(d)Yp^{l-1/2}) \over 2 \phi_{(n-2l-2)/2}(p^{-2})} $$
$$+\sum_{l=0}^{(n-4)/2}  \widetilde P_{2l+1}^{(1)}(n;d_0,\omega,1,X,Y,t) \prod_{i=2}^{(n-4-2l)/2}(1-p^{-2l-n-2i}t^4)$$
$$ \times   { (p^{2l-1}Y^2)^{(n-2l-2)/2} \prod_{i=1}^{l} (1-p^{2i-1}Y^2)  (1+p^{-2l-2}t^2) \over \phi_{(n-2l-2)/2}(p^{-2})} \}.$$
\noindent
{\rm (3)} Suppose that $\nu(d_0) >0$ and $\omega=\varepsilon.$ Then   
$$\widetilde R_{n-1}(d_0,\omega,X,Y,t)=\sum_{l=0}^{(n-2)/2}  \widetilde P_{2l+1}^{(1)}(n;d_0,\omega,1,X,Y,t)   $$
$$ \times {(p^{2l+1}Y^2)^{(n-2l-2)/2} \prod_{i=1}^{l} (1-p^{2i-1}Y^2)\prod_{i=1}^{(n-2-2l)/2}(1-p^{-2l-n-2i-2}t^4) \over \phi_{(n-2-2l)/2}(p^{-2})}.$$
\end{thms}

\begin{proof} Suppose that $\nu(d_0)=0$ or $\omega=\iota.$ Then by (1) of Theorem 5.2.5 and (1) of Corollary to Proposition 5.2.6, we have $$\widetilde R_{n-1}(d_0,\omega;X,Y,t)$$
 $$=\sum_{r=0}^{(n-2)/2} {\prod_{i=1}^{r} (1-p^{2i-1}Y^2) \prod_{i=1}^{(n-2r-2)/2}(1-p^{-2i-n-1}Y^2t^4) \over 2^{1-\delta_{0,r}}\phi_{(n-2r-2)/2}(p^{-2})}$$
$$ \times \sum_{d_1 \in {\mathcal U}(n-1,n-2r-1,d_0)} D_{2r}(d_0,d_1,Y,t)\{\sum_{m=0}^r \sum_{d_2 \in {\mathcal U}(2r,2m,d_0d_1)} {(-1)^m (\chi(d_2) +p^{-m})p^{-m^2} \over 2^{1-\delta_{0,r-m}+\delta_{0,r}}\phi_m(p^{-2}) }$$
$$ \times \widetilde P_{2r-2m}^{(0)}(n;d_0 d_1 d_2 , \omega, \chi(d_1)\chi(d_2), X,Y;t)$$
$$+ \sum_{m=0}^{r-1} {(-1)^{m+1} p^{-m-m^2} \over \phi_m(p^{-2}) }\widetilde P_{2r-2m-1}^{(1)}(n;d_0d_1,\omega, \chi(d_1), X,Y;t))\}$$
$$ +  \sum_{r=0}^{(n-2)/2} {\prod_{i=1}^{r} (1-p^{2i-1}Y^2) \prod_{i=1}^{(n-2r-2)/2}(1-p^{-2i-n-1}Y^2t^4) \over 2^{1-\delta_{0,r}}\phi_{(n-2r-2)/2}(p^{-2})} $$  
$$\times (1-\xi_0p^{-1/2}Y) \{\sum_{m=0}^r  {(-1)^m p^{-m}p^{-m^2} \over \phi_m(p^{-2}) }\widetilde P_{2r+1-2m}^{(1)}(n;d_0 ,\omega,1, X,Y,t)$$
$$+ \sum_{m=0}^{r} \sum_{d_2 \in {\mathcal U}(2r+1,2m+1,d_0)} {(-1)^{m+1} p^{-m-m^2} \over 2^{1-\delta_{0,r-m}}\phi_m(p^{-2}) }\widetilde P_{2r-2m}^{(0)}(n;d_0  d_2, \omega, \chi(d_2),X,Y,t)\}.$$
By Proposition 5.2.1 and Corollary to Theorem 5.2.2,  for any $d_1 \in {\mathcal U}$ we have 
$$\widetilde P_{2r+1-2m}^{(1)}(n;d_0 d_1, \omega,\chi(d_1),X,Y,t)=\widetilde P_{2r+1-2m}^{(1)}(n;d_0, \omega,1,X,Y,t).$$
Moreover, if $r >m \ge 0,$ then ${\mathcal U}(n-1,n-2r-1,d_0)={\mathcal U}(2r+1,2m+1,d_0d_1)= {\mathcal U}.$  
Hence  \\
{\rm (A)} \qquad $\widetilde R_{n-1}(d_0,\omega,X,Y,t)$\\
$$=\sum_{m=0}^{(n-2)/2} S(n;m,d_0,Y)  {\prod_{i=1}^{m}(1-p^{2i-1}Y^2)\prod_{i=1}^{(n-2-2m)/2}(1-p^{-2i-n-1}Y^2t^4)p^{-m^2}(-1)^m   \over \phi_m(p^{-2})\phi_{(n-2)/2-m}(p^{-2})} $$
$$+\sum_{l=1}^{(n-2)/2}  
\sum_{d \in {\mathcal U}} \widetilde P_{2l}^{(0)}(n;d_0 d,\omega,\chi(d),X,Y,t)$$
$$ \times \sum_{m=0}^{(n-2-2l)/2}\{{1 \over 2}\sum_{d_1 \in {\mathcal U}(2l+2m,2m,d_0)} D_{2l+2m}(d_0, d_1,Y,t)(\chi(d_1)\chi(d) +p^{-m})(-1)^mp^{-m^2}$$
$$-  (1-\xi_0 p^{-1/2}Y) (-1)^mp^{-m-m^2}\}$$
$$ \times {\prod_{i=1}^{l+m} (1-p^{2i-1}Y^2) \prod_{i=1}^{(n-2l-2m-2)/2}(1-p^{-2i-n-1}Y^2t^4) \over 2\phi_m(p^{-2})\phi_{(n-2-2l)/2-m}(p^{-2})}$$
$$+\sum_{l=0}^{(n-2)/2}  \widetilde P_{2l+1}^{(1)}(n;d_0, \omega, 1, X,Y,t)   $$
$$ \times \{\sum_{m=0}^{(n-2-2l)/2}  ( (1-\xi_0 p^{-1/2}Y)(-1)^mp^{-m-m^2})$$
$$ \times {\prod_{i=1}^{l+m} (1-p^{2i-1}Y^2) \prod_{i=1}^{(n-2l-2m-2)/2}(1-p^{-2i-n-1}Y^2t^4) \over \phi_m(p^{-2})\phi_{(n-2-2l)/2-m}(p^{-2})}$$
$$-\sum_{m=0}^{(n-4-2l)/2}  {1 \over 2} \sum_{d \in {\mathcal U}}D_{2l+2m+2}(d_0,d,Y,t)(-1)^m p^{-m-m^2}$$
$$ \times {\prod_{i=1}^{l+m} (1-p^{2i-1}Y^2) \prod_{i=1}^{(n-2l-2m-4)/2}(1-p^{-2i-n-1}Y^2t^4) \over \phi_m(p^{-2})\phi_{(n-4-2l)/2-m}(p^{-2})}\},$$
where 
$$S(n;m,d_0,Y)=\sum_{d_1 \in {\mathcal U}(n-1,n-2m-1,d_0)} {(\chi(d_0d_1)+p^{-m}) D_{2m}(d_0,d_1,Y,t) \over 2}
-(1-\xi_0 p^{-1/2}Y)p^{-m}  \ {\rm or} \ 0$$
according as $\nu(d_0)=0$ or not. We  have 
$$(1-\xi_0 p^{-1/2}Y)p^{-m}(1-p^{-2n+2l+2m+1}Y^2t^4)$$
$$-{1 \over 2}\sum_{d_1 \in {\mathcal U}} D_{2l+2m+2}(d_0,d_1,Y,t)p^{-m}(1-p^{-2n+2m+2l+2})(1-p^{2l+2m+1}Y^2)$$
$$=(1-p^{-1/2}\xi_0 Y)p^{-n+m+2l+2}(1-p^{-2n+2m+2l+1}Y^2t^4)$$
$$+(1-p^{-1/2}\xi_0 Y)(1-p^{-n+2m+2l+2})p^{2l+m-n+1}Y^2t^2(1-p^{-n}t^2)$$
for any $ 0 \le l \le (n-2)/2$ and $0 \le m \le (n-2l-2)/2.$
Furthermore  we have  
$${1 \over 2}\sum_{d_1 \in {\mathcal U}} D_{2l+2m}(d_0,d_1,Y,t)(\chi(d_1)\chi(d) +p^{-m}) -(1-\xi_0 p^{-1/2}Y) p^{-m}$$
$$={\chi(d)(1-p^{-1/2}\xi_0 Y)(1-p^{-n}t^2)p^{l+m-1/2}Y(1+\chi(d) Yp^{l-1/2}) \over 1-p^{2l+2m-1}Y^2}$$
for any $1 \le l \le (n-2)/2, 0 \le m \le (n-2l-2)/2$ and $d \in {\mathcal U}.$
Suppose that  $\nu(d_0)=0.$ Then for any  $m$ we have
$$  {1 \over 2}\sum_{d_1 \in {\mathcal U}(n-1,n-2m-1,d_0)}  D_{2m}(d_0,d_1,Y,t)(\chi(d_1)\chi(d_0)+p^{-m})-(1-\xi_0 p^{-1/2}Y) p^{-m}$$
$$= {\xi_0p^{m-1/2}Y(1-p^{-1}Y^2)(1-p^{-n}t^2) \over 1-p^{2m-1}Y^2}.$$
Remark that ${\mathcal U}(n-1,n-2l-1,d_0)={\mathcal U}$ for $l >0$ and ${\mathcal U}(n-1,n-1,d_0)=\{ d_0 \}.$ Hence  
$$\widetilde R_{n-1}(d_0,\omega,X,Y,t)=\xi_0 p^{-1/2}Y(1-p^{-n}t^2)$$
$$ \times \sum_{l=0}^{(n-2)/2} \sum_{d \in {\mathcal U}(n-1,n-2l-1,d_0)}\widetilde P_{2l}^{(0)}(n;d_0 d,\omega,\chi(d),X,Y,t)$$
$$ \times(1-p^{-n}t^2)p^{l-1/2}\chi(d) Y {1+\chi(d) Y p^{l-1/2} \over  1+\xi_0p^{-1/2}Y}$$
$$ \times \sum_{m=0}^{(n-2-2l)/2}(-1)^m p^{m-m^2}{\prod_{i=0}^{l+m-1} (1-p^{2i-1}Y^2) \prod_{i=1}^{(n-2l-2m-2)/2}(1-p^{-2i-n-1}Y^2t^4) \over 2^{1-\delta_{0,l}}\phi_m(p^{-2})\phi_{(n-2-2l)/2-m}(p^{-2})}$$
$$+\sum_{l=0}^{(n-2)/2} \widetilde P_{2l+1}^{(1)}(n;d_0,\omega,1,X,Y,t)$$
$$ \times \{ (1-\xi_0p^{-1/2}Y)p^{-n+2+2l}$$
$$ \times \sum_{m=0}^{(n-2-2l)/2}(-1)^m p^{m-m^2}{\prod_{i=1}^{l+m} (1-p^{2i-1}Y^2) \prod_{i=1}^{(n-2l-2m-2)/2}(1-p^{-2i-n-1}Y^2t^4) \over \phi_m(p^{-2})\phi_{(n-2-2l)/2-m}(p^{-2})}$$
$$  +Y^2t^2p^{2l-n+1}(1-\xi_0p^{-1/2}Y)(1-p^{-n}t^2)$$
$$ \times \sum_{m=0}^{(n-4-2l)/2}(-1)^m p^{m-m^2}{\prod_{i=1}^{l+m} (1-p^{2i-1}Y^2) \prod_{i=1}^{(n-2l-2m-4)/2}(1-p^{-2i-n-1}Y^2t^4) \over \phi_m(p^{-2})\phi_{(n-4-2l)/2-m}(p^{-2})} \}.$$
Thus  the assertion (1) follows from Corollary to Lemma 5.2.7. 
 Similarly the assertion (2) can be proved. 

Suppose that $\nu(d_0)>0$ and $\omega=\varepsilon.$  Then by  (2) of Theorem 5.2.5 and (2) of Corollary to Proposition 5.2.6, we have 
 $$\widetilde R_{n-1}(d_0,\omega,X,Y,t)=\sum_{l=0}^{(n-2)/2}  \widetilde P_{2l+1}^{(1)}(n;d_0,\omega,1,X,Y,t)   $$
$$ \times \sum_{m=0}^{(n-2-2l)/2}  ((-1)^m p^{m-m^2})$$
$$ \times {\prod_{i=1}^{l+m} (1-p^{2i-1}Y^2) \prod_{i=1}^{(n-2l-2m-2)/2}(1-p^{-2i-n-1}Y^2t^4) \over \phi_m(p^{-2})\phi_{(n-2-2l)/2-m}(p^{-2})}.$$ 
Thus the  assertion (3) follows from Corollary to Lemma 5.2.7.

\end{proof}

By Proposition 5.2.1 we immediately obtain:

\bigskip

\begin{xcor}
  Let the notation be as in Theorem 5.2.8. \\
{\rm (1)} Suppose  that $\nu(d_0)=0.$ 
 Then   
$$\widetilde R_{n-1}(d_0,\omega,X,Y,t)=(1-p^{-n}t^2)\prod_{i=1}^{n/2 -1}(1-p^{-2n+2i}t^4) $$
$$ \times \left\{ \sum_{l=0}^{(n-2)/2}\prod_{i=1}^{l}(1-p^{-n-2l-3+2i}t^4)  \sum_{d \in {\mathcal U}(n-1,n-1-2l,d_0)} P_{2l}^{(0)}(d_0d,\omega,\chi(d),X,tY^{-1/2})  \right.$$
$$ \times { (p^{2l-1}Y^2)^{(n-2l-2)/2} \prod_{i=0}^{l-1} (1-p^{2i-1}Y^2)  p^{l-1/2}\chi(d)Y(1+\chi(d)Yp^{l-1/2}) \over 2^{1-\delta_{0,l}}(1+\xi_0p^{-1/2}Y)\phi_{(n-2l-2)/2}(p^{-2})}$$
$$+\sum_{l=0}^{(n-2)/2}  \prod_{i=1}^{l}(1-p^{-n-2l-3+2i}t^4) P_{2l+1}^{(1)}(d_0,\omega,1,X,tY^{-1/2}) $$
$$ \times  \left. { (p^{2l-1}Y^2)^{(n-2l-2)/2} \prod_{i=1}^{l} (1-p^{2i-1}Y^2) (1-\xi_0 p^{-1/2}Y) (1+p^{-2l-2}t^2) \over \phi_{(n-2l-2)/2}(p^{-2})} \right\}.$$
\noindent
{\rm (2)} Suppose  that $\nu(d_0) >0$ and  $\omega=\iota.$ 
Put $\xi_0=\chi(d_0).$  Then   
$$\widetilde R_{n-1}(d_0,\omega,X,Y,t)=(1-p^{-n}t^2)\prod_{i=1}^{n/2 -1}(1-p^{-2n+2i}t^4) $$
$$ \times \left\{ \sum_{l=1}^{(n-2)/2}\prod_{i=1}^{l}(1-p^{-n-2l-3+2i}t^4)  \sum_{d \in {\mathcal U}(n-1,n-1-2l,d_0)} P_{2l}^{(0)}(d_0d,\omega,\chi(d),X,tY^{-1/2}) \right.$$
$$ \times { (p^{2l-1}Y^2)^{(n-2l-2)/2} \prod_{i=1}^{l-1} (1-p^{2i-1}Y^2)  p^{l-1/2}\chi(d)Y(1+\chi(d)Yp^{l-1/2}) \over 2\phi_{(n-2l-2)/2}(p^{-2})}$$
$$+\sum_{l=0}^{(n-2)/2}  \prod_{i=1}^{l}(1-p^{-n-2l-3+2i}t^4) P_{2l+1}^{(1)}(d_0,\omega,1,X,tY^{-1/2}) $$
$$ \times  \left. { (p^{2l-1}Y^2)^{(n-2l-2)/2} \prod_{i=1}^{l} (1-p^{2i-1}Y^2) (1+p^{-2l-2}t^2) \over \phi_{(n-2l-2)/2}(p^{-2})} \right\}.$$

\noindent
{\rm (3)} Suppose that $\nu(d_0) >0$ and $\omega=\varepsilon.$ Then   
$$\widetilde R_{n-1}(d_0,\omega,X,Y,t)=(1-\xi_0 p^{-1/2}Y)\prod_{i=1}^{n/2}(1-p^{-2n+2i-2}t^4)$$
$$ \times \sum_{l=0}^{(n-2)/2} P_{2l+1}^{(1)}(d_0,\omega,1,X,tY^{-1/2})   $$
$$ \times {(p^{2l+1}Y^2)^{(n-2l-2)/2} \prod_{i=1}^{l} (1-p^{2i-1}Y^2)\prod_{i=1}^{l}(1-p^{-2l-n+2i-3}t^4) \over \phi_{(n-2-2l)/2}(p^{-2})}.$$

\end{xcor}

\subsection{Explicit formulas of formal power series of Rankin-Selberg type}
 
 \noindent
  { }
  
  \bigskip
  We prove our main result in this section.
  
\begin{thms}    
 Let $d_0 \in {\mathcal F}_p$ and put $\xi_0=\chi(d_0).$

\noindent
  {\rm (1)}  We have
 $$H_{n-1}(d_0,\iota,X,Y,t)$$
 $$=\phi_{(n-2)/2}(p^{-2})^{-1}(p^{-1}t)^{\nu(d_0)}(1-p^{-n}t^2)\prod_{i=1}^{n/2-1} (1-p^{-2n+2i}t^4) $$
 $$ \times { (1+p^{-2} t^2)(1+ p^{-3}\xi_0^2t^2) -p^{-5/2}t^2\xi_0(X+X^{-1}+Y+Y^{-1})\over    (1-p^{-2}XYt^2)(1-p^{-2}XY^{-1}t^2)(1-p^{-2}X^{-1}Yt^2)(1-p^{-2}X^{-1}Y^{-1}t^2)}$$
$$ \times  { 1 \over \prod_{i=1}^{n/2-1} (1-p^{-2i-1}XYt^2)(1-p^{-2i-1}XY^{-1}t^2)(1-p^{-2i-1}X^{-1}Yt^2)(1-p^{-2i-1}X^{-1}Y^{-1}t^2)}.$$ 
 
 \noindent
 {\rm (2)} We have 
 $$H_{n-1}(d_0,\varepsilon,X,Y,t)$$
$$=\phi_{(n-2)/2}(p^{-2})^{-1}(1-p^{-n}t^2)\prod_{i=1}^{n/2-1} (1-p^{-2n+2i}t^4)(tp^{-n/2})^{\nu(d_0)}$$
 $$ \times{ (1+p^{-n} t^2)(1+ p^{-n-1}\xi_0^2t^2) -p^{-1/2-n}t^2\xi_0(X+X^{-1}+Y+Y^{-1})\over    (1-p^{-n}XYt^2)(1-p^{-n}XY^{-1}t^2)(1-p^{-n}X^{-1}Yt^2)(1-p^{-n}X^{-1}Y^{-1}t^2)}$$
 $$ \times {1 \over \prod_{i=1}^{n/2-1} (1-p^{-2i}XYt^2)(1-p^{-2i}XY^{-1}t^2)(1-p^{-2i}X^{-1}Yt^2)(1-p^{-2i}X^{-1}Y^{-1}t^2)}.$$
\end{thms}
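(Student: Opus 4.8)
The plan is to assemble the explicit formula for $H_{n-1}(d_0,\omega,X,Y,t)$ by combining the chain of identities established earlier in Section 5, specializing all the ingredients that have already been computed. The starting point is Theorem 5.2.6, which expresses
$$H_{n-1}(d_0,\omega,X,Y,t)={\kappa(d_0,n-1,l,t)\,\widetilde R_{n-1}(d_0,\omega,X,Y,t)\,Y^{\nu(d_0)/2} \over \prod_{j=1}^n(1-p^{j-1-n}XYt^2)(1-p^{j-1-n}X^{-1}Yt^2)}$$
for $\omega=\varepsilon^l$. So the whole problem reduces to inserting an explicit formula for $\widetilde R_{n-1}(d_0,\omega,X,Y,t)$ and simplifying. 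For this I would use Corollary to Theorem 5.3.10, which in case (1) ($\nu(d_0)=0$ or $\omega=\iota$) writes $\widetilde R_{n-1}$ as a sum over $l$ of the power series $P_{2l}^{(0)}(d_0d,\omega,\chi(d),X,tY^{-1/2})$ and $P_{2l+1}^{(1)}(d_0,\omega,1,X,tY^{-1/2})$, weighted by the explicit polynomials $T_{2l}(d_0,d,Y)$ and $T_{2l+1}(d_0,Y,t)$, and in case (2) ($\nu(d_0)>0$, $\omega=\varepsilon$) as a shorter sum involving only $P_{2l+1}^{(1)}$.

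Next I would substitute the closed forms for the Koecher-Maass type series from Theorem 5.4.5: namely the expressions for $P_m^{(0)}(d_0,\omega,\xi,X,t)$ and $P_{m-1}^{(1)}(d_0,\omega,\xi,X,t)$ (with $\xi=\chi(d)$ for the degree-$2l$ pieces and $\xi=1$ for the degree-$(2l+1)$ pieces). Each of these is a rational function in $t$ with denominator a product of factors of the form $(1-t^2 p^{\ast}X^{\pm1})$, and the numerators are linear in $t^2$ (up to the power $t^{\nu(d_0)}$). After this substitution one has a finite sum, indexed by $l$ and by $d\in{\mathcal U}(n-1,n-1-2l,d_0)$, of explicit rational functions in $X^{\pm1},Y^{\pm1},t$; the sums over $d$ collapse using the values $\chi(d)$ and the $T$-polynomials (recall $T_{n-2}$ and $T_{n-1}$ are given explicitly and the intermediate $T_{2l}$, $T_{2l+1}$ are polynomials whose exact shape I would need to extract from the proof of Theorem 5.3.10). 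The key combinatorial identities that make the telescoping work are exactly the four summation formulas in Corollary to Lemma 5.1.6, which were already used to prove Theorem 5.3.10; a second application of the same type of identity (or a direct partial-fraction comparison) is what reduces the sum over $l$ to the compact product shown in the statement.

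The main obstacle I expect is the bookkeeping of the $p=2$ correction factors: the $\delta_{2,p}$-exponents, the powers of $2$ collected from $\kappa(d_0,n-1,l,t)$, from the $2$-adic local densities (Lemmas 5.1.4, 5.1.5), and from the Hilbert symbols $((-1)^{n/2},(-1)^{n/2}d_0)_p$ and the sign $(-1)^{n(n-2)/8}$ that appear in part (2). Tracking these uniformly in $n$, $d_0$ and $l$, and checking that the spurious factors $Y^{\delta_{2,p}(n-2)/2}$ cancel (as noted at the end of the proof of Theorem 5.2.6 via $\kappa(d_0,n-1,l,tY^{-1/2})Y^{\delta_{2,p}(n-2)/2}=\kappa(d_0,n-1,l,t)$), is where the bulk of the care goes. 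A convenient strategy to finish is to verify the claimed formula by checking that both sides are rational functions in $t$ of the same (bounded) degree with the same denominator — using the functional equation $\widetilde F_p^{(j)}(T,X^{-1})=\widetilde F_p^{(j)}(T,X)$ to force the symmetry $H_{n-1}(d_0,\omega,X^{-1},Y,t)=H_{n-1}(d_0,\omega,X,Y,t)$ and similarly in $Y$ — so that it suffices to match numerators at a few values of $t$, reducing the final step to a short polynomial identification rather than a full expansion.
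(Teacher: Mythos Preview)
Your proposal is correct and follows essentially the same route as the paper. The paper proceeds exactly by inserting Theorem 5.4.5 into Corollary to Theorem 5.3.10, combining with Theorem 5.2.6, using the invariance under $Y\mapsto Y^{-1}$ to pin down the reduced denominator, and then determining the remaining low-degree numerator by evaluating at the special points $t=p^{(n-1)/2}X^{\pm 1/2}Y^{1/2}$; one practical simplification the paper exploits is that only the top terms $l=(n-2)/2$ need to be written out explicitly (giving equation (D)), since all lower-$l$ contributions are visibly divisible by $(1-p^{-n+1}XY^{-1}t^2)(1-p^{-n+1}X^{-1}Y^{-1}t^2)$ and hence vanish at those evaluation points.
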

 
\begin{proof} First suppose  that  $\omega=\iota.$ For an integer $l$ put
$$V(l,X,Y,t)$$
$$=(1-t^2p^{-2}XY^{-1})(1-t^2p^{-2}X^{-1}Y^{-1}) \prod_{i=1}^{l}(1-t^2p^{-2i-1}XY^{-1})(1-t^2p^{-2i-1}X^{-1}Y^{-1}).$$
For $d \in {\mathcal U},$ put $\eta_d=\chi(d).$ Then  by  Theorem 5.2.2, and (1) of Corollary to Theorem 5.2.8, we have
$$\widetilde R_{n-1}(d_0,\omega,X,Y,t)=(1-p^{-n}t^2) \prod_{i=1}^{(n-2)/2}(1-p^{-2n+2i}t^4) \left\{ { (p^{-1}Y^2)^{(n-1)/2} \xi_0  \over \phi_{(n-2)/2}(p^{-2})} \right.$$
$$+{(p^{-1}Y^2)^{(n-2)/2}(1-p^{-1/2}\xi_0 Y)   (1+p^{-2}t^2)(p^{-1}tY^{-1/2})^{\nu(d_0)} (1-p^{-5/2} \xi_0 t^2 Y^{-1}) \over (1-t^2p^{-2}XY^{-1}) (1-t^2p^{-2}X^{-1}Y^{-1})}$$
$$ + \sum_{l=1}^{(n-2)/2}\sum_{d \in {\mathcal U}(n-1,n-1-2l,d_0)} {\prod_{i=1}^{l}(1-p^{-n-2l-3+2i}t^4) (p^{-1}tY^{-1/2})^{\nu(d_0)}S_{2l}^{(0)}(d_0d,\iota,\eta_d,X,Y,t^2) \over V(l,X,Y,t) }$$
$$\left. +\sum_{l=1}^{(n-2)/2}  {\prod_{i=2}^{l}(1-p^{-n-2l-3+2i}t^4) (p^{-1}tY^{-1/2})^{\nu(d_0)}S_{2l+1}^{(1)}(d_0,\iota,X,Y,t^2) \over V(l-1,X,Y,t) } \right\},$$
where $S_{2r}^{(0)}(d_0 d, \iota, \eta_d,X,Y,t)$ and $S_{2r+1}^{(1)}(d_0,\iota,X,Y,t)$ are  polynomials in $t$ of degree at most $2$. We note that $\xi_0=0$ if $\nu(d_0)>0.$ Hence  $\widetilde R_{n-1}(d_0,\iota,X,Y,t)$ can be expressed as
$$\widetilde R_{n-1}(d_0,\iota,X,Y,t)$$
$$={(1-p^{-n}t^2) \prod_{i=1}^{(n-2)/2}(1-p^{-n-2i}t^4) (p^{-1}tY^{-1/2})^{\nu(d_0)}S(d_0,\iota, X,Y,t^2) \over \phi_{(n-2)/2}(p^{-2}) V((n-2)/2,X,Y,t)},$$
where $S(d_0, \iota,X,Y,t)$ is a polynomial in  $t$ of degree at most $n.$ Moreover it can be expressed as  
\\
${\rm (D)} \qquad S(d_0, \iota,X,Y,t^2) $
$$=\{{ (p^{-1}Y^2)^{(n-2)/2} \prod_{i=1}^{(n-2)/2} (1-t^2p^{-2i-1}XY^{-1}) (1-t^2p^{-2i-1}X^{-1}Y^{-1}) \over \phi_{(n-2)/2}(p^{-2})}$$
$$ \times \xi_0 p^{-1/2}Y  (1-t^2p^{-2}XY^{-1}) (1-t^2p^{-2}X^{-1}Y^{-1})$$
$$+{ (p^{-1}Y^2)^{(n-2)/2} \prod_{i=1}^{(n-2)/2} (1-t^2p^{-2i-1}XY^{-1}) (1-t^2p^{-2i-1}X^{-1}Y^{-1}) \over \phi_{(n-2)/2}(p^{-2})}$$
$$\times (1-p^{-1/2}\xi_0 Y)   (1+p^{-2}t^2) (1-p^{-5/2} \xi_0 t^2 Y^{-1}) \}$$
$$+(1-p^{-n-3}t^4)U(d_0,X,Y,\iota,t^2)$$
with $U(d_0,\iota,X,Y,t)$ a polynomial in  $t.$ 
Hence  by Theorem 5.1.4 we have 
$$H_{n-1}(d_0,\iota,X,Y,t)=(p^{-1}t)^{\nu(d_0)}(1-p^{-n}t^2)\prod_{i=1}^{n/2-1} (1-p^{-2n+2i}t^4) $$
$$ \times {S(d_0,\iota,X,Y,t^2) \over (1-p^{-2}XYt^2)(1-p^{-2}XY^{-1}t^2)(1-p^{-2}X^{-1}Yt^2)(1-p^{-2}X^{-1}Y^{-1}t^2) }$$
$$ \times {1 \over \prod_{i=1}^{n/2-1} (1-p^{-2i-1}XYt^2)(1-p^{-2i-1}XY^{-1}t^2)(1-p^{-2i-1}X^{-1}Yt^2)(1-p^{-2i-1}X^{-1}Y^{-1}t^2)} $$
$$ \times {1 \over \prod_{i=1}^{(n-2)/2}(1-p^{-2i}XYt^2)(1-p^{-2i}X^{-1}Yt^2)}.$$ 
Hence the power series $H_{n-1}(d_0,\iota,X,Y,t)$ is a rational function in $X,Y$ and $t$, and is invariant under the transformation $Y \mapsto Y^{-1}.$  This implies that the reduced denominator of the rational function  $H_{n-1}(d_0,\iota,X,Y,t)$  in $t$  is at most 
 $$(1-p^{-2}XYt^2)(1-p^{-2}XY^{-1}t^2)(1-p^{-2}X^{-1}Yt^2)(1-p^{-2}X^{-1}Y^{-1}t^2)$$
 $$ \times \prod_{i=1}^{n/2-1} (1-p^{-2i-1}XYt^2)(1-p^{-2i-1}XY^{-1}t^2)(1-p^{-2i-1}X^{-1}Yt^2)(1-p^{-2i-1}X^{-1}Y^{-1}t^2)$$
 and therefore we have
$${\rm (E)} \qquad   S(d_0,\iota,X,Y,t^2)=\sum_{i=0}^2 a_i(d_0,X,Y)t^{2i}{\prod_{i=1}^{(n-2)/2}(1-p^{-2i}XYt^2)(1-p^{-2i}X^{-1}Yt^2) \over \phi_{(n-2)/2}(p^{-2})}, $$
  where $a_i(d_0,X,Y) \ (i=0,1,2)$ is a  polynomial in $X+X^{-1}$ and $Y+Y^{-1}$.  First assume  $\nu(d_0)=0.$ Then we can easily see   $a_0(d_0,X,Y)=1.$ Then by substituting $\pm p^{(n+3)/4}$ for $t$ in  (D) and (E), and comparing them, we obtain
  $$1 \pm  a_1(d_0,X,Y)p^{(n+3)/2} +a_2(d_0,X,Y)p^{n+3} $$
  $$=1 \pm( p^{(n-3)/2}+p^{(n-1)/2} -\xi_0p^{n/2-1}(X+X^{-1}+Y+Y^{-1})) +p^{n-2}.$$
   Hence   
  $a_1(d_0,X,Y)=p^{-2}+p^{-3}-p^{-5/2}(X+X^{-1}+Y+Y^{-1})\xi_0$ and $a_2(d_0,X,Y)=p^{-5}.$ This proves the assertion in case $\nu(d_0)=0.$ Next assume $\nu(d_0) >0.$ Then in the same manner as above we have $a_0(d_0,X,Y)=1,$ and 
  $$1  \pm a_1(d_0,X,Y)p^{(n+3)/2}+a_2(d_0,X,Y)p^{n+3}= 1 \pm p^{(n-1)/2}$$
 Hence   $a_2(d_0,X,Y)=0$ and $a_1(d_0,X,Y)=p^{-2}.$  This proves the assertion in case $\nu(d_0)>0.$ 
  
Similarly the assertion for $\nu(d_0)=0$ and $\omega=\varepsilon$ can be proved. 
     Next suppose that $\nu(d_0) >0$ and $\omega=\varepsilon.$ 
Then the assertion can be proved similarly by using   Theorems 5.1.4 and 5.2.2, and (2) of Corollary to Theorem 5.2.8.
\end{proof}
 
\section{Proof of Theorem 3.2}
   Now we give an explicit form of $R(s,\sigma_{n-1}(\phi_{I_n(h),1}))$ for the first Fourier-Jacobi coefficient $\phi_{I_n(h),1}$ of the Duke-Imamo{\=g}lu-Ikeda lift. 

\begin{prop}
 Let $k$ and $n$ be positive even integers. Given a Hecke eigenform $ h \in {\textfrak S}_{k-n/2+1/2}^+(\varGamma_0(4)),$  let $f \in {\textfrak S}_{2k-n}(\varGamma^{(1)})$ be the primitive form as in Section 2. Then
\begin{eqnarray*}
\lefteqn{R(s, h)=L(2s-2k+n+1,f,{\rm Ad})
\sum_{d_0 \in {\mathcal F}^{(-1)^{n/2}}} |c_h(|d_0|)|^2 |d_0|^{-s} } \\
&& \hspace*{-5mm}\times \prod_{p}\{(1+p^{-2s+2k-n-1})(1+p^{-2s+2k-n-2}\chi_{p}(d_0)^2)-2p^{-2s+2k-n-3/2}\chi_{p}(d_0)c_f(p)\}. \end{eqnarray*}
\end{prop}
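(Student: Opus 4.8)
The plan is to express $R(s,h)$ as a Dirichlet series over the Fourier coefficients $c(m)$ of $h$, split the sum according to the factorization $m=|d_0|N^2$ of $m$ into a fundamental discriminant times a square, invoke the classical relation (Shimura, Kohnen, Kohnen--Zagier) between the plus--space coefficients $c(|d_0|N^2)$ and the Hecke eigenvalues of the Shimura lift $f$, and then finish with an elementary Euler--product manipulation.

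First I would recall that, by the definition of $R(\,\cdot\,,F)$ applied in degree $1$,
$$R(s,h)=\sum_{m\ge 1}\frac{|c(m)|^2}{m^{s}},$$
where $c(m)=0$ unless $(-1)^{n/2}m\equiv 0,1\ \mathrm{mod}\ 4$. Every such $m$ has a unique factorization $m=|d_0|N^2$ with $d_0$ a fundamental discriminant satisfying $(-1)^{n/2}d_0>0$, i.e.\ $d_0\in{\mathcal F}^{(-1)^{n/2}}$, and $N\ge 1$; indeed $(-1)^{n/2}m=d_0N^2$ recovers $d_0$ as the discriminant of ${\bf Q}(\sqrt{(-1)^{n/2}m})$. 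Hence
$$R(s,h)=\sum_{d_0\in{\mathcal F}^{(-1)^{n/2}}}|d_0|^{-s}\sum_{N\ge 1}\frac{|c(|d_0|N^2)|^2}{N^{2s}}.$$
Since $h$ is a Hecke eigenform in ${\textfrak S}_{k-n/2+1/2}^{+}(\varGamma_0(4))$ corresponding to $f$ under the Shimura correspondence, Kohnen's theory of the plus space together with the Kohnen--Zagier coefficient formula (cf.\ \cite{Ko}, \cite{K-Z}) shows that $\rho_p(a):=c(|d_0|p^{2a})/c(|d_0|)$ is a real number, multiplicative in the sense that $c(|d_0|N^2)/c(|d_0|)=\prod_p\rho_p(\nu_p(N))$, with local generating series
$$\sum_{a\ge 0}\rho_p(a)\,t^a=\frac{1-\chi_p(d_0)\,p^{k-n/2-1}\,t}{(1-p^{k-n/2-1/2}\alpha_p\,t)(1-p^{k-n/2-1/2}\alpha_p^{-1}t)},$$
where $\alpha_p$ is the Satake $p$--parameter of $f$. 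Consequently $\sum_{N\ge1}|c(|d_0|N^2)|^2N^{-2s}=|c(|d_0|)|^2\prod_p\bigl(\sum_{a\ge0}\rho_p(a)^2p^{-2as}\bigr)$.

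The only real computation then needed is the formal identity: if $\sum_{a\ge0}\rho(a)t^a=(1-ct)\{(1-\mu t)(1-\nu t)\}^{-1}$, then
$$\sum_{a\ge 0}\rho(a)^2x^{a}=\frac{(1+\mu\nu x)(1+c^2x)-2c(\mu+\nu)x}{(1-\mu^2x)(1-\nu^2x)(1-\mu\nu x)}.$$
This follows by partial fractions: writing $\rho(a)=A\mu^a+B\nu^a$ with $A+B=1$ and $A\nu+B\mu=c$, one sums the three geometric series $A^2/(1-\mu^2x)+2AB/(1-\mu\nu x)+B^2/(1-\nu^2x)$ and checks that the numerator of the result equals $1+(\mu\nu+c^2-2c(\mu+\nu))x+\mu\nu c^2x^2=(1+\mu\nu x)(1+c^2x)-2c(\mu+\nu)x$ (the degenerate case $\mu=\nu$ following by continuity). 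I would apply this with $x=p^{-2s}$, $\mu=p^{k-n/2-1/2}\alpha_p$, $\nu=p^{k-n/2-1/2}\alpha_p^{-1}$, $c=\chi_p(d_0)p^{k-n/2-1}$, so that $\mu^2x=\alpha_p^2p^{-2s+2k-n-1}$, $\nu^2x=\alpha_p^{-2}p^{-2s+2k-n-1}$, $\mu\nu x=p^{-2s+2k-n-1}$; the product over all $p$ of the denominators is $L(2s-2k+n+1,f,{\rm Ad})^{-1}$ by the very definition of $L(s,f,{\rm Ad})$, while the numerators, rewritten by means of $\alpha_p+\alpha_p^{-1}=p^{-k+n/2+1/2}a(p)$, give the asserted Euler factors. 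Assembling these ingredients yields the stated formula for $R(s,h)$.

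The step I expect to require the most care is pinning down the exact normalizations in the Shimura--Kohnen--Zagier coefficient relation — in particular the precise power of $p$ in the numerator of the local generating series — and the treatment of the primes $p=2$ and $p\mid d_0$, where $\chi_p(d_0)=0$ and the generating series degenerates to $\{(1-p^{k-n/2-1/2}\alpha_p t)(1-p^{k-n/2-1/2}\alpha_p^{-1}t)\}^{-1}$, consistently with setting $c=0$ above. Everything beyond that is routine bookkeeping with geometric series.
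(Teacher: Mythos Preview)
Your approach is correct and is genuinely different from the paper's. The paper does not use the Shimura--Kohnen coefficient relation directly; instead it observes that $R(s,h)$ is the degree-one instance of the general Rankin--Selberg expression in Theorem~4.2, namely
\[
R(s,h)=\tfrac12\sum_{d_0}|c(|d_0|)|^2|d_0|^{n/2-k+1/2}\Bigl\{\prod_pH_{1,p}(d_0,\iota,\alpha_p,\alpha_p,p^{-s+k-n/2+1/2})+\prod_pH_{1,p}(d_0,\varepsilon,\alpha_p,\alpha_p,p^{-s+k-n/2+1/2})\Bigr\},
\]
and then evaluates the local factors $H_{1,p}$ by specializing Theorem~5.5.1 (the explicit formula for $H_{n-1,p}$) to $n=2$. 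Thus the paper's proof is really a consistency check of its own machinery, while yours is the classical direct computation. Your route is shorter and requires only standard half-integral-weight input; the paper's route buys uniformity with the higher-degree case and avoids having to quote the Shimura--Kohnen relation separately, at the cost of invoking two heavy theorems proved elsewhere in the paper.

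Your rational-function identity for $\sum_a\rho(a)^2x^a$ is correct, and your substitutions $\mu\nu x=p^{-2s+2k-n-1}$, $c^2x=\chi_p(d_0)^2p^{-2s+2k-n-2}$ are right. Note that $\mu+\nu=a(p)$ in the classical normalization, so the cross term you obtain is $2\chi_p(d_0)p^{-2s+k-n/2-1}a(p)$; this agrees with the paper's $2\chi_p(d_0)p^{-2s+2k-n-3/2}(\alpha_p+\alpha_p^{-1})$ once one unwinds $a(p)=p^{k-n/2-1/2}(\alpha_p+\alpha_p^{-1})$. Your caution about $p=2$ is well placed: Kohnen's plus-space theory supplies the required Hecke relation there, so the local generating series has the same shape with $\chi_2(d_0)$ possibly zero, and your degeneration argument handles that case.
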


\begin{proof} For any prime number $p$ we have
$$\sum_{r=0}^{\infty} c_h(|d_0|p^{2r})p^{-2rs}=c_h(|d_0|){ 1- p^{-2s+k-n/2-1}\left({d_0 \over p}\right) \over   (1-p^{k-n/2+1-2s}\alpha_p) (1-p^{k-n/2+1-2s}\alpha_p^{-1})}.$$ 
Then by using the same method as in the proof of [\cite{Sh1}, Lemma1] we can show that
$$\sum_{r=0}^{\infty} c_h(|d_0|p^{2r})^2p^{-2rs}=c_h(|d_0|)^2$$
$$ \times {(1+p^{-2s+2k-n-1})(1+p^{-2s+2k-n-2}\chi_{p}(d_0)^2)-2p^{-2s+2k-n-3/2}\chi_{p}(d_0)a(p) \over
 (1-p^{-2s+2k-n-1}\alpha_p^2)(1-p^{-2s+2k-n-1}\alpha_p^{-2})(1-p^{-2s+2k-n-1})}$$
This proves the assertion.
\end{proof}
\noindent
{\bf Remark.} If $n \equiv 2 \ {\rm mod} \ 4,$ this can also be proved by Theorems 4.2 and 5.3.1.

\begin{thm}
Let $k$ and $n$ be positive even integers. Given a Hecke eigenform $h \in {\textfrak S}_{k-n/2+1/2}^+(\varGamma_0(4)),$ let $ f \in {\textfrak S}_{2k-n}(\varGamma^{(1)})$ and $\phi_{I_n(h),1} \in {J_{k,\, 1}^{\, {\rm cusp}}}(\varGamma^{(n-1),J}) $ be as in Section 2 and Section 3, respectively. Put $\lambda_n={e_{n-1} \over 2}\prod_{i=1}^{n/2-1} \widetilde {\xi}(2i).$ 
Then, we  have
$$R(s,\sigma_{n-1}(\phi_{I_n(h),1}))=\lambda_n2^{(-s-1/2)(n-2)}\zeta(2s+n-2k+1)^{-1}\prod_{i=1}^{{n-2 \over 2}}\zeta(4s+2n-4k+2-2i)^{-1}$$
$$\times \{R(s-n/2+1, h) \zeta(2s-2k+3)\prod_{i=1}^{{n-2 \over 2}}L(2s-2k+2i+2,f,{\rm Ad})\zeta(2s-2k+2i+2)$$
$$+ (-1)^{n(n-2)/8}R(s, h) \zeta(2s-2k+n+1)\prod_{i=1}^{{n-2 \over 2}}L(2s-2k+2i+1,f,{\rm Ad})\zeta(2s-2k+2i+1) \}.$$
\end{thm}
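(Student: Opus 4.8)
The plan is to combine the Euler-product factorization in Theorem 4.2 with the explicit local computation in Theorem 5.5.1 and then reorganize the resulting product over primes into global $L$-functions. Starting from Theorem 4.2, I would write
$$R(s,\sigma_{n-1}(\phi_{I_n(h),1}))={e_{n-1}\over 2}\kappa_{n-1}2^{-(k-n/2-1/2)(n-2)}\sum_{d_0\in\mathcal F^{((-1)^{n/2})}}|c(|d_0|)|^2|d_0|^{n/2-k+1/2}\bigl\{\textstyle\prod_pH_{n-1,p}(d_0,\iota_p,\alpha_p,\alpha_p,p^{-s+k-1/2})+\prod_pH_{n-1,p}(d_0,\varepsilon_p,\alpha_p,\alpha_p,p^{-s+k-1/2})\bigr\}.$$
Into this I substitute the two explicit formulas of Theorem 5.5.1 for $H_{n-1,p}(d_0,\iota,X,Y,t)$ and $H_{n-1,p}(d_0,\varepsilon,X,Y,t)$ with $X=Y=\alpha_p$ and $t=p^{-s+k-1/2}$, using $\alpha_p+\alpha_p^{-1}=p^{-k+n/2+1/2}a(p)$ and $\alpha_p\alpha_p^{-1}=1$. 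The key point is that with $X=Y$ the four ``quartic'' factors in the denominators of Theorem 5.5.1 collapse: $(1-p^{-2i-1}XYt^2)(1-p^{-2i-1}X^{-1}Y^{-1}t^2)$ contributes a factor depending on $\alpha_p^2+\alpha_p^{-2}$, while the cross terms $(1-p^{-2i-1}XY^{-1}t^2)(1-p^{-2i-1}X^{-1}Yt^2)=(1-p^{-2i-1}t^2)^2$ become adjoint-$L$ and zeta-type local factors. Tracking all exponents, one finds that the prime-by-prime product of the $\iota$-part reassembles into
$$\zeta(2s+n-2k+1)^{-1}\prod_{i=1}^{(n-2)/2}\zeta(4s+2n-4k+2-2i)^{-1}\cdot\zeta(2s-2k+3)\prod_{i=1}^{(n-2)/2}L(2s-2k+2i+2,f,\mathrm{Ad})\zeta(2s-2k+2i+2)$$
times the Dirichlet-series-over-$d_0$ factor that, by Proposition 6.1, is precisely $R(s-n/2+1,h)$ (up to the power of $2$ and $|d_0|$-shift absorbed into $\lambda_n$ and the $2^{(-s-1/2)(n-2)}$ prefactor); similarly the $\varepsilon$-part reassembles with the shift $i\mapsto i$ in the adjoint/zeta arguments reduced by one and contributes $R(s,h)$, carrying the sign $(-1)^{n(n-2)/8}$ coming from the $\delta_{2,p}$-factor $((-1)^{n(n-2)/8}\cdots)^{\delta_{2,p}}$ in part (2) of Theorem 5.5.1.

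Concretely, I would carry out the steps in this order. First, verify the combinatorial identity $\prod_p H_{n-1,p}(d_0,\iota_p,\alpha_p,\alpha_p,t)=|d_0|^{\ast}\cdot(\text{power of }2)\cdot\bigl(\text{product of }\zeta\text{ and }L(\cdot,f,\mathrm{Ad})\text{ factors}\bigr)\cdot\bigl(\text{the }d_0\text{-Euler factor of Proposition 6.1}\bigr)$, by matching the Euler factor at each $p$: one writes $1-p^{-2i-1}t^2(\alpha_p+\alpha_p^{-1})+p^{-4i-2}t^4$ as $(1-\alpha_pp^{-2i-1}t^2)(1-\alpha_p^{-1}p^{-2i-1}t^2)$ and regroups the $2(n/2-1)+2$ linear factors over all $p$ into the stated global functions, being careful that the numerator of Theorem 5.5.1(1), namely $(1+p^{-2}t^2)(1+p^{-3}\xi_0^2t^2)-p^{-5/2}t^2\xi_0(X+X^{-1}+Y+Y^{-1})$, equals (after $X=Y=\alpha_p$, $t\mapsto p^{-s+k-1/2}$) exactly the local factor appearing in Proposition 6.1 for $R(s-n/2+1,h)$. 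Second, repeat the same bookkeeping for the $\varepsilon$-part using Theorem 5.5.1(2), noting that its numerator with $X=Y=\alpha_p$ matches the Proposition 6.1 local factor with the unshifted variable $s$, and that the extra factors $\prod_{i=1}^{n/2-1}(1-p^{-2n+2i}t^4)$ and the $(tp^{-n/2})^{\nu(d_0)}$ versus $(p^{-1}t)^{\nu(d_0)}$ discrepancy get absorbed into the overall $2$-power and the $|d_0|$-normalization. Third, collect the scalar prefactors: $\kappa_{n-1}=2^{2-n}\prod_{i=1}^{(n-2)/2}\Gamma_{\mathbf C}(2i)$ and $\widetilde\xi(2i)=\Gamma_{\mathbf C}(2i)\zeta(2i)$, so that ${e_{n-1}\over2}\kappa_{n-1}$ together with the $\zeta(2i)$ factors liberated from the $\phi_{(n-2)/2}(p^{-2})^{-1}$ Euler products produces exactly $\lambda_n={e_{n-1}\over2}\prod_{i=1}^{n/2-1}\widetilde\xi(2i)$, and the remaining powers of $2$ consolidate into $2^{(-s-1/2)(n-2)}$. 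Fourth, identify the signs: the only source of a global sign is the factor $((-1)^{n(n-2)/8}\cdots)^{\delta_{2,p}}$ in the $p=2$ case of Theorem 5.5.1(2), which over all primes contributes a single $(-1)^{n(n-2)/8}$ attached to the $R(s,h)$-term.

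The main obstacle will be step one and two: the careful tracking of exponents of $p$ in the denominators of Theorem 5.5.1 to show that, after setting $X=Y=\alpha_p$, the $n/2$ cross-factors $(1-p^{-2i-1}t^2)^2$ (respectively $(1-p^{-2i}t^2)^2$ in the $\varepsilon$-case) and the $n/2$ genuine factors $(1-\alpha_p^2p^{-\cdots}t^2)(1-\alpha_p^{-2}p^{-\cdots}t^2)(1-p^{-\cdots}t^2)$ line up, prime by prime, with the shifted arguments $2s-2k+2i+2$ (resp.\ $2s-2k+2i+1$) of the global $\zeta$ and $L(\cdot,f,\mathrm{Ad})$, while the factors introduced by the correction terms $\prod_{i=1}^{n/2-1}(1-p^{-2n+2i}t^4)=\prod_{i=1}^{(n-2)/2}(1-p^{-2n+2i}t^2)(1+p^{-2n+2i}t^2)$ and $\phi_{(n-2)/2}(p^{-2})^{-1}=\prod_{i=1}^{(n-2)/2}(1-p^{-2i})^{-1}$ must be shown to contribute precisely the denominators $\zeta(2s+n-2k+1)^{-1}\prod\zeta(4s+2n-4k+2-2i)^{-1}$ and the numerators $\zeta(2i)$ that build $\lambda_n$. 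This is purely a matching of Euler factors, but it is long and requires keeping the half-integer shifts ($t^2=p^{-2s+2k-1}$) straight; once it is done, substituting Proposition 6.1 to recognize the $d_0$-sum as $R(s-n/2+1,h)$ and $R(s,h)$ is immediate, and the theorem follows.
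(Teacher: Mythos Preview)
Your proposal is correct and follows exactly the paper's approach: the paper's own proof is the single sentence ``The assertion follows directly from Theorems 4.2 and 5.5.1, and Proposition 6.1,'' and you have spelled out precisely this derivation --- substituting Theorem 5.5.1 with $X=Y=\alpha_p$, $t=p^{-s+k-1/2}$ into the Euler product of Theorem 4.2, recognizing the numerators as the local factors of Proposition 6.1 (shifted by $n/2-1$ for the $\iota$-part, unshifted for the $\varepsilon$-part), and regrouping the denominators into the stated global $\zeta$- and adjoint $L$-factors. One small slip: your factorization $1-p^{-2n+2i}t^4=(1-p^{-2n+2i}t^2)(1+p^{-2n+2i}t^2)$ should read $(1-p^{-n+i}t^2)(1+p^{-n+i}t^2)$, but this does not affect the argument.
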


\begin{proof} The assertion follows directly from Theorems 4.2 and 5.3.1, and Proposition 6.1.
\end{proof}

\begin{proof} [{\bf Proof of Theorem 3.2.}]  The assertion trivially holds if $n=2.$ Suppose that $n \ge 4.$ By Theorem 6.2 we have\\
${\rm (F)} \qquad \displaystyle {\mathcal R}(s,\sigma_{n-1}(\phi_{I_n(h),1}))=\prod_{i=1}^{n/2-1}\widetilde \xi(2i) 2^{(-s-1/2)(n-2)}{\mathcal T}(s)$
$$\times \left\{{\mathcal U}(s)^{-1}{\mathcal R}(s-n/2+1, h)\prod_{i=1}^{{n-2 \over 2}}\widetilde \Lambda(2s-2k+2i+2,f,{\rm Ad}) \xi(2s-2k+2i+2)\right.$$
$$\left.+(-1)^{n(n-2)/8}R(s, h)\prod_{i=1}^{{n-2 \over 2}}L(2s-2k+2i+1,f,{\rm Ad})\zeta(2s-2k+2i+1)\right\},$$
where 
$${\mathcal T}(s)=\Gamma_{\bf R}(2s+n-2k+1)\prod_{i=1}^{(n-2)/2}\Gamma_{\bf R}(4s+2n-4k+2-2i) \prod_{i=1}^{n-1}\Gamma_{\bf R}(2s-i+1),$$
and
$${\mathcal U}(s)=\Gamma_{\bf R}(2s-2k+3)\Gamma_{\bf R}(2s-n+2)$$
$$ \times \prod_{i=1}^{(n-2)/2}(\Gamma_{\bf C}(2s-2k+2i+2)\Gamma_{\bf C}(2s-n+2i+1)\Gamma_{\bf R}(2s-2k+2i+2)).$$
We note that ${\mathcal R}(s, h)$  is holomorphic at $s=k-1/2.$ Thus by taking the residue of the both-sides of (F) at $s=k-1/2$ , we get  
$${\rm Res}_{s=k-1/2}{\mathcal R}(s,\sigma_{n-1}(\phi_{I_n(h),1}))=2^{-k(n-2)}\prod_{i=1}^{n/2-1}\widetilde \xi(2i) {{\mathcal T}(k-1/2) \over {\mathcal U}(k-1/2)}$$
$$ \times {\rm Res}_{s=k-n/2+1/2} {\mathcal R}(s, h) \prod_{i=1}^{{n-2 \over 2}} \widetilde \Lambda(2i+1,f,{\rm Ad})\xi(2i+1).$$
We easily see that 
$${{\mathcal T}(k-1/2) \over {\mathcal U}(k-1/2)}=2^{(n-1)(n-2)/2}.$$ 
By Theorem 1 in \cite{K-Z}, 
we have
$${\rm Res}_{s=k-n/2+1/2}{\mathcal R}(s, h)=2^{2k-n}\langle  h, h \rangle .$$
Thus we complete the proof.
  \end{proof}

\bigskip

\noindent
\author{Hidenori KATSURADA \\[1mm]
Muroran Institute of Technology \\
 27-1 Mizumoto, Muroran, 050-8585, Japan \\
E-mail: \verb+hidenori@mmm.muroran-it.ac.jp+ \\[2.5mm]
 and \\[2.5mm]
\noindent
 Hisa-aki KAWAMURA \\[1mm] 
 Department of Mathematics, Graduate School of Science, Hiroshima University \\
 1-3-1 Kagamiyama, Higashi-Hiroshima, 739-8526 JAPAN \\
E-mail: \verb+hisa@hiroshima-u.ac.jp+}

\end{document}